\title{Relation between the Weyl group orbits of fundamental weights for multiply-laced finite dimensional simple Lie algebras and d-complete posets}
\date{}
\author{Masato Tada \footnote{Graduate School of Pure and Applied Sciences, University of Tsukuba, 1-1-1 Tennodai, Tsukuba, Ibaraki 305-8571, Japan \newline e-mail: t-d-masato@math.tsukuba.ac.jp}}
\begin{document}
%\maketitle

\newcommand{\Lieg}{\mathfrak{g}}
\newcommand{\Lieh}{\mathfrak{h}}
\newcommand{\ULieg}{U(\mathfrak{g})}
\newcommand{\Field}{\mathbb{F}}
\newcommand{\Fcomp}{\mathbb{C}}
\newcommand{\Fqcomp}{\mathbb{C}(q)}
\newcommand{\Adef}{\overset{\text{def}}{\iff}}
\newcommand{\Edef}{\overset{\text{def}}{=}}

\theoremstyle{definition}
\newtheorem{prop}{Proposition}[section]
\newtheorem{axio}[prop]{Axiom}
\newtheorem{defi}[prop]{Definition}
\newtheorem{lemm}[prop]{Lemma}
\newtheorem{theo}[prop]{Theorem}
\newtheorem{coro}[prop]{Corollary}
\newtheorem{rema}[prop]{Remark}
\newtheorem{exam}[prop]{Example}
\newtheorem{theorem}{定理}
\renewcommand{\thetheorem}{\!\!}

\ytableausetup{mathmode,boxsize=2.4em}
\ytableausetup{aligntableaux=center}

\makeatletter
\renewcommand{\theequation}{%
\thesection.\arabic{equation}}
\@addtoreset{equation}{section}
\makeatother

%%%%%%%%%%%%%%%%%%%%%%%%%%%%%%%%%%%%%%%%%%%%%%%%%%%%%%%%%%%%%%%%%%%%%%%%%%%%%%%%%%%%%%%%%%%%%%%%%%%%
%%%%%%%%%%%%%%%%%%%%%%%%%%%%%%%%%%%%%%%%%%%%%%%%%%%%%%%%%%%%%%%%%%%%%%%%%%%%%%%%%%%%%%%%%%%%%%%%%%%%
\maketitle

\begin{abstract}
It is known that there exists an order isomorphism between the Weyl group orbit through a minuscule weight of a simply-laced finite-dimensional simple Lie algebra and the set of all order filters in a self-dual connected d-complete poset. 
In this paper, we try to extend this fact to the case of multiply-laced finite-dimensional simple Lie algebras by using the ``folding'' technique with respect to a Dynkin diagram automorphism.
\end{abstract} 

%%%%%%%%%%%%%%%%%%%%%%%%%%%%%%%%%%%%%%%%%%%%%%%%%%%%%%%%%%%%%%%%%%%%%%%%%%%%%%%%%%%%%%%%%%%%%%%%%%%%
%%%%%%%%%%%%%%%%%%%%%%%%%%%%%%%%%%%%%%%%%%%%%%%%%%%%%%%%%%%%%%%%%%%%%%%%%%%%%%%%%%%%%%%%%%%%%%%%%%%%

\section{Introduction.}\label{Sec:intro}

A d-complete poset, introduced by Robert A. Proctor (\cite{Proc1,Proc2}), is a finite poset that satisfies some local conditions described in terms of double-tailed diamonds (see Section \ref{Sec:d-comp}).
A d-complete poset is one of the extensions of Young diagrams or shifted Young diagrams; in fact, a d-complete poset has an extension of some Young diagram's properties such as the hook length property (\cite{Proc3}) and the jeu de taquin property (\cite{Proc4}).
So, it is expected that d-complete posets will be used in combinatorial representation theory as well as Young diagrams and shifted Young diagrams are used.

Now, we recall the fundamental relation between d-complete posets and finite-dimensional simple Lie algebras (see Section \ref{Sec:Lie-d-comp}).
Let $\Lieg$ be a simply-laced finite-dimensional simple Lie algebra, with $W=\langle s_i\mid i\in I\rangle$ the Weyl group.
Let $\lambda$ be a dominant integral weight of $\Lieg$, and set $W_\lambda:=\{w\in W\mid w\lambda=\lambda\}$.
We define an order $\le_s$ (resp., $\le_w $) on $W\lambda$ which corresponds to the Bruhat order (resp., weak Bruhat order) under the canonical map $W\lambda \overset{\sim}{\rightarrow} W/W_\lambda \subset W$.
If $\lambda$ is minuscule (in this case, $\le_s$ is identical to $\le_w $), then there exists a connected self-dual d-complete poset $(P_\lambda,\le)$ such that $(W\lambda,\le_s) = (W\lambda,\le_w)$ and $(\mathcal{F}(P_\lambda),\subseteq)$ are isomorphic as posets (\cite[Section 14]{Proc1}), where $\mathcal{F}(P_\lambda)$ is the set of order filters of $P_\lambda$.
Furthermore, using a unique map $\kappa:P_\lambda \rightarrow I$ called coloring, we construct an $I$-colored d-complete poset $(P_\lambda,\le,\kappa,I)$.
Then, there exist a unique order isomorphism $f:W\lambda\rightarrow\mathcal{F}(P_\lambda)$ satisfying the condition that $\mu \rightarrow s_i\mu$ is a cover relation in $W\lambda$ if and only if $f(s_i\mu)\setminus f(\mu)$ consists of one element $x$ with $\kappa(x)=i$ (\cite[Proposition 9.1]{Proc2}).
There are some important applications of these result.
For example, the problem counting the $\lambda$-minuscule elements is reduced to the combinatorial problem counting the ``standard tableaux'' of the corresponding d-complete posets (\cite[Theorem 3.5]{Stem}).
Also, constructing the ``colored hook formula'' for the corresponding from the reflection on Cartan subalgebra's dual space $\Lieh^*$ to the action ``removing hook'' on d-complete poset (\cite{Naka}).

In this paper, we study the relation between the Weyl group orbit through a dominant integral weight and the set of order filters in a d-complete poset in the case that $\Lieg$ is multiply-laced.
To do this, we use the ``folding'' technique (see Section \ref{Sec:folding}).
Assume that $\Lieg$ is of type $A_n,D_n,E_6$.
Let $\sigma$ be a non-trivial automorphism of the Dynkin diagram of $\Lieg$; note that $\sigma$ canonically induces a Lie algebra automorphism of $\Lieg$ and a linear automorphism of $\Lieh^*$.
Then the fixed point subalgebra $\Lieg(0):=\{x\in\Lieg\mid \sigma(x)=x \}$ is isomorphic to a multiply-laced finite-dimensional simple Lie algebra with $\Lieh(0):=\{h\in\Lieh\mid \sigma(h)=h \}$ the Cartan subalgebra.
Let $J$ be the set of $\sigma$-orbits in $I$, and let $\tilde{W}=\langle\tilde{s}_p\mid p\in J\rangle\subset {\it GL}(\Lieh(0)^*)$ be the Weyl group of $\Lieg(0)$.
Then, $\hat{W}:=\{w\in W\mid \sigma w \sigma^{-1}=w\}$ is group isomorphic to $\tilde{W}$.
Let $\text{res}: \Lieh^* \rightarrow \Lieh(0)^* $ be the restriction map.
The map $\text{res}|_{\hat{W}\lambda}$ gives a bijection $\hat{W}\lambda$ onto $\tilde{W}\text{res}(\lambda)$ for a dominant integral weight $\lambda$ of $\Lieg$.
Now, let $\lambda$ be a minuscule dominant integral weight of $\Lieg$.
We define $\tilde{f}:\tilde{W}\text{res}(\lambda)\rightarrow \mathcal{F}(P_\lambda)$ by $\tilde{f}\circ\text{res}=f$, and set $\tilde{\mathcal{F}}(P_{\lambda}):= \text{Im}(\tilde{f})\subset \mathcal{F}(P_\lambda)$.
\begin{theo}[$=$ Theorem \ref{iso fold filter,orbit}; main theorem]
\
\begin{enumerate}\renewcommand{\labelenumi}{(\arabic{enumi})}\label{intro1}
\item[(1)]\ The poset $(\tilde{W}\text{res}(\lambda),\le_w)$ is isomorphic to the poset $(\tilde{\mathcal{F}}(P_{\lambda}),\tilde{\unlhd})$, where $\tilde{\unlhd}$ is a partial order on $\mathcal{F}(P_\lambda)$ defined in terms of an involution $\tilde{S}_p (p\in J)$ on $\mathcal{F}(P_\lambda)$. 
\item[(2)]\ The poset $(\tilde{W}\text{res}(\lambda),\le_s)$ is isomorphic to the poset $(\tilde{\mathcal{F}}(P_{\lambda}),\subseteq)$.
\end{enumerate}
\end{theo}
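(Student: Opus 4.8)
The isomorphism in both parts is the map $\tilde f=f\circ(\mathrm{res}|_{\hat W\lambda})^{-1}$, which is a bijection of $\tilde W\,\mathrm{res}(\lambda)$ onto $\tilde{\mathcal F}(P_\lambda)$ by construction; in each case the task is to check that $\tilde f$ and $\tilde f^{-1}$ both respect the relevant orders. Everything hinges on understanding how the generator $\tilde s_p$ ($p\in J$) acts on $\hat W\lambda\cong\tilde W\,\mathrm{res}(\lambda)$. Let $t_p\in\hat W$ be the image of $\tilde s_p$ under $\tilde W\xrightarrow{\sim}\hat W$. Then $t_p=\prod_{i\in p}s_i$ when the nodes of $p$ are mutually non-adjacent (the situation in the relevant foldings) and $t_p=s_is_js_i$ in the lone adjacent-pair case; in either case $t_p$ is an involution, $\tilde\alpha_p^\vee=\sum_{i\in p}\alpha_i^\vee$, and $\langle\mathrm{res}(\mu),\tilde\alpha_p^\vee\rangle=\sum_{i\in p}\langle\mu,\alpha_i^\vee\rangle$ for $\mu\in\hat W\lambda$, each summand lying in $\{-1,0,1\}$ because $\lambda$ is minuscule. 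I would record, as a lemma checked by inspecting the foldings, that for $\mu\in\hat W\lambda$ the integers $\langle\mu,\alpha_i^\vee\rangle$ ($i\in p$) have a common sign — they are even equal when $\lambda$ is $\sigma$-invariant, so that every $\mu\in\hat W\lambda$ is $\sigma$-fixed. Setting $S_i:=f\circ s_i\circ f^{-1}$, the ``color-$i$ toggle'' on $\mathcal F(P_\lambda)$, which by \cite[Proposition 9.1]{Proc2} and minusculeness adds, deletes, or leaves unchanged a single box of color $i$ and which satisfies $S_iS_j=S_jS_i$ for $i,j\in p$, the lemma says exactly that $t_p$ corresponds under $f$ to the involution $\tilde S_p:=\prod_{i\in p}S_i$ and that $\tilde S_p(F)$ is always $\subseteq$-comparable with $F$.

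For part (1) I would transport the left weak order through $\tilde f$. Define $\tilde{\unlhd}$ to be the partial order on $\mathcal F(P_\lambda)$ generated by the relations $F\,\tilde{\lhd}\,\tilde S_p(F)$ for all $p\in J$ with $F\subsetneq\tilde S_p(F)$; this is genuinely a partial order because its transitive closure sits inside $\subsetneq$. A covering relation of $(\tilde W\,\mathrm{res}(\lambda),\le_w)$ has the form $\mu\lessdot\tilde s_p\mu$ with $\ell$ increasing, which by the lemma and the cover property of $f$ is equivalent to $f(\mu)\subsetneq\tilde S_p(f(\mu))=f(\tilde s_p\mu)$; since $\tilde S_p$ preserves $\tilde{\mathcal F}(P_\lambda)$, $\tilde f$ matches the covering relations of $\le_w$ with the generating relations of $\tilde{\unlhd}$ inside $\tilde{\mathcal F}(P_\lambda)$. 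As both orders are generated by their coverings, $\tilde f$ is the desired isomorphism.

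For part (2), because $f$ is an order isomorphism $(W\lambda,\le_s)\xrightarrow{\sim}(\mathcal F(P_\lambda),\subseteq)$, it is enough to show that the order $\hat W\lambda$ inherits from $(W\lambda,\le_s)$ agrees, under $\mathrm{res}$, with the intrinsic $\le_s$ of $\tilde W\,\mathrm{res}(\lambda)$; I would use the standard description of $\le_s$ on a Weyl group orbit by chains of decreasing reflections (cf.\ \cite{Stem}). One inclusion is a lifting argument: a decreasing $\tilde W$-reflection step on an element of $\tilde W\,\mathrm{res}(\lambda)$ lifts, via the lemma, to a chain of decreasing and mutually commuting $W$-reflection steps inside $W\lambda$, whence $\mathrm{res}(\mu)\le_s\mathrm{res}(\nu)$ implies $\mu\le_s\nu$. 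For the reverse inclusion I would exploit that $(W\lambda,\le_s)\cong\mathcal F(P_\lambda)$ is a finite distributive lattice: when $\lambda$ is $\sigma$-invariant its $\sigma$-fixed sublattice is $\hat W\lambda$, hence distributive and therefore graded, so any two comparable elements of $\hat W\lambda$ are joined by a saturated chain of covers of the subposet. A single subposet cover $\hat\nu'\lessdot\hat\nu$ then maps to one decreasing $\tilde W$-reflection step: the interval $[\hat\nu',\hat\nu]$ of $\mathcal F(P_\lambda)$ equals $\mathcal F(Q)$ for the skew piece $Q=f(\hat\nu)\setminus f(\hat\nu')$ of $P_\lambda$, the induced $\sigma$-action on $Q$ has no nonempty proper stable filter, so the set of maximal elements of $Q$ — a $\sigma$-stable up-set — is all of $Q$, forcing $Q$ to be an antichain; then any single $\sigma$-orbit in $Q$ is an up-set, so $Q$ is one $\sigma$-orbit antichain and $\hat\nu\to\hat\nu'$ is a product of commuting reflections along a $\sigma$-orbit of mutually orthogonal roots, i.e.\ one decreasing $\tilde W$-reflection step on $\mathrm{res}(\hat\nu)$. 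Stringing these together gives $\mathrm{res}(\mu)\le_s\mathrm{res}(\nu)$.

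The heart of the argument, and the step I expect to be the main obstacle, is precisely this reverse inclusion in part (2): showing that the order induced on $\hat W\lambda$ from $(W\lambda,\le_s)$ is the honest Bruhat order of the folded orbit. It rests on the interval analysis just sketched, on the sign lemma for $\langle\mu,\alpha_i^\vee\rangle$, and on the reflection-chain characterization of $\le_s$; once these are in place, part (1) and the lifting inclusion of part (2) are bookkeeping through the dictionary $s_i\leftrightarrow S_i$, $t_p\leftrightarrow\tilde S_p$. (When $\lambda$ is minuscule but not $\sigma$-invariant one argues along the same lines — using, if available, that $\mathrm{res}(\lambda)$ is then minuscule for $\Lieg(0)$, which reduces part (2) to part (1) — the crux being again the reverse inclusion of (2).)
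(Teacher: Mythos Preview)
Part~(1) runs parallel to the paper, with one caveat: you set $\tilde S_p:=\prod_{i\in p}S_i$, but in the paper $\tilde S_p$ is the toggle of Definition~\ref{filter involution} for the $J$-coloring $\tilde\kappa$, and showing these coincide (and writing the correct longer word $S_nS_{n+1}S_n$ in the non-orthogonal case) is the content of Lemma~\ref{property S fold}, which you have bypassed rather than proved.

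The substantive gap is in part~(2). The paper handles this via Lemma~\ref{bruhat fold}, an induction on height using Littelmann's sliding lemma (Proposition~\ref{strong-slide}) together with Lemma~\ref{sign coincide}; this argument works uniformly and never needs $\lambda$ to be $\sigma$-fixed or $\mathrm{res}(\lambda)$ to be minuscule. Your reverse inclusion, by contrast, rests on identifying $\hat W\lambda$ with the $\sigma$-fixed sublattice of $W\lambda$, which requires $\sigma(\lambda)=\lambda$. That hypothesis fails for every minuscule $\lambda$ in the foldings $A_{2n}\to B_n$, $E_6\to F_4$, and $D_4\to G_2$, and for $\Lambda_i$ with $1<i<n$ in $A_{2n-1}\to C_n$; in none of these is $\mathrm{res}(\lambda)$ minuscule for $\Lieg(0)$ either, so your stated fallback does not apply. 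Your forward (lifting) inclusion is also incomplete as written: a $\le_s$-step uses an arbitrary reflection $\tilde s_{\tilde\beta}$, not only a simple one, so ``via the lemma'' does not directly suffice---you would still need to check that every positive root $\tilde\beta$ of $\Lieg(0)$ lifts to a $\sigma$-orbit of mutually orthogonal positive roots of $\Lieg$ on which an extended sign lemma holds, and to treat the non-orthogonal orbit in $A_{2n}$ separately.
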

\noindent In addition, in the case that $\Lieg$ is of type $A_n$, we give an explicit description of $\tilde{\mathcal{F}}(P_{\lambda})$ (see Theorem \ref{Fexplicid}).

This paper is organized as follows. 
In Sections 2 and 3, we explain a (colored) d-complete poset and introduce an involution $S_c$ on $\mathcal{F}(P)$ for a d-complete poset $p$ and a color $c$. 
In Section 4, we fix our notation for finite-dimensional simple Lie algebras and explain the orders $\le_s, \le_w$ on $W\lambda$. 
In Section 5, we explain the fundamental relation between d-complete posets and finite-dimensional simple Lie algebras.
In Section 6, we review the ``folding'' technique for  a simply-laced finite-dimensional simple Lie algebra.
In Section 7, we introduce ``$J$-colored'' d-complete posets by using the folding technique.
In Section 8, we prove Theorem \ref{intro1} above. 
In Section 9, we give an explicit description of $\tilde{\mathcal{F}}(P_{\lambda})$ in the case that $\Lieg$ is of type $A_n$.

\paragraph{Acknowledgements.}
The author would like to thank Professor Daisuke Sagaki, who is his supervisor, for his helpful advice.

%%%%%%%%%%%%%%%%%%%%%%%%%%%%%%%%%%%%%%%%%%%%%%%%%%%%%%%%%%%%%%%%%%%%%%%%%%%%%%%%%%%%%%%%%%%%%%%%%%%%
%%%%%%%%%%%%%%%%%%%%%%%%%%%%%%%%%%%%%%%%%%%%%%%%%%%%%%%%%%%%%%%%%%%%%%%%%%%%%%%%%%%%%%%%%%%%%%%%%%%%

\section{d-complete posets.}\label{Sec:d-comp}

Let $(P,\le)$ be a poset.
When $x$ is covered by $y$ in $P$, we write $x\rightarrow y$.
For $x,y\in P$, we set $[x,y]:=\{z\in P\mid x \le z \le y\}$, which we call an \emph{interval}.
A subset $F$ is called an \emph{order filter} if every element in $P$ greater than an element in $F$ is always contained in $F$.
Let $\mathcal{F}(P)$ be the set of all order filters in $P$.
Let $(P,\le)^*$ denote the order dual set of $(P,\le)$.
If $(P,\le)$ is isomorphic, as a poset, to $(P,\le)^*$, then $(P,\le)$ is said to be \emph{self-dual}.
If the Hasse diagram of $P$ is connected, then the $P$ is said to be \emph{connected}.

\begin{defi}[{\cite[Section 2]{Proc1}}]\label{double-tailed}
For $k\ge 3$, we define a poset $d_k(1)$ by the following conditions (1) and (2) (see also Figure \ref{fig:double-tailed}):
\begin{enumerate}\renewcommand{\labelenumi}{(\arabic{enumi})}
\item $d_k(1)$ consists of $2k-2$ elements $w_k,w_{k-1},\cdots,w_3,x,y,z_3,\cdots,z_{k-1},z_k$.
\item The partial order on $d_k(1)$ is as follows:
$$w_k< w_{k-1}< \cdots< w_3,\ w_3< x< z_3,\ w_3< y< z_3,$$
$$\ x\not\le y,\ x\not\ge y,\ z_3< \cdots< z_{k-1}< z_k.$$
\end{enumerate}
We call $d_k(1)$ the \emph{double-tailed diamond}.
Also, we define $d_k^-(1) := d_k(1)\setminus \{z_k\}$ for $k\ge 3$.
\end{defi}

%%%%%%%%%%
\begin{figure}[h]
\centering
%WinTpicVersion4.32a
{\unitlength 0.1in%
\begin{picture}(41.8800,12.1500)(0.3600,-13.4300)%
% LINE 2 0 3 0 Black White  
% 82 256 512 128 640 128 640 256 768 256 768 384 640 384 640 256 512 768 384 768 512 768 512 640 640 640 640 768 768 768 768 768 896 768 768 896 640 896 640 768 512 1280 256 1280 512 1280 512 1152 640 1152 640 1280 768 1280 768 1280 1024 1280 768 1408 640 1408 640 1280 512 1792 128 1792 512 1792 512 1664 640 1664 640 1792 768 1792 768 1792 1152 1792 768 1920 640 1920 640 1792 512 2432 640 2560 768 2560 768 2688 640 2944 640 3072 512 3072 512 3200 640 3200 640 3072 768 3072 768 2944 640 3072 768 3072 896 3456 640 3584 512 3584 512 3584 384 3584 512 3712 640 3712 640 3584 768 3584 768 3456 640 3584 768 3584 1024 3968 640 4096 512 4096 512 4096 256 4224 640 4096 512 4224 640 4096 768 4096 768 3968 640 4096 768 4096 1152
% 
\special{pn 8}%
\special{pa 256 512}%
\special{pa 128 640}%
\special{fp}%
\special{pa 128 640}%
\special{pa 256 768}%
\special{fp}%
\special{pa 256 768}%
\special{pa 384 640}%
\special{fp}%
\special{pa 384 640}%
\special{pa 256 512}%
\special{fp}%
\special{pa 768 384}%
\special{pa 768 512}%
\special{fp}%
\special{pa 768 512}%
\special{pa 640 640}%
\special{fp}%
\special{pa 640 640}%
\special{pa 768 768}%
\special{fp}%
\special{pa 768 768}%
\special{pa 768 896}%
\special{fp}%
\special{pa 768 768}%
\special{pa 896 640}%
\special{fp}%
\special{pa 896 640}%
\special{pa 768 512}%
\special{fp}%
\special{pa 1280 256}%
\special{pa 1280 512}%
\special{fp}%
\special{pa 1280 512}%
\special{pa 1152 640}%
\special{fp}%
\special{pa 1152 640}%
\special{pa 1280 768}%
\special{fp}%
\special{pa 1280 768}%
\special{pa 1280 1024}%
\special{fp}%
\special{pa 1280 768}%
\special{pa 1408 640}%
\special{fp}%
\special{pa 1408 640}%
\special{pa 1280 512}%
\special{fp}%
\special{pa 1792 128}%
\special{pa 1792 512}%
\special{fp}%
\special{pa 1792 512}%
\special{pa 1664 640}%
\special{fp}%
\special{pa 1664 640}%
\special{pa 1792 768}%
\special{fp}%
\special{pa 1792 768}%
\special{pa 1792 1152}%
\special{fp}%
\special{pa 1792 768}%
\special{pa 1920 640}%
\special{fp}%
\special{pa 1920 640}%
\special{pa 1792 512}%
\special{fp}%
\special{pa 2432 640}%
\special{pa 2560 768}%
\special{fp}%
\special{pa 2560 768}%
\special{pa 2688 640}%
\special{fp}%
\special{pa 2944 640}%
\special{pa 3072 512}%
\special{fp}%
\special{pa 3072 512}%
\special{pa 3200 640}%
\special{fp}%
\special{pa 3200 640}%
\special{pa 3072 768}%
\special{fp}%
\special{pa 3072 768}%
\special{pa 2944 640}%
\special{fp}%
\special{pa 3072 768}%
\special{pa 3072 896}%
\special{fp}%
\special{pa 3456 640}%
\special{pa 3584 512}%
\special{fp}%
\special{pa 3584 512}%
\special{pa 3584 384}%
\special{fp}%
\special{pa 3584 512}%
\special{pa 3712 640}%
\special{fp}%
\special{pa 3712 640}%
\special{pa 3584 768}%
\special{fp}%
\special{pa 3584 768}%
\special{pa 3456 640}%
\special{fp}%
\special{pa 3584 768}%
\special{pa 3584 1024}%
\special{fp}%
\special{pa 3968 640}%
\special{pa 4096 512}%
\special{fp}%
\special{pa 4096 512}%
\special{pa 4096 256}%
\special{fp}%
\special{pa 4224 640}%
\special{pa 4096 512}%
\special{fp}%
\special{pa 4224 640}%
\special{pa 4096 768}%
\special{fp}%
\special{pa 4096 768}%
\special{pa 3968 640}%
\special{fp}%
\special{pa 4096 768}%
\special{pa 4096 1152}%
\special{fp}%
% STR 2 0 3 0 Black White  
% 4 256 1344 256 1408 5 0 0 0
% $d_3(1)$
\put(2.5600,-14.0800){\makebox(0,0){$d_3(1)$}}%
% STR 2 0 3 0 Black White  
% 4 768 1344 768 1408 5 0 0 0
% $d_4(1)$
\put(7.6800,-14.0800){\makebox(0,0){$d_4(1)$}}%
% STR 2 0 3 0 Black White  
% 4 1280 1344 1280 1408 5 0 0 0
% $d_5(1)$
\put(12.8000,-14.0800){\makebox(0,0){$d_5(1)$}}%
% STR 2 0 3 0 Black White  
% 4 1792 1344 1792 1408 5 0 0 0
% $d_6(1)$
\put(17.9200,-14.0800){\makebox(0,0){$d_6(1)$}}%
% STR 2 0 3 0 Black White  
% 4 2560 1344 2560 1408 5 0 0 0
% $d_3^-(1)$
\put(25.6000,-14.0800){\makebox(0,0){$d_3^-(1)$}}%
% STR 2 0 3 0 Black White  
% 4 3072 1344 3072 1408 5 0 0 0
% $d_4^-(1)$
\put(30.7200,-14.0800){\makebox(0,0){$d_4^-(1)$}}%
% STR 2 0 3 0 Black White  
% 4 3584 1344 3584 1408 5 0 0 0
% $d_5^-(1)$
\put(35.8400,-14.0800){\makebox(0,0){$d_5^-(1)$}}%
% STR 2 0 3 0 Black White  
% 4 4096 1344 4096 1408 5 0 0 0
% $d_6^-(1)$
\put(40.9600,-14.0800){\makebox(0,0){$d_6^-(1)$}}%
% DOT 0 0 3 0 Black White  
% 53 256 512 128 640 256 768 384 640 640 640 768 512 768 384 896 640 768 768 768 896 1152 640 1280 512 1280 384 1280 256 1408 640 1280 768 1280 896 1280 1024 1664 640 1792 512 1792 384 1792 256 1792 128 1920 640 1792 768 1792 896 1792 1024 1792 1152 2432 640 2560 768 2688 640 2944 640 3072 512 3200 640 3072 768 3072 896 3456 640 3584 512 3584 384 3712 640 3584 896 3584 768 3584 1024 3968 640 4096 512 4096 384 4096 256 4224 640 4096 768 4096 896 4096 1024 4096 1152 4096 1152
% 
\special{pn 4}%
\special{sh 1}%
\special{ar 256 512 16 16 0 6.2831853}%
\special{sh 1}%
\special{ar 128 640 16 16 0 6.2831853}%
\special{sh 1}%
\special{ar 256 768 16 16 0 6.2831853}%
\special{sh 1}%
\special{ar 384 640 16 16 0 6.2831853}%
\special{sh 1}%
\special{ar 640 640 16 16 0 6.2831853}%
\special{sh 1}%
\special{ar 768 512 16 16 0 6.2831853}%
\special{sh 1}%
\special{ar 768 384 16 16 0 6.2831853}%
\special{sh 1}%
\special{ar 896 640 16 16 0 6.2831853}%
\special{sh 1}%
\special{ar 768 768 16 16 0 6.2831853}%
\special{sh 1}%
\special{ar 768 896 16 16 0 6.2831853}%
\special{sh 1}%
\special{ar 1152 640 16 16 0 6.2831853}%
\special{sh 1}%
\special{ar 1280 512 16 16 0 6.2831853}%
\special{sh 1}%
\special{ar 1280 384 16 16 0 6.2831853}%
\special{sh 1}%
\special{ar 1280 256 16 16 0 6.2831853}%
\special{sh 1}%
\special{ar 1408 640 16 16 0 6.2831853}%
\special{sh 1}%
\special{ar 1280 768 16 16 0 6.2831853}%
\special{sh 1}%
\special{ar 1280 896 16 16 0 6.2831853}%
\special{sh 1}%
\special{ar 1280 1024 16 16 0 6.2831853}%
\special{sh 1}%
\special{ar 1664 640 16 16 0 6.2831853}%
\special{sh 1}%
\special{ar 1792 512 16 16 0 6.2831853}%
\special{sh 1}%
\special{ar 1792 384 16 16 0 6.2831853}%
\special{sh 1}%
\special{ar 1792 256 16 16 0 6.2831853}%
\special{sh 1}%
\special{ar 1792 128 16 16 0 6.2831853}%
\special{sh 1}%
\special{ar 1920 640 16 16 0 6.2831853}%
\special{sh 1}%
\special{ar 1792 768 16 16 0 6.2831853}%
\special{sh 1}%
\special{ar 1792 896 16 16 0 6.2831853}%
\special{sh 1}%
\special{ar 1792 1024 16 16 0 6.2831853}%
\special{sh 1}%
\special{ar 1792 1152 16 16 0 6.2831853}%
\special{sh 1}%
\special{ar 2432 640 16 16 0 6.2831853}%
\special{sh 1}%
\special{ar 2560 768 16 16 0 6.2831853}%
\special{sh 1}%
\special{ar 2688 640 16 16 0 6.2831853}%
\special{sh 1}%
\special{ar 2944 640 16 16 0 6.2831853}%
\special{sh 1}%
\special{ar 3072 512 16 16 0 6.2831853}%
\special{sh 1}%
\special{ar 3200 640 16 16 0 6.2831853}%
\special{sh 1}%
\special{ar 3072 768 16 16 0 6.2831853}%
\special{sh 1}%
\special{ar 3072 896 16 16 0 6.2831853}%
\special{sh 1}%
\special{ar 3456 640 16 16 0 6.2831853}%
\special{sh 1}%
\special{ar 3584 512 16 16 0 6.2831853}%
\special{sh 1}%
\special{ar 3584 384 16 16 0 6.2831853}%
\special{sh 1}%
\special{ar 3712 640 16 16 0 6.2831853}%
\special{sh 1}%
\special{ar 3584 896 16 16 0 6.2831853}%
\special{sh 1}%
\special{ar 3584 768 16 16 0 6.2831853}%
\special{sh 1}%
\special{ar 3584 1024 16 16 0 6.2831853}%
\special{sh 1}%
\special{ar 3968 640 16 16 0 6.2831853}%
\special{sh 1}%
\special{ar 4096 512 16 16 0 6.2831853}%
\special{sh 1}%
\special{ar 4096 384 16 16 0 6.2831853}%
\special{sh 1}%
\special{ar 4096 256 16 16 0 6.2831853}%
\special{sh 1}%
\special{ar 4224 640 16 16 0 6.2831853}%
\special{sh 1}%
\special{ar 4096 768 16 16 0 6.2831853}%
\special{sh 1}%
\special{ar 4096 896 16 16 0 6.2831853}%
\special{sh 1}%
\special{ar 4096 1024 16 16 0 6.2831853}%
\special{sh 1}%
\special{ar 4096 1152 16 16 0 6.2831853}%
\special{sh 1}%
\special{ar 4096 1152 16 16 0 6.2831853}%
\end{picture}}%
\vspace{10pt}
\caption{Double-tailed diamonds.}\label{fig:double-tailed}
\end{figure}
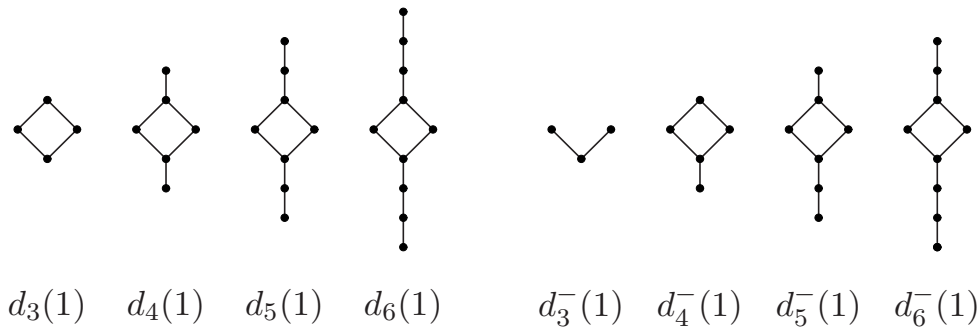
%%%%%%%%%%

\begin{defi}[{\cite[Section 2]{Proc1}}]\label{dkinterval}
Let $P$ be a poset, and $x,y\in P$.
For $k\ge 3$ (resp., $k\ge4$), if the interval $[x,y]$ is isomorphic to $d_k(1)$ (resp., $d_k^-(1)$), then we say that $[x,y]$ is a \emph{$d_k$-interval} (resp., \emph{$d_k^-$-interval}).
If $w,x,y\in P$ satisfy $w\rightarrow x$ and $w\rightarrow y$, then we say that $\{w,x,y\}$ is a \emph{$d_3^-$-interval}.
\end{defi}

\begin{defi}[{\cite[Section 3]{Proc1}}]\label{special dkinterval}
Let $P$ be a poset.
Let $k\ge 4$ (resp., $k=3$), and let $I=[x,y]$ (resp., $I=\{w,x,y\}$) be a $d_k^-$-interval in $P$.
If $I\cup \{z\}$ is not a $d_k$-interval for any $z\in P$, then the $d_k^-$-interval $I$ is called an \emph{incomplete $d_k^-$-interval}.
If there is another $d_k^-$-interval $I'=[x',y']$ (resp., $I'=\{w',x',y'\}$) such that $I\setminus \{\min I\} = I'\setminus \{\min I'\}$ and $\min I \not= \min I'$, then the $d_k^-$-interval $I$ is called an \emph{overlapping $d_k^-$-interval}.
\end{defi}

\begin{defi}[{\cite[Section 3]{Proc1}}]\label{d-complete}
A finite poset $P$ is called a \emph{d-complete poset} if $P$ satisfies the following conditions (D1)-(D3):
\begin{enumerate}\renewcommand{\labelenumi}{(D\arabic{enumi})}
\item There is no incomplete $d_k^-$-interval in $P$ for any $k\ge 3$.
\item If $I$ is a $d_k$-interval in $P$ for some $k\ge 3$, then there is no element that is not included in $I$ and is covered by $\max I$.
\item There is no overlapping $d_k^-$-interval in $P$ for any $k\ge 3$.
\end{enumerate}
\end{defi}

%%%%%%%%%%
\begin{figure}[h]
\centering
\input{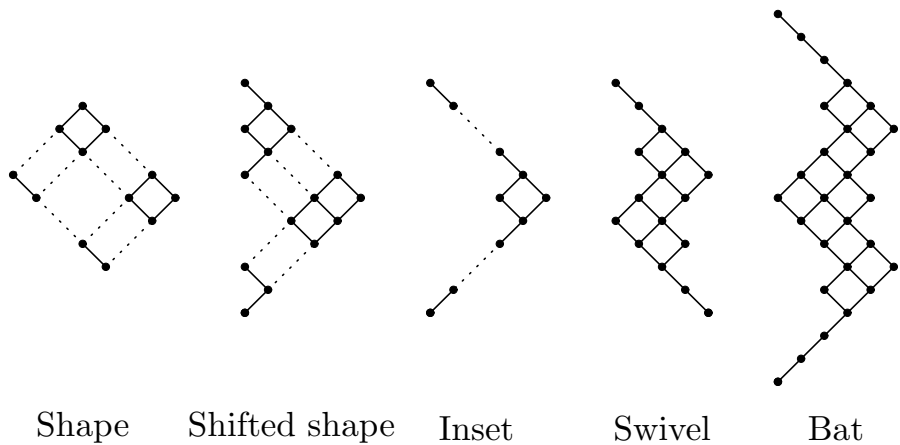}
\vspace{10pt}
\caption{Connected, self-dual d-complete posets.}\label{fig:self-dual}
\end{figure}
%%%%%%%%%%

\begin{defi}[{\cite[Section 4]{Proc1}}]\label{top tree}
Let $P$ be a d-complete poset.
We define the \emph{top tree} $T_P$ of $P$ to be the subset of $P$ consisting of all elements $x\in P$ satisfying the condition that
\begin{enumerate}\renewcommand{\labelenumi}{(T)}
\item $\# \{ z\in P\mid y\rightarrow z\}\le 1$ for every $y\in P$ such that $x\le y$.
\end{enumerate}
\end{defi}

\begin{prop}[{\cite[Sections 3 and 14]{Proc1}},{\cite[Proposition 8.6]{Proc2}}]\label{property d-comp}
Let $P$ be a d-complete poset.
\begin{enumerate}\renewcommand{\labelenumi}{(\arabic{enumi})}
\item If $P$ is connected, then $P$ has a unique maximum element.
\item For each $w\in P\setminus T_P$,\ there are unique $z\in P$ and $k\ge 3$ such that $[w,z]$ is a $d_k$-interval.
\item A connected self-dual d-complete poset is isomorphic, as a poset, to one of those in Figure \ref{fig:self-dual}.
\end{enumerate}
\end{prop}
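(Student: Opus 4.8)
The three assertions have very different depths, so the plan is to treat them separately.

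\emph{Part (1).} The plan is a ``valley-filling'' argument. Since $P$ is finite it has a maximal element; suppose for contradiction that $m\ne m'$ are two distinct maximal elements. By connectedness of the Hasse diagram there is a walk $m=p_0,p_1,\dots,p_n=m'$ in which each pair $\{p_{i-1},p_i\}$ is a cover relation. Because both endpoints are maximal, such a walk must contain a \emph{valley}, i.e.\ an index $i$ with $p_i\rightarrow p_{i-1}$ and $p_i\rightarrow p_{i+1}$. If $p_{i-1}=p_{i+1}$ we may delete one occurrence and shorten the walk; otherwise $p_{i-1}$ and $p_{i+1}$ are incomparable (a strict relation would contradict one of the two covers), so $\{p_i,p_{i-1},p_{i+1}\}$ is a $d_3^-$-interval, and by (D1) it is not incomplete, whence there is $z\in P$ with $[p_i,z]\cong d_3(1)$; in particular $p_{i-1}\rightarrow z$ and $p_{i+1}\rightarrow z$. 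Replacing the subwalk $p_{i-1},p_i,p_{i+1}$ by $p_{i-1},z,p_{i+1}$ keeps the length unchanged, and if one always fills the valley $p_i$ whose principal filter $\{p\in P\mid p\ge p_i\}$ is largest, a routine potential argument (comparing, lexicographically, the descending-sorted lists of filter-sizes of the valleys) shows the process terminates. A walk between two maximal elements that has no valley must descend from $m$ and can never turn back up, so it is a strictly descending chain, forcing $m>m'$ unless $n=0$; hence $m=m'$, a contradiction.

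\emph{Part (2).} Given $w\in P\setminus T_P$, the failure of (T) provides an element $y\ge w$ that is covered by at least two elements; fix a minimal such $y$. The plan is to show that $w$ is the minimum of a $d_k$-interval whose ``diamond bottom'' is $y$. Applying (D1) to the $d_3^-$-interval formed by $y$ and two of the elements covering it, and then completing further $d_j^-$-intervals by (D1) as needed, produces a candidate interval $[w,z]$; one then verifies, using (D1)--(D3), that it really is a $d_k$-interval. The substantive points in that verification are that the part of $P$ between $w$ and $y$ is a chain (the ``bottom tail'') and that it is matched in length by a ``top tail'' sitting above the diamond --- both forced, ultimately, by the absence of incomplete $d_k^-$-intervals --- so that $k=3+(\text{length of }[w,y])$. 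Uniqueness of $(z,k)$ follows because any $d_k$-interval with minimum $w$ must have $y$ as its diamond bottom and $[w,y]$ as its bottom tail, after which (D2) and (D3) pin down the remainder.

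\emph{Part (3).} This is Proctor's classification theorem, and it is where essentially all of the work lies. The route I would follow is the one of \cite{Proc1,Proc2}: to a connected d-complete poset $P$ one attaches its top tree $T_P$ --- a genuine rooted tree, rooted at the maximum furnished by Part~(1) --- together with the canonical coloring of $P$ by the node set of a tree $\Gamma$ read off from $T_P$; the structure theory then shows that $P$ is completely determined by this colored data and is forced to lie in an explicit finite list of families. Imposing self-duality compels $\Gamma$, together with its distinguished node, to carry a compatible symmetry, and this pins it down to the minuscule data of types $A_n$, $D_n$, $E_6$, $E_7$; correspondingly $P$ is a rectangle, a shifted staircase, a double-tailed diamond $d_k(1)$, or one of the $E_6$- or $E_7$-minuscule posets, i.e.\ one of the posets in Figure~\ref{fig:self-dual}. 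Conversely each poset in that figure is checked directly to be connected, self-dual and d-complete. The main obstacle is plainly Part~(3): it is not a short argument but the culmination of the theory developed in \cite{Proc1,Proc2}, so in practice one quotes it rather than reproving it.
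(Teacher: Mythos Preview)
The paper does not prove this proposition; it is stated as a citation to \cite{Proc1,Proc2} with no accompanying argument. Your proposal therefore goes beyond what the paper does, which is simply to quote the result.

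On the merits of your sketches: Part~(1) is essentially correct, though your termination potential is more elaborate than needed --- replacing a valley $p_i$ by the diamond-top $z$ strictly increases the sum of the heights of the vertices along the walk (in any fixed rank function on the finite poset $P$), and that sum is bounded, so the process terminates without any lexicographic bookkeeping. Part~(2) has the right shape, but the real content --- that $[w,y]$ is a chain, that its length forces an equally long top tail via repeated application of (D1), and that (D2)--(D3) then pin down $z$ and $k$ --- is exactly what your sketch defers; as written it is a plan rather than a proof. Part~(3) you correctly identify as Proctor's classification theorem, to be cited rather than reproved, which is precisely the paper's stance on the whole proposition.
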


\begin{exam}\label{diagram}
(1)\ For $m,n\ge 1$, we set $Y_{m,n}:=\{(i,j)\mid i,j\in\mathbb{Z},\ 1\le i\le m,\ 1\le j\le n\}$; we identify $Y_{m,n}$ with a Young diagram of rectangular shape as the left diagram in Figure \ref{fig:diagram1}.
We define a partial order on $Y_{m,n}$ as follows.
If $i_1\ge i_2$ and $j_1\ge j_2$, then $(i_1,j_1)\le(i_2,j_2)$.
Then the poset $(Y_{m,n},\le)$ is a d-complete poset of Shape class in Figure \ref{fig:self-dual}.
The top tree $T_{Y_{m,n}}$ of $Y_{m,n}$ is identical to the set of those cells in the first row or in the first column; see the right diagram in Figure \ref{fig:diagram1}.

%%%%%%%%%%
\ytableausetup{mathmode,boxsize=2.4em}
\begin{figure}[h]
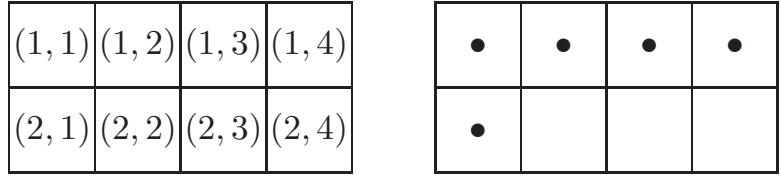

\begin{center}
\begin{ytableau}
(1,1)&(1,2)&(1,3)&(1,4)\\
(2,1)&(2,2)&(2,3)&(2,4)
\end{ytableau}
\qquad
\begin{ytableau}
\bullet&\bullet&\bullet&\bullet\\
\bullet&&&
\end{ytableau}
\end{center}
\caption{The Young diagram corresponding to $Y_{2,4}$ and its top tree.}\label{fig:diagram1}
\end{figure}
%%%%%%%%%%

(2)\ For $n\ge 1$, we set $SY_{n}=\{(i,j)\mid i,j\in\mathbb{Z},\ 1\le i\le n,i\le j\le n\}$; we identify $SY_{n}$ with a shifted Young diagram of ``triangular shape'' as the left diagram in Figure \ref{fig:diagram2}.
We define a partial order on $SY_{n}$ as that on $Y_{m,n}$.
Then the poset $(SY_{n},\le)$ is a d-complete poset of Shifted Shape class in Figure \ref{fig:self-dual}.
The top tree $T_{SY_{n}}$ of $SY_{n}$ is identical to the set of those cells in the first row or in the second column; see the right diagram in Figure \ref{fig:diagram2}.

%%%%%%%%%%
\ytableausetup{mathmode,boxsize=2.4em}
\begin{figure}[h]
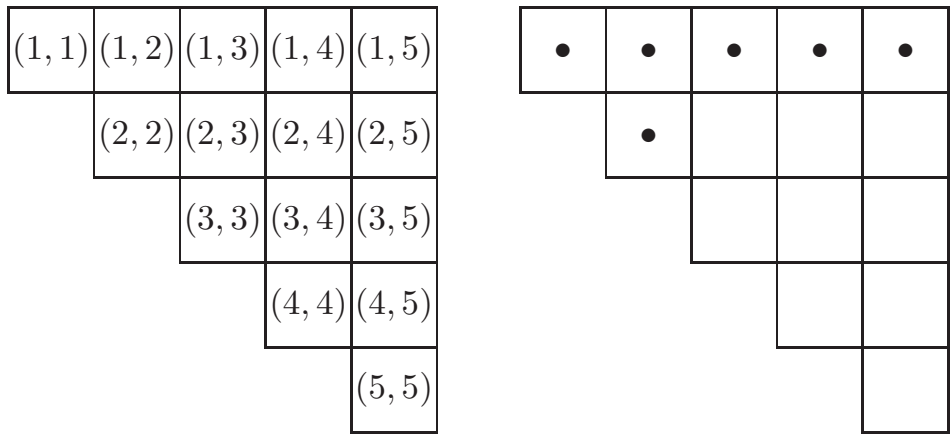

\begin{center}
\begin{ytableau}
(1,1)&(1,2)&(1,3)&(1,4)&(1,5)\\
\none&(2,2)&(2,3)&(2,4)&(2,5)\\
\none&\none&(3,3)&(3,4)&(3,5)\\
\none&\none&\none&(4,4)&(4,5)\\
\none&\none&\none&\none&(5,5)
\end{ytableau}
\qquad
\begin{ytableau}
\bullet&\bullet&\bullet&\bullet&\bullet\\
\none&\bullet&&&\\
\none&\none&&&\\
\none&\none&\none&&\\
\none&\none&\none&\none&
\end{ytableau}
\end{center}
\caption{The shifted Young diagram corresponding to $SY_{5}$ and its top tree.}\label{fig:diagram2}
\end{figure}
%%%%%%%%%%
\end{exam}

In what follows, we use Young diagrams and shifted Young diagrams for d-complete posets of Shape and Shifted Shape classes.
For a given subset $X$ in these d-complete posets $P$, we indicate an element in $X$ (resp., in $P\setminus X$) by a white cell (resp., gray cell).
For example, the left diagram in Figure \ref{fig:filter diagram} indicates the subset $\{(1,1),(1,2),(1,3),(2,1)\}$ of $Y_{2,4}$, which is in fact an order filter of $Y_{2,4}$.
The right diagram in Figure \ref{fig:filter diagram} indicates the subset $\{(1,1),(1,2),(1,3),(1,4),(2,2),(2,3),(3,3)\}$ of $SY_{5}$, which is in fact an order filter of $SY_{5}$.

%%%%%%%%%%
\ytableausetup{mathmode,boxsize=1.6em}
\begin{figure}[h]
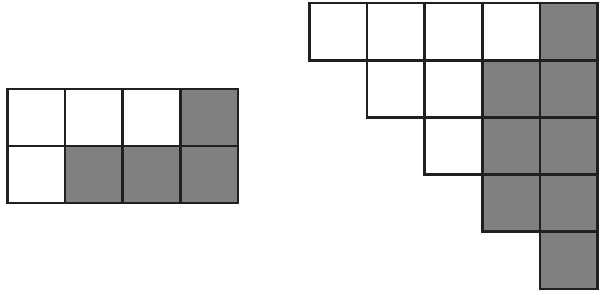

\begin{center}
\ydiagram[*(gray)]{3+1,1+3}*[*(white)]{4,4}\qquad \ydiagram[*(gray)]{4+1,3+2,3+2,3+2,4+1}*[*(white)]{5,1+4,2+3,3+2,4+1}
\end{center}
\caption{Examples of order filters of  d-complete posets.}\label{fig:filter diagram}
\end{figure}
%%%%%%%%%%

%%%%%%%%%%%%%%%%%%%%%%%%%%%%%%%%%%%%%%%%%%%%%%%%%%%%%%%%%%%%%%%%%%%%%%%%%%%%%%%%%%%%%%%%%%%%%%%%%%%%
%%%%%%%%%%%%%%%%%%%%%%%%%%%%%%%%%%%%%%%%%%%%%%%%%%%%%%%%%%%%%%%%%%%%%%%%%%%%%%%%%%%%%%%%%%%%%%%%%%%%

\section{Colored d-complete posets and involutions on $\mathcal{F}(P)$.}\label{Sec:color}

Let $(P,\le)$ be a poset, and let $C$ be a set.
We call a map $\kappa:P\rightarrow C$ a \emph{coloring} of $P$ with $C$ the set of \emph{colors}
 the quadruple $(P,\le,\kappa,C)$ a \emph{colored poset}.

\begin{prop}[{\cite[Proposition 8.6]{Proc2}}]\label{colored d-comp}
Let $(P,\le)$ be a d-complete poset, and let $C$ be a set such that $\#C=\#T_P$.
There exists a coloring $\kappa:P\rightarrow C$ of $P$ satisfying the following conditions (a) and (b):
\begin{enumerate}\renewcommand{\labelenumi}{(\alph{enumi})}
\item The restriction of $\kappa:P\rightarrow C$ to the top tree $T_P$ is a bijection from  $T_P$ onto $C$.
Namely, each element of $T_P$ has a different color from each other.
\item If $[w,z]$ is a $d_k$-interval for some $k\ge 3$, then $\kappa(w)=\kappa(z)$.
\end{enumerate}
Moreover, this coloring of $P$ with $C$ the set of colors is unique, up to the coloring of the top tree $T_P$ in (a).
In this case, we call the quadruple $(P,\le,\kappa,C)$ a \emph{colored d-complete poset}.
\end{prop}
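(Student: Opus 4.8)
The plan is to construct the coloring $\kappa$ by recursion on the poset, propagating color assignments downward from the top tree $T_P$ through the $d_k$-intervals, and then to verify conditions (a), (b), well-definedness, and uniqueness separately. First I would fix an arbitrary bijection $\kappa_0 : T_P \to C$; this accounts for the freedom mentioned in the last sentence of the statement. By Proposition \ref{property d-comp}(2), every $w \in P \setminus T_P$ lies at the bottom of a \emph{unique} $d_k$-interval $[w,z_w]$ for a unique $k \ge 3$, and one checks from the definition of the top tree that $z_w$ is strictly closer to the top tree than $w$ (more precisely, $z_w$ has strictly fewer elements of $P \setminus T_P$ above it, or some similar rank function decreases). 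This lets me define $\kappa(w) := \kappa(z_w)$ by recursion: the value $\kappa(z_w)$ is already determined either because $z_w \in T_P$ or because $z_w$ was handled at an earlier stage of the recursion. The recursion terminates since $P$ is finite and the relevant rank strictly decreases.

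Next I would check that this $\kappa$ satisfies (a) and (b). Condition (a) holds by construction, since we never reassign colors to elements of $T_P$. For (b): if $[w,z]$ is \emph{any} $d_k$-interval, then $w \notin T_P$ (an element of $T_P$ cannot be covered by two incomparable elements, which $w$ is inside a double-tailed diamond), so $w$ has its canonical interval $[w, z_w]$ from Proposition \ref{property d-comp}(2); by uniqueness of that interval, $z = z_w$, hence $\kappa(w) = \kappa(z)$ as required. The main subtlety — and the step I expect to be the real obstacle — is confirming that the recursion is genuinely consistent, i.e. that the "distance to the top tree" really does decrease along $w \mapsto z_w$, so that no circular dependency arises. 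This requires a careful argument using the d-completeness axioms (D1)–(D3): (D1) guarantees the $d_k^-$-interval below any appropriate pair completes to a $d_k$-interval, (D2) controls what sits above $\max$ of a $d_k$-interval, and (D3) rules out the overlapping configurations that would make $z_w$ ambiguous or would allow $w$ to "escape" toward the top along two different chains. I would isolate this as a lemma: for $w \in P \setminus T_P$ with canonical interval $[w,z_w]$, one has (say) $|\{x \in P \setminus T_P : x \ge z_w\}| < |\{x \in P \setminus T_P : x \ge w\}|$, proved by noting $w \le z_w$ with $w \ne z_w$ and $w$ itself belongs to the larger set but not the smaller.

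Finally, for uniqueness up to the choice on $T_P$: suppose $\kappa'$ is another coloring satisfying (a) and (b) with the same restriction $\kappa'|_{T_P} = \kappa|_{T_P}$. I would show $\kappa' = \kappa$ by the same recursion: for $w \in P \setminus T_P$, condition (b) applied to the canonical $d_k$-interval $[w, z_w]$ forces $\kappa'(w) = \kappa'(z_w)$, which equals $\kappa(z_w) = \kappa(w)$ by the inductive hypothesis. That these two coloring agree on $T_P$ is the only input, so any two colorings of $P$ differ precisely by a permutation of $C$ composed with $\kappa|_{T_P}$, giving the stated uniqueness. The bulk of the writing will be the consistency lemma; conditions (a), (b) and uniqueness are then short formal consequences.
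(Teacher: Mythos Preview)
The paper does not supply its own proof of this proposition; it simply quotes the result from \cite[Proposition~8.6]{Proc2}. So there is nothing in the paper to compare your argument against.

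Your outline is essentially correct and is the standard way to establish the result. The recursion $\kappa(w) := \kappa(z_w)$ is well-founded for the trivial reason that $z_w > w$, so $\#\{x \in P : x \ge z_w\} < \#\{x \in P : x \ge w\}$; you do not need the more elaborate count over $P \setminus T_P$, nor any appeal to (D1)--(D3) beyond what is already packaged in Proposition~\ref{property d-comp}(2). Likewise your uniqueness argument is exactly right: condition (b) together with the uniqueness of $z_w$ forces any two colorings that agree on $T_P$ to agree everywhere.

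One small imprecision worth fixing: in arguing that the bottom element $w$ of a $d_k$-interval cannot lie in $T_P$, you say $w$ is covered by two incomparable elements. That is literally true only for $k = 3$; for $k \ge 4$ the minimum $w = w_k$ is covered only by $w_{k-1}$. The element covered by two incomparable elements is $w_3$. Since $w_3 \ge w$, condition (T) in Definition~\ref{top tree} still fails for $w$, so your conclusion $w \notin T_P$ stands; just adjust the parenthetical accordingly.
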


%%%%%%%%%%
\ytableausetup{mathmode,boxsize=1.6em}
\begin{figure}[h]
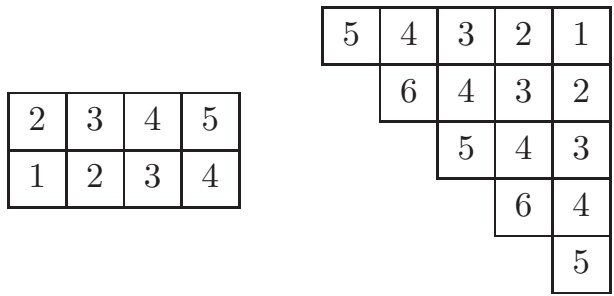

\begin{center}
\begin{ytableau}
2&3&4&5\\
1&2&3&4
\end{ytableau}
\qquad 
\begin{ytableau}
5&4&3&2&1\\
\none&6&4&3&2\\
\none&\none&5&4&3\\
\none&\none&\none&6&4\\
\none&\none&\none&\none&5
\end{ytableau}
\end{center}
\caption{Colored d-complete posets.}\label{fig:c-d-comp}
\end{figure}
%%%%%%%%%%

\begin{prop}[{\cite[Section 3]{Proc2}}]\label{property c-d-comp}
Let $(P,\le,\kappa,C)$ be a colored d-complete poset.
\begin{enumerate}\renewcommand{\labelenumi}{(\arabic{enumi})}
\item[(1)]\ Let $x,y\in P$.
If there is the covering relation between $x$ and $y$, or if $x$ and $y$ are incomparable, then $\kappa(x)\not=\kappa(y)$, that is, $x$ and $y$ have distinct colors.
\item[(2)]\ Let $I$ be an interval of $P$.
If $I$ is a totally order set, then $\kappa(x)\not=\kappa(y)$ for all elements $x,y\in I$ with $x\not=y$, that is, each element in $I$ has a distinct color from each other.
\item[(3)]\ For each $c\in C$, the subset $\kappa^{-1}(\{c\})$ consisting of elements in $P$ having the color $c$ is a totally order set.
\end{enumerate}
\end{prop}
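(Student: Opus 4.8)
The plan is to make the coloring $\kappa$ explicit first, and then exploit the double-tailed-diamond axioms (Definition~\ref{d-complete}). Recall that, as in the proof of Proposition~\ref{colored d-comp}, the coloring $\kappa$ is built recursively: on the top tree $T_P$ it is the prescribed bijection onto $C$, and for $w\in P\setminus T_P$ one puts $\kappa(w):=\kappa(z)$, where $[w,z]$ is the unique $d_k$-interval with minimum $w$ given by Proposition~\ref{property d-comp}(2). This recursion is well founded, since $z>w$ strictly decreases the number of elements lying above, and it terminates inside $T_P$. Two elementary facts will be used repeatedly: $T_P$ is an order filter of $P$ (if $x\le y$ and $x\in T_P$ then every $u\ge y$ also satisfies $u\ge x$, so $u$ is covered by at most one element), and $\kappa|_{T_P}$ is injective. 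Hence: if $x<y$ and $\kappa(x)=\kappa(y)$ then $x\notin T_P$; and if $\kappa(x)=\kappa(y)$ with $x$ and $y$ incomparable, then neither lies in $T_P$ (otherwise running the recursion upward from the one outside $T_P$ reaches the element of $T_P$ of its color, which would be the other one, forcing comparability).

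First I would prove (2). Arguing by contradiction, take a totally ordered interval $[x,y]$ with $x<y$, $\kappa(x)=\kappa(y)$, and $|[x,y]|$ minimal; minimality makes $\kappa$ injective on $[x,y]$ away from the endpoints. Here $x\notin T_P$, so there is a $d$-interval $[x,z]\cong d_k(1)$ with minimum $x$, and $\kappa(z)=\kappa(x)$ by condition~(b) of Proposition~\ref{colored d-comp}. If $z$ lay in $[x,y]$, the two incomparable diamond elements of $[x,z]$ would lie between $x$ and $y$, impossible in a chain; so $z\notin[x,y]$. Using condition~(D1) and the uniqueness in Proposition~\ref{property d-comp}(2), one checks, by a case analysis on how the chain $[x,y]$ can sit inside $[x,z]$, that $y$ is a bottom-tail element $w_j$ or a diamond element of $d_k(1)$ (a top-tail element is excluded, since it too would drag the incomparable diamond pair into $[x,y]$). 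In either case one produces a \emph{new} totally ordered interval with equal-colored endpoints whose minimum is strictly above $x$: the interval $[z_j,z]$ when $y=w_j$ (using $\kappa(z_j)=\kappa(w_j)$), and the interval $[y,z]$ when $y$ is a diamond element (it reduces inside $d_k(1)$ to the chain from $y$ up to $z$, with $\kappa(y)=\kappa(z)$). Iterating produces a strictly increasing sequence of such minima in the finite poset $P$ --- a contradiction. Assertion~(1) for a covering pair is then the case $[x,y]=\{x,y\}$ of~(2).

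Next I would prove (3); this also settles the incomparable case of~(1). Suppose $\kappa(x)=\kappa(y)=c$ with $x$ and $y$ incomparable; both lie outside $T_P$, and the recursion, run upward from each, reaches the unique color-$c$ element $t_c$ of $T_P$. Since $d$-intervals with prescribed minimum are unique, the two recursion chains $x=x_0<x_1<\dots<x_m=t_c$ and $y=y_0<\dots<y_n=t_c$ agree from their first common element $u$ onward. If $x$ or $y$ lay on the other chain then $x,y$ would be comparable; otherwise the predecessors $a:=x_{i-1}$ and $b:=y_{j-1}$ of $u=x_i=y_j$ are distinct, and $[a,u]$, $[b,u]$ are two distinct $d_k$-intervals sharing the maximum $u$, with $a,b,u$ all of color $c$ (a recursion chain is monochromatic). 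Applying~(2) to the several sub-chains of a double-tailed diamond (Definition~\ref{double-tailed}) shows that the minimum and maximum are the \emph{only} color-$c$ elements of a $d_k$-interval whose endpoints have color $c$; consequently $a$ and $b$ must be incomparable. It remains to exclude two distinct $d_k$-intervals with a common maximum: by condition~(D2) the elements covered by $u$ form exactly the upper fork of each of $[a,u]$, $[b,u]$, so either both intervals equal the diamond $d_3(1)$ --- whence they share that fork and, having distinct minima, yield two overlapping $d_3^-$-intervals, forbidden by~(D3) --- or both are longer, in which case they share the element just below $u$ and one descends their two tails in step, each step pinned by~(D2), until a common diamond with two distinct minimal elements appears, again an overlapping $d_3^-$-configuration contradicting~(D3). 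Hence $\kappa^{-1}(c)$ is totally ordered, which is~(3).

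The hard part will be the two places where the axioms (D1)--(D3) are genuinely used: the case analysis in~(2) that pins down which elements of $d_k(1)$ the chain can meet and forbids it from escaping, and --- more seriously --- the uniqueness of a $d_k$-interval with prescribed maximum invoked at the end of~(3), which is the ``dual'' of Proposition~\ref{property d-comp}(2) and must be extracted by a careful descending analysis of the two interval tails. Both of these are carried out in \cite[Section~3]{Proc2}, whose argument this plan follows.
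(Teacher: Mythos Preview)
The paper does not supply its own proof of this proposition; it is stated as a citation of \cite[Section~3]{Proc2} and used as a black box thereafter. Consequently there is no ``paper's proof'' to compare your attempt against. Your outline follows the argument of Proctor that the paper cites, as you yourself note in the final paragraph, so in that sense your approach and the paper's coincide.

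A few remarks on the sketch itself, should you wish to turn it into a self-contained proof. In part~(2), the claim that $y$ must lie inside the $d_k$-interval $[x,z]$ (so that it is a bottom-tail or diamond element) is the crux, and your argument for it is not yet complete: once the ascending chain $[x,y]$ reaches a diamond element $a$, one must rule out that $a$ is covered in $P$ by something outside $[x,z]$, which requires analysing the (possible) $d$-interval with minimum $a$. Also, having invoked minimality of $|[x,y]|$ at the outset, you then switch to the ``minimum strictly increases'' termination argument; either works, but the two should not be mixed, since the new interval you produce need not be minimal. In part~(3), the descent showing that two $d$-intervals with a common maximum must eventually yield an overlapping $d_3^-$-configuration is correct in spirit, but you should first observe that the two intervals have the \emph{same} $k$ (the case $k=3$ versus $k\ge 4$ forces different numbers of elements covered by $u$, and (D2) makes this a contradiction), after which the step-by-step descent you describe goes through. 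None of these are fatal, and all are handled in \cite{Proc2}; they are exactly the ``hard parts'' you flag.
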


\begin{defi}\label{filter involution}
Let $(P,\le,\kappa,C)$ be a finite colored poset.
For each $c\in C$, we define maps $A_c,R_c,S_c: \mathcal{F}(P)\rightarrow \mathcal{F}(P)$ as follows.
For each $F\in \mathcal{F}(P)$,
$$A_c(F):=\bigcup_{\substack{F'\in \mathcal{F}(P) \\ F'\setminus F\subseteq \kappa^{-1}(\{c\})}}F' ,\qquad\qquad R_c(F):=\bigcap_{\substack{F'\in \mathcal{F}(P) \\ F\setminus F'\subseteq \kappa^{-1}(\{c\})}}F' ,$$
$$S_c(F):=
\begin{cases}
    (A_c(F)\setminus F)\cup R_c(F) & \text{if }(A_c(F)\setminus F)\cup R_c(F) \in\mathcal{F}(P), \\
    F & \textrm{otherwise}.
\end{cases}$$
\end{defi}

\begin{rema}\label{remark f-invo}
It is obvious by the definition that $A_c(F)\supseteq F \supseteq R_c(F)$.
If $F$ satisfies $R_c(F)=F$ (resp., $A_c(F)=F$), then $S_c(F)=A_c(F)$ (resp., $S_c(F)=R_c(F)$).
Also, it can be easily verified that $A_c(F)\supseteq S_c(F) \supseteq R_c(F)$.
\end{rema}

\begin{exam}\label{exam f-invo}
\ytableausetup{mathmode,boxsize=1.6em}
\begin{spacing}{2.5}
Let $P=Y_{2,4}=\ydiagram{4,4}$\ , and define a coloring $\kappa:P\rightarrow\{1,2,3\}$ for $P$ by
$\begin{ytableau}
2&3&2&1\\
1&2&3&2
\end{ytableau}$\ .
Let $F=$
$\begin{ytableau}
2&3&2&*(gray)1\\
1&*(gray)2&*(gray)3&*(gray)2
\end{ytableau}$\ ;
notice that $F$ is an order filter of $P$.
Then,\ $A_2(F),R_2(F),S_2(F)$ are as follows:
\end{spacing}

$$A_2\left(\vbox to 25pt{} 
\begin{ytableau}
2&3&2&*(gray)1\\
1&*(gray)2&*(gray)3&*(gray)2
\end{ytableau}
\right)=
\begin{ytableau}
2&3&2&*(gray)1\\
1&2&*(gray)3&*(gray)2
\end{ytableau}\ ,$$

$$R_2\left(\vbox to 25pt{} 
\begin{ytableau}
2&3&2&*(gray)1\\
1&*(gray)2&*(gray)3&*(gray)2
\end{ytableau}
\right)=
\begin{ytableau}
2&3&*(gray)2&*(gray)1\\
1&*(gray)2&*(gray)3&*(gray)2
\end{ytableau}\ ,$$

$$S_2\left(\vbox to 25pt{} 
\begin{ytableau}
2&3&2&*(gray)1\\
1&*(gray)2&*(gray)3&*(gray)2
\end{ytableau}
\right)=
\begin{ytableau}
2&3&*(gray)2&*(gray)1\\
1&2&*(gray)3&*(gray)2
\end{ytableau}\ .$$

\end{exam}

\begin{lemm}\label{property f-invo}
Let $(P,\le,\kappa,C)$ be a colored poset.
For every $F\in\mathcal{F}(P)$ and $c\in C$, the following hold.
\begin{enumerate}\renewcommand{\labelenumi}{(\arabic{enumi})}
\item $A_c(S_c(F))=A_c(F)$.
\item $R_c(S_c(F))=R_c(F)$.
\item $S_c(S_c(F))=F$. Namely, the map $S_c:\mathcal{F}(P)\rightarrow\mathcal{F}(P)$ is an involution on $\mathcal{F}(P)$.
\end{enumerate}
\end{lemm}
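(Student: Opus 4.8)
The plan is to analyze carefully the three cases hidden in the definition of $S_c$ and show that in each case applying $S_c$ a second time returns $F$. First I would establish the two auxiliary identities (1) and (2), since (3) will follow from them together with the case analysis. The key structural fact I want to isolate is a ``sandwiching'' description: for any $F\in\mathcal{F}(P)$, the filters $F'$ with $F'\setminus F\subseteq\kappa^{-1}(\{c\})$ are exactly those with $F\subseteq F'\subseteq A_c(F)$ and $A_c(F)\setminus F'\subseteq\kappa^{-1}(\{c\})$; dually for $R_c$. Using Proposition \ref{property c-d-comp}(3), the elements of $\kappa^{-1}(\{c\})$ form a chain in $P$, and intersecting this chain with $A_c(F)\setminus F$ (resp.\ with $F\setminus R_c(F)$) gives a contiguous segment of that chain. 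So $A_c$ ``pushes up'' $F$ by filling in a top segment of the $c$-colored chain lying just above $F$, and $R_c$ ``pulls down'' $F$ by removing a bottom segment of the $c$-colored chain lying just inside $F$; crucially, $A_c(F)\setminus F$ and $F\setminus R_c(F)$ are disjoint subsets of $\kappa^{-1}(\{c\})$, and their union is again a contiguous segment of the $c$-chain.

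For (1): I would show $A_c(A_c(F))=A_c(F)$ and $A_c(R_c(F))=A_c(F)$, whence $A_c(S_c(F))=A_c(F)$ whatever $S_c(F)$ equals, because in the nontrivial case $S_c(F)=(A_c(F)\setminus F)\cup R_c(F)$ differs from $A_c(F)$ only inside $\kappa^{-1}(\{c\})$, and adding/removing $c$-colored elements does not change the set of filters obtainable by enlarging within $\kappa^{-1}(\{c\})$. The idempotence $A_c\circ A_c=A_c$ is immediate from the definition as a union over a family that is closed upward in the relevant sense; $A_c\circ R_c=A_c$ follows because $R_c(F)\subseteq F\subseteq A_c(F)$ and the symmetric differences all live in the single chain $\kappa^{-1}(\{c\})$. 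Statement (2) is the order-dual of (1) and I would just invoke the duality $F\mapsto P\setminus F$ exchanging $\mathcal{F}(P)$ with $\mathcal{F}(P^*)$, $A_c\leftrightarrow R_c$.

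For (3): if $S_c(F)=F$ we are done, so assume $G:=(A_c(F)\setminus F)\cup R_c(F)\in\mathcal{F}(P)$, so $S_c(F)=G$. By (1) and (2), $A_c(G)=A_c(F)$ and $R_c(G)=R_c(F)$, so the candidate $(A_c(G)\setminus G)\cup R_c(G)$ equals $(A_c(F)\setminus G)\cup R_c(F)$. Now I compute $A_c(F)\setminus G$: since $G$ agrees with $A_c(F)$ outside $\kappa^{-1}(\{c\})$ and, within the $c$-chain, $G$ contains exactly the segment $(A_c(F)\setminus F)\cup R_c(F)$ restricted to that chain, one gets $A_c(F)\setminus G=F\setminus R_c(F)$ (the complementary segment), hence $(A_c(F)\setminus G)\cup R_c(F)=(F\setminus R_c(F))\cup R_c(F)=F$. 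It remains to check this candidate is actually a filter so that the first branch of the definition of $S_c(S_c(F))$ is taken; but the candidate is exactly $F\in\mathcal{F}(P)$, so the condition holds and $S_c(S_c(F))=F$. The main obstacle I anticipate is making the ``contiguous segment of the $c$-chain'' bookkeeping fully rigorous — in particular verifying that $A_c(F)\setminus F$ and $F\setminus R_c(F)$ are disjoint and that their positions in the chain $\kappa^{-1}(\{c\})$ are complementary within $A_c(F)\setminus R_c(F)$ — since this is what forces the set-theoretic identity $A_c(F)\setminus G=F\setminus R_c(F)$ to come out exactly. This is where Proposition \ref{property c-d-comp}(1)--(3) and the filter axiom (no element above a missing element) do the real work.
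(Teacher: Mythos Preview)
Your overall strategy --- establish (1) and (2), then use them to reduce (3) to a set-theoretic identity showing $(A_c(S_c(F))\setminus S_c(F))\cup R_c(S_c(F))=F$ --- is exactly the paper's approach. But there is a real mismatch between your argument and the hypotheses of the lemma: the statement is for an \emph{arbitrary} colored poset $(P,\le,\kappa,C)$, while Proposition~\ref{property c-d-comp} requires $(P,\le,\kappa,C)$ to be a colored \emph{d-complete} poset. So you are not entitled to assume $\kappa^{-1}(\{c\})$ is a chain, and all of your ``contiguous segment of the $c$-chain'' bookkeeping is simply unavailable at this level of generality. As written, you would only be proving the lemma for d-complete posets.

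The good news is that none of the chain structure is actually needed; the identities are purely set-theoretic. For (1), the paper just observes that $R_c(F)\subseteq S_c(F)\subseteq A_c(F)$ and that $A_c(F)\setminus R_c(F)\subseteq\kappa^{-1}(\{c\})$, so $A_c(F)\setminus S_c(F)\subseteq\kappa^{-1}(\{c\})$; this immediately gives $A_c(S_c(F))\supseteq A_c(F)$, and the reverse inclusion follows since $A_c(S_c(F))\setminus F\subseteq (A_c(S_c(F))\setminus A_c(F))\cup(A_c(F)\setminus F)\subseteq\kappa^{-1}(\{c\})$. For (3), the identity $A_c(F)\setminus G=F\setminus R_c(F)$ that you want is just
\[
A_c(F)\setminus\bigl((A_c(F)\setminus F)\cup R_c(F)\bigr)
=\bigl(A_c(F)\cap F\bigr)\cap\bigl(A_c(F)\setminus R_c(F)\bigr)
=F\setminus R_c(F),
\]
using only $R_c(F)\subseteq F\subseteq A_c(F)$; no appeal to Proposition~\ref{property c-d-comp} is required. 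Drop the chain heuristic and your outline becomes the paper's proof verbatim.
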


\begin{proof}
By the definition of $S_c(F)$, it suffices to consider the case that $(A_c(F)\setminus F)\cup R_c(F)$ is an order filter of $P$.
\begin{enumerate}\renewcommand{\labelenumi}{(\arabic{enumi})}
\item Since all elements of $A_c(F)\setminus R_c(F)$ have the color $c$ and since $S_c(F)\supseteq R_c(F)$, all elements of $A_c(F)\setminus S_c(F)$ also have the color $c$.
Hence,\ $A_c(F)\in\{F'\in \mathcal{F}(P)\mid F'\setminus S_c(F)\subseteq \kappa^{-1}(\{c\})\}$, and hence $A_c(S_c(F))\supseteq A_c(F)$ by the definition of $A_c$.
This inclusion relation also implies that all elements in $A_c(S_c(F))\setminus A_c(F)$ have the color $c$.
By the definition of $A_c$, all elements in $A_c(F)\setminus F$ have the color $c$.
Hence, $A_c(S_c(F))\in\{F'\in \mathcal{F}(P)\mid F'\setminus F\subseteq \kappa^{-1}(\{c\})\}$.
By the definition of $A_c$, we obtain $A_c(S_c(F))\subseteq A_c(F)$.
Therefore,\ $A_c(S_c(F))= A_c(F)$.
\item Similar to Part (1).
\item We compute
\begin{eqnarray*}
&&(A_c(S_c(F))\setminus S_c(F))\cup R_c(S_c(F))\\
&=&(A_c(F)\setminus ((A_c(F)\setminus F)\cup R_c(F)))\cup R_c(F)\\
&=&((A_c(F)\setminus (A_c(F)\setminus F))\cap (A_c(F)\setminus R_c(F)))\cup R_c(F)\\
&=&(F\cup R_c(F))\cap ((A_c(F)\setminus R_c(F))\cup R_c(F))\\
&=&F\cap A_c(F)\\
&=&F
\end{eqnarray*}
Therefore,\ $(A_c(S_c(F))\setminus S_c(F))\cup R_c(S_c(F))$ is an order filter of $P$, and $S_c(S_c(F))=F$.
\end{enumerate}
\end{proof}

\begin{defi}[]\label{S-order}
Let $(P,\le,\kappa,C)$ be a colored poset.
We define an order $\unlhd$ on $\mathcal{F}(P)$ as follows.
For $F,F'\in \mathcal{F}(P)$, $F\unlhd F'$ if there exists a sequence of order filters $F=F_0,F_1,\ldots,F_{n-1},F_n=F'$ such that for all $i\in \{0,1,\ldots ,n-1\}$, there exist $c_i\in C$ such that $S_{c_i}(F_i)=F_{i+1}\supset F_i$.
\end{defi}

\begin{lemm}\label{property f-invo,c-d-comp}
Let $(P,\le,\kappa,C)$ be a colored d-complete poset.
For an order filter $F$ of $P$ and a color $c\in C$, the symmetric difference of $F$ and $S_c(F)$ has at most one element.
\end{lemm}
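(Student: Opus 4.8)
\medskip

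\noindent The plan is to reduce the statement to a single combinatorial fact about two order filters that differ only inside one colour class, and then to derive that fact from Proposition~\ref{property c-d-comp}. Put $Q:=\kappa^{-1}(\{c\})$; by Proposition~\ref{property c-d-comp}(3) this is a chain, say $Q=\{q_1<q_2<\cdots<q_m\}$. First one disposes of the trivial case: if $(A_c(F)\setminus F)\cup R_c(F)\notin\mathcal{F}(P)$, then $S_c(F)=F$ by Definition~\ref{filter involution}, so the symmetric difference is empty. So assume $F':=S_c(F)=(A_c(F)\setminus F)\cup R_c(F)\in\mathcal{F}(P)$. Using $R_c(F)\subseteq F$ (Remark~\ref{remark f-invo}), a routine computation gives $F'\setminus F=A_c(F)\setminus F$ and $F\setminus F'=F\setminus R_c(F)$, and by Definition~\ref{filter involution} both right-hand sides lie in $Q$. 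Hence $F$ and $F'$ are order filters of $P$ whose symmetric difference $F\bigtriangleup F'$ is contained in $Q$.

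Next I would reduce to the case of nested filters. Since $Q$ is a chain, $F\cap Q$ and $F'\cap Q$ are order filters of the chain $Q$ and hence are comparable under inclusion; because $F$ and $F'$ agree outside $Q$, it follows that one of $F,F'$ contains the other. Swapping $F$ and $F'$ if necessary --- which leaves $F\bigtriangleup F'$ unchanged --- I may assume $F\subseteq F'$, so $F\bigtriangleup F'=F'\setminus F$. Thus it is enough to prove the following claim and then apply it with $G=F$, $H=F'$: \emph{if $G\subseteq H$ are order filters of $P$ with $H\setminus G\subseteq Q$, then $\#(H\setminus G)\le 1$.}

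To prove the claim, assume for contradiction that $\#(H\setminus G)\ge 2$. From $G\subseteq H$ and $H\setminus G\subseteq Q$ we get $H\setminus G=(H\cap Q)\setminus(G\cap Q)$, which is a set of consecutive elements of the chain $Q$; since it has at least two elements, pick two of them, $p<p'$, that are consecutive in $Q$. Then $p,p'\in H\setminus G$, and no element of $Q$ lies strictly between $p$ and $p'$ in $P$. Choose $v\in P$ with $p\le v$ and $v$ covered by $p'$ (for instance, a maximal element of the finite set $\{x\in P\mid p\le x<p'\}$). If $v=p$, then $p\to p'$ in $P$, so $\kappa(p)\ne\kappa(p')$ by Proposition~\ref{property c-d-comp}(1) --- impossible, since $\kappa(p)=\kappa(p')=c$. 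If $v\ne p$, then $p<v<p'$, so $\kappa(v)\ne c$ (otherwise $v$ would be an element of $Q$ strictly between $p$ and $p'$); now $v\ge p\in H$ gives $v\in H$, then $v\notin Q$ together with $H\setminus G\subseteq Q$ gives $v\in G$, and then $v<p'$ gives $p'\in G$, contradicting $p'\in H\setminus G$. Either way we reach a contradiction, which proves the claim, and with it the lemma.

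The heart of the proof is this claim, and it is exactly here that d-completeness is used --- through Proposition~\ref{property c-d-comp}, which supplies both that each colour class is a chain (so that ``consecutive elements of colour $c$'' is meaningful) and that a covering relation joins elements of distinct colours. The remaining steps are elementary manipulations of order filters, so I do not anticipate any further difficulty.
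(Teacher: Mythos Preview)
Your proof is correct and follows essentially the same route as the paper's: both use Proposition~\ref{property c-d-comp}(3) to know that the colour class is a chain, deduce that the two order filters are nested, and then derive a contradiction from Proposition~\ref{property c-d-comp}(1) by producing a covering relation inside the symmetric difference. The paper's argument is slightly more direct --- it simply picks any $x<y$ in the symmetric difference, takes $z$ with $x\rightarrow z\le y$, and observes that $z$ must again lie in the symmetric difference (hence have colour $c$), without first restricting to consecutive elements of $Q$ --- but the substance is the same.
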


\begin{proof}
Suppose, for a contradiction, that the symmetric difference of $F$ and $S_c(F)$ has more than one element.
Let $x,y$ be the elements of the symmetric difference, with $x\ne y$.
Because both $x$ and $y$ have the color $c$, it follows from Proposition \ref{property c-d-comp}(3) that either $x<y$ or $x>y$ holds;
we may assume that $x<y$.
Because both $F$ and $S_c(F)$ are order filters, we deduce that either $x,y\in F\setminus S_c(F)$ or $x,y\in S_c(F)\setminus F$ holds.
Assume that $x,y \in F\setminus S_c(F)$.
Since $x<y$, there exists an element $z\in P$ such that $x\rightarrow z$ and $z\le y$.
Because $x\in F$, and $F$ is an order filter, we see that $z\in F$.
Similarly, because $y\notin S_c(F)$, and $S_c(F)$ is an order filter, we see that $z\notin S_c(F)$.
Thus we get $z\in F\setminus S_c(F)$; in particular, $z$ has the color $c$.
However, this contradicts Proposition \ref{property c-d-comp}(1); recall that $x\rightarrow z$, and $x$ has the color $c$.
A proof for the case that $x,y\in S_c(F)\setminus F$ is similar.
 \end{proof}

\begin{rema}\label{S-order,inclusion}
Let $(P,\le,\kappa,C)$ be a colored d-complete poset.
By Lemma \ref{property f-invo,c-d-comp}, it is clear that for $F,F'\in \mathcal{F}(P)$, $F\subseteq F'$ if and only if $F\unlhd F'$.
In particular, $(\mathcal{F}(P),\subseteq)$ and $(\mathcal{F}(P),\unlhd)$ are order isomorphic.
\end{rema}

%%%%%%%%%%%%%%%%%%%%%%%%%%%%%%%%%%%%%%%%%%%%%%%%%%%%%%%%%%%%%%%%%%%%%%%%%%%%%%%%%%%%%%%%%%%%%%%%%%%%
%%%%%%%%%%%%%%%%%%%%%%%%%%%%%%%%%%%%%%%%%%%%%%%%%%%%%%%%%%%%%%%%%%%%%%%%%%%%%%%%%%%%%%%%%%%%%%%%%%%%

\section{Finite-dimensional simple Lie algebras.}\label{Sec:Lie}

Let $\Lieg=\Lieg(A)$ be a finite-dimensional simple Lie algebra over $\mathbb{C}$, with $A=(a_{ij})_{i,j\in I}$ the Cartan matrix.
Denote by $\Lieh$ a Cartan subalgebra of $\Lieg$,\ $\Pi^\vee=\{h_i\mid i\in I\}\subset\Lieh$ the set of simple coroots, \ $\Pi=\{\alpha_i\mid i\in I\}\subset\Lieh^*$ the set of simple roots, \ $\Delta_+\subset \Lieh^*$ the set of positive roots, \ $\Delta_-\subset \Lieh^*$ the set of negative roots,\ $\Lambda_i\in\Lieh^*(i\in I)$ the fundamental weights, and $e_i,f_i\in\Lieg (i\in I)$ the Chevalley generators.
Let $W=\langle s_i\mid i\in I\rangle$ be the Weyl group of $\Lieg$, where $s_i$ is the simple reflection in $\alpha_i$ for $i\in I$.
For $\beta\in\Delta_+$, $\beta^\vee\in \Lieh$ denotes the dual root of $\beta$, and $s_\beta\in W$ denotes the reflection in $\beta$; recall that if $\beta  = w(\beta')$ for $\beta' \in \Delta_+$ and $w \in W$, then $s_\beta = s_{w(\beta')} = ws_{\beta'}w^{-1}$.

\begin{defi}[]\label{strong-order}
Let $\lambda$ be a dominant integral weight of $\Lieg$.
We define the order $\le_s$ on the Weyl group orbit $W\lambda$ through $\lambda$ as follows.
For $\mu,\mu'\in W\lambda$, $\mu\le_s\mu'$ if there exists a finite sequence $\mu=\mu_0,\mu_1,\ldots,\mu_{k-1},\mu_k=\mu'$ of elements in $W\lambda$ and a finite sequence $\beta_0,\ldots,\beta_{k-1}$ of elements in $\Delta_+$ such that $s_{\beta_i}(\mu_i)=\mu_{i+1}$ and $\mu_i(\beta_i^\vee)>0$ for each $i\in \{0,1,\ldots k-1\}$.
\end{defi}

\begin{lemm}[]\label{1strong-ex}
Let $\mu$ be an integral weight of $\Lieg$, and $\beta\in\Delta_+$.
For $w \in W$, if $\mu <_s s_\beta(\mu)$ and $w(\beta)\in\Delta_+$, then $w(\mu) <_s w s_\beta(\mu)$.
\end{lemm}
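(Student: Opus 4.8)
The plan is to first reduce the statement to a one-step form. By the identity $s_{w(\beta)}=ws_\beta w^{-1}$ recalled in Section \ref{Sec:Lie} we have $w s_\beta(\mu)=s_{w(\beta)}\big(w(\mu)\big)$, so the assertion $w(\mu)<_s w s_\beta(\mu)$ is the same as $w(\mu)<_s s_{w(\beta)}\big(w(\mu)\big)$. Since $w(\beta)\in\Delta_+$ by hypothesis, Definition \ref{strong-order} (applied with the single reflection step $\beta_0=w(\beta)$) shows that this holds as soon as $w(\mu)\big((w(\beta))^\vee\big)>0$; note that this inequality also forces $w(\mu)\neq s_{w(\beta)}(w(\mu))$, so the relation is strict. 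Because the integer $\nu(\gamma^\vee)$ associated to a weight $\nu$ and a root $\gamma$ is invariant under the diagonal action of $W$, we have $w(\mu)\big((w(\beta))^\vee\big)=\mu(\beta^\vee)$. Hence the lemma reduces to the following claim: if $\beta\in\Delta_+$ and $\mu<_s s_\beta(\mu)$, then $\mu(\beta^\vee)>0$.

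To prove the claim I would use how $\le_s$ interacts with the dominance order on $\Lieh^*$, i.e. the partial order $\preceq$ for which $\nu\preceq\nu'$ means $\nu'-\nu\in\sum_{i\in I}\mathbb{Z}_{\geq 0}\alpha_i$. In every step $s_{\beta_i}(\mu_i)=\mu_{i+1}$ of a chain realizing $\le_s$ one has $\mu_i-\mu_{i+1}=\mu_i(\beta_i^\vee)\,\beta_i$, a positive integral multiple of a positive root; since $\mu<_s s_\beta(\mu)$ forces such a chain to have at least one step, summing over the steps shows that $\mu-s_\beta(\mu)$ is a non-zero element of $\sum_{i\in I}\mathbb{Z}_{\geq 0}\alpha_i$. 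On the other hand $\mu-s_\beta(\mu)=\mu(\beta^\vee)\,\beta$. Writing $\beta=\sum_{i\in I}c_i\alpha_i$ with all $c_i\geq 0$ and $c_i>0$ for at least one $i$ (because $\beta\in\Delta_+$), we get $\mu(\beta^\vee)c_i\geq 0$ for all $i$ and $\mu(\beta^\vee)\neq 0$; choosing $i$ with $c_i>0$ forces $\mu(\beta^\vee)>0$. This proves the claim, and hence the lemma.

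I expect the main subtlety to be recognizing that one should \emph{not} try to push a chain realizing $\mu<_s s_\beta(\mu)$ through $w$ term by term: the intermediate roots $\beta_i$ appearing in such a chain need not be carried by $w$ into $\Delta_+$ (only $w(\beta)$ is), and a step through a root with $w(\beta_i)\in\Delta_-$ would reverse the sign of the relevant pairing and violate the positivity condition in Definition \ref{strong-order}. The argument above avoids this by rebuilding the conclusion from a single reflection step and using the hypothesis only to extract the $W$-invariant scalar inequality $\mu(\beta^\vee)>0$.
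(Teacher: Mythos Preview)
Your proof is correct and follows essentially the same line as the paper's. Both arguments reduce to the key observation that $\mu<_s s_\beta(\mu)$ forces $\mu-s_\beta(\mu)=\mu(\beta^\vee)\beta$ to lie in $Q_+\setminus\{0\}$, hence $\mu(\beta^\vee)>0$; the paper then phrases the conclusion as a dichotomy (either $w(\mu)<_s ws_\beta(\mu)$ or $w(\mu)>_s ws_\beta(\mu)$) ruled out by the sign of $w(\mu)-ws_\beta(\mu)=n\,w(\beta)$, whereas you verify the one-step defining condition directly via the $W$-invariance $w(\mu)\big((w\beta)^\vee\big)=\mu(\beta^\vee)$.
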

\begin{proof}
Since $s_{w(\beta)}(w \mu) = w s_\beta w^{-1}(w\mu) = w s_\beta(\mu)$, and since $w(\mu) \not= w s_\beta(\mu)$, either $w(\mu) <_s w s_\beta(\mu)$ or $w(\mu) >_s w s_\beta(\mu)$ holds.
By the definition of $\le_s$, there exists $n\in \mathbb{Z}_{>0}$ such that  $s_\beta(\mu) = \mu - n\beta$.
Thus, $w s_\beta(\mu) = w (\mu - n\beta) = w(\mu) - nw(\beta)$.
Because $w(\beta)\in\Delta_+$, we obtain $w(\mu) <_s w s_\beta(\mu)$, as desired.
\end{proof}

\begin{prop}[{\cite[Lemma 4.1]{Litt}}]\label{strong-slide}
Let $\mu_1, \mu_2 \in W\lambda$, and $i \in I$.
\begin{enumerate}\renewcommand{\labelenumi}{(\arabic{enumi})}
\item[(1)] If $\mu_1 \le_s \mu_2$ , $\mu_1(h_i) \ge 0$ and $\mu_2(h_i) \le 0$, then $\mu_1 \le_s s_i(\mu_2)$.
\item[(2)] If $\mu_1 \le_s \mu_2$ , $\mu_1(h_i) \ge 0$ and $\mu_2(h_i) \le 0$, then $s_i(\mu_1) \le_s \mu_2$.
\item[(3)] If $\mu_1 \le_s \mu_2$ , $\mu_1(h_i) \le 0$ and $\mu_2(h_i) \le 0$, then $s_i(\mu_1) \le_s s_i(\mu_2)$.
\item[(4)] If $\mu_1 \le_s \mu_2$ , $\mu_1(h_i) \ge 0$ and $\mu_2(h_i) \ge 0$, then $s_i(\mu_1) \le_s s_i(\mu_2)$.
\end{enumerate}
\end{prop}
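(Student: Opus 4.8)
The plan is to prove the four assertions simultaneously, by induction on the length of a chain realizing $\mu_1 \le_s \mu_2$ in the sense of Definition \ref{strong-order}. I will freely use the elementary facts that $s_i(\mu) = \mu - \mu(h_i)\alpha_i$ for $\mu \in W\lambda$ — so that $\mu(h_i) \ge 0$ is equivalent to $\mu \le_s s_i(\mu)$ and $\mu(h_i) \le 0$ to $s_i(\mu) \le_s \mu$, with equality exactly when $\mu(h_i) = 0$ — that $s_i$ permutes $\Delta_+ \setminus \{\alpha_i\}$ while $s_i(\alpha_i) = -\alpha_i$, that $s_i s_\beta s_i = s_{s_i(\beta)}$, and that $\langle s_i(\mu), s_i(\beta)^\vee\rangle = \langle \mu, \beta^\vee\rangle$; transitivity of $\le_s$ is clear from Definition \ref{strong-order}.

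The base case is the ``single reflection'' version, i.e.\ a chain of length $\le 1$. If $\mu_1 = \mu_2$, all of (1)--(4) are immediate: in (1) and (2) the two sign hypotheses force $\mu_1(h_i) = 0$, hence $s_i(\mu_1) = \mu_1$, and otherwise the conclusion is a trivial identity. If $\mu_2 = s_\beta(\mu_1)$ with $\mu_1(\beta^\vee) > 0$, I split on whether $\beta = \alpha_i$. If $\beta = \alpha_i$, then $\mu_2 = s_i(\mu_1)$, $\mu_1(h_i) > 0$, and $s_i(\mu_2) = \mu_1$; now (3) and (4) are vacuous because a sign hypothesis fails, and (1), (2) are trivial identities. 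If $s_i(\beta) \in \Delta_+$, then $\mu_1 <_s \mu_2$ and Lemma \ref{1strong-ex} applied with $w = s_i$ gives $s_i(\mu_1) <_s s_i(s_\beta(\mu_1)) = s_i(\mu_2)$; combining this one inequality with the basic equivalences above and the sign hypotheses yields each of (1)--(4) in one line — e.g.\ for (2), $s_i(\mu_1) <_s s_i(\mu_2) \le_s \mu_2$.

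For the inductive step, take a chain $\mu_1 = \nu_0 \to \cdots \to \nu_k = \mu_2$ of length $k \ge 2$, put $\nu := \nu_{k-1}$, and record that $\mu_1 \le_s \nu$ is realized by a chain of length $k-1$, that $\nu <_s \mu_2$ with $(\nu, \mu_2)$ of ``single reflection'' type, and that $\nu(h_i)$ has a definite sign. In each resulting branch I would apply the induction hypothesis to the pair $(\mu_1, \nu)$ — using whichever of (1)--(4) has matching sign hypotheses — together with the base case applied to $(\nu, \mu_2)$, and glue the two conclusions by transitivity. Concretely, to prove (2): if $\nu(h_i) \ge 0$, the induction hypothesis (4) gives $s_i(\mu_1) \le_s s_i(\nu)$ and the base case (2) gives $s_i(\nu) \le_s \mu_2$, whence $s_i(\mu_1) \le_s \mu_2$; if $\nu(h_i) \le 0$, the induction hypothesis (2) already gives $s_i(\mu_1) \le_s \nu <_s \mu_2$. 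Statements (1), (3), (4) are handled by the same scheme with the roles of the four implications permuted.

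The crux — and the only real work — is the bookkeeping: one must check that in every branch the sign conditions on $\mu_1(h_i)$, $\nu(h_i)$, $\mu_2(h_i)$ line up with an available implication both at the induction step and at the base-case step, and the branch $\beta = \alpha_i$ in the last link must be routed through the ``monotone'' cases (3)/(4) rather than the ``lifting'' cases (1)/(2), since passing to $s_i$ there reverses the relation. Alternatively, one could invoke the identification of $(W\lambda, \le_s)$ with the Bruhat order on $W/W_\lambda$ noted in Section \ref{Sec:intro} and quote the lifting property for parabolic quotients, but I would prefer to keep the argument internal to Definition \ref{strong-order} and Lemma \ref{1strong-ex}.
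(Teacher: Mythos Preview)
Your argument is correct. The paper itself does not supply a proof of Proposition~\ref{strong-slide}; it simply quotes the statement from \cite[Lemma~4.1]{Litt} and uses it as a black box. So there is no ``paper's own proof'' to compare against, and your write-up in fact adds value by giving a self-contained argument internal to Definition~\ref{strong-order} and Lemma~\ref{1strong-ex}.

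A couple of minor remarks on presentation. First, the induction is really on the length of \emph{some} chosen chain realizing $\mu_1 \le_s \mu_2$; since the conclusion does not mention the chain, this is harmless, but you might phrase it as ``fix a chain and induct on its length'' to avoid ambiguity. Second, the bookkeeping you flag as the ``crux'' does go through in every branch---for instance, in (3) with $\nu(h_i)\ge 0$ one uses $s_i(\mu_1)\le_s \mu_1 \le_s \nu$ together with the base case of (1) on $(\nu,\mu_2)$ to reach $s_i(\mu_2)$; and in (4) with $\nu(h_i)\le 0$ one uses the induction hypothesis (2) followed by $\nu \le_s \mu_2 \le_s s_i(\mu_2)$. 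It would strengthen the exposition to spell these out rather than leaving them as ``the same scheme'', since the choice of which implication to invoke is not entirely symmetric across the four cases. Your closing comment about the $\beta=\alpha_i$ branch is slightly misleading: in the inductive step that branch forces $\nu(h_i)>0$ and $\mu_2(h_i)<0$, so cases (3) and (4) are vacuous there and cases (1), (2) reduce to identities via $s_i(\mu_2)=\nu$; no special routing is needed.
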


\color{black}%%%%%%%%%%

\begin{defi}[]\label{weak-order}
Let $\lambda$ be a dominant integral weight of $\Lieg$.
We define the order $\le_w$ on $W\lambda$ as follows.
For $\mu,\mu'\in W\lambda$, $\mu\le_w\mu'$ if there exists a finite sequence $\mu=\mu_0,\mu_1,\ldots,\mu_{k-1},\mu_k=\mu'$ of elements in $W\lambda$ and a finite sequence $j_0,\ldots,j_{k-1}$ of elements in $I$ such that $s_{j_i}(\mu_i)=\mu_{i+1}$ and $\mu_i(h_j)>0$ for each $i\in \{0,1,\ldots ,k-1\}$.
\end{defi}

\begin{rema}[see, e.g., {\cite[Section 4.3]{Gree} and \cite[Section 2.4]{Bjor}}]\label{bruhat}
Let $\lambda$ be a dominant integral weight, and $W_\lambda:=\{w\in W\mid w\lambda=\lambda\}$ the stabilizer of $\lambda$; we have the canonical bijection $W/W_\lambda\rightarrow W\lambda, wW_\lambda\mapsto w\lambda$.
It is known that $W_\lambda$ is the subgroup of $W$ generated by $s_i$ for $i\in I$ such that $\lambda(h_i)=0$, and each coset in $W/W_\lambda$ has a unique element whose length is minimal among the element in the coset; we regard $W/W_\lambda$ as a subset of $W$ by taking the minimal-length coset representative from each coset in $W/W_\lambda$.
The poset $W/W_\lambda$ in the restriction of the Bruhat order (resp., the weak Bruhat order) on $W$ is order isomorphic to $(W\lambda,\le_s)$ (resp., $(W\lambda,\le_w)$) under the canonical map $W/W_\lambda\rightarrow W\lambda$ above. 
\end{rema}

%%%%%%%%%%%%%%%%%%%%%%%%%%%%%%%%%%%%%%%%%%%%%%%%%%%%%%%%%%%%%%%%%%%%%%%%%%%%%%%%%%%%%%%%%%%%%%%%%%%%
%%%%%%%%%%%%%%%%%%%%%%%%%%%%%%%%%%%%%%%%%%%%%%%%%%%%%%%%%%%%%%%%%%%%%%%%%%%%%%%%%%%%%%%%%%%%%%%%%%%%

\section{Order isomorphism between $W\lambda$ and $\mathcal{F}(P)$.}\label{Sec:Lie-d-comp}

%%%%%%%%%%%%%%%%%%%%%%%%%%%%%%%%%%%%%%%%%%%%%%%%%%%%%%%%%%%%%%%%%%%%%%%%%%%%%%%%

Let $\Lieg$ be a finite-dimensional simple Lie algebra over $\mathbb{C}$.

\begin{defi}[]\label{minuscule weight}
Let $\lambda$ be a dominant integral weight of $\Lieg$.
We call $\lambda$ a \emph{minuscule weight} if $\lambda$ satisfies $(w\lambda)(h_i)\in\{-1,0,1\}$ for all $w\in W$ and $i\in I$.
\end{defi}

Table \ref{tab:minuscule} below is the list of minuscule weights of simply-laced finite-dimensional simple Lie algebras; the vertices of the Dynkin diagram are numbered as Figure \ref{fig:dynkin index1}.

\vspace{10pt}
%%%%%%%%%%
\begin{table}[h]
\begin{center}
\begin{tabular}{|c|c|c|}
\hline
$\Lieg$&minuscule weight $\lambda$\\
\hline
$A_n$&$\Lambda_1,\ldots,\Lambda_n$\\
\hline
$D_n$&$\Lambda_1,\Lambda_{n-1},\Lambda_n$\\
\hline
$E_6$&$\Lambda_1,\Lambda_5$\\
\hline
$E_7$&$\Lambda_6$\\
\hline
$E_8$&none\\
\hline
\end{tabular}
\caption{Minuscule weights; simply-laced case.}\label{tab:minuscule}
\end{center}
\end{table}
%%%%%%%%%%

%%%%%%%%%%
\begin{figure}[h]
\centering
\vspace{10pt}
%WinTpicVersion4.32a
{\unitlength 0.1in%
\begin{picture}(45.8500,28.7600)(-2.6500,-29.7100)%
% DOT 0 0 3 0 Black White  
% 6 320 480 640 480 960 480 1600 480 1920 480 1920 480
% 
\special{pn 4}%
\special{sh 1}%
\special{ar 320 480 16 16 0 6.2831853}%
\special{sh 1}%
\special{ar 640 480 16 16 0 6.2831853}%
\special{sh 1}%
\special{ar 960 480 16 16 0 6.2831853}%
\special{sh 1}%
\special{ar 1600 480 16 16 0 6.2831853}%
\special{sh 1}%
\special{ar 1920 480 16 16 0 6.2831853}%
\special{sh 1}%
\special{ar 1920 480 16 16 0 6.2831853}%
% LINE 2 0 3 0 Black White  
% 4 320 480 963 480 1606 480 1920 480
% 
\special{pn 8}%
\special{pa 320 480}%
\special{pa 963 480}%
\special{fp}%
\special{pa 1606 480}%
\special{pa 1920 480}%
\special{fp}%
% LINE 2 2 3 0 Black White  
% 2 960 480 1600 480
% 
\special{pn 8}%
\special{pa 960 480}%
\special{pa 1600 480}%
\special{dt 0.045}%
% STR 2 0 3 0 Black White  
% 4 320 159 320 320 5 0 0 0
% \tiny$1$
\put(3.2000,-3.2000){\makebox(0,0){\tiny$1$}}%
% STR 2 0 3 0 Black White  
% 4 640 159 640 320 5 0 0 0
% \tiny$2$
\put(6.4000,-3.2000){\makebox(0,0){\tiny$2$}}%
% STR 2 0 3 0 Black White  
% 4 960 159 960 320 5 0 0 0
% \tiny$3$
\put(9.6000,-3.2000){\makebox(0,0){\tiny$3$}}%
% STR 2 0 3 0 Black White  
% 4 2720 -1 2720 160 5 0 0 0
% \tiny$1$
\put(27.2000,-1.6000){\makebox(0,0){\tiny$1$}}%
% STR 2 0 3 0 Black White  
% 4 480 1119 480 1280 5 0 0 0
% \tiny$1$
\put(4.8000,-12.8000){\makebox(0,0){\tiny$1$}}%
% STR 2 0 3 0 Black White  
% 4 2720 1119 2720 1280 5 0 0 0
% \tiny$1$
\put(27.2000,-12.8000){\makebox(0,0){\tiny$1$}}%
% STR 2 0 3 0 Black White  
% 4 3040 -1 3040 160 5 0 0 0
% \tiny$2$
\put(30.4000,-1.6000){\makebox(0,0){\tiny$2$}}%
% STR 2 0 3 0 Black White  
% 4 800 1119 800 1280 5 0 0 0
% \tiny$2$
\put(8.0000,-12.8000){\makebox(0,0){\tiny$2$}}%
% STR 2 0 3 0 Black White  
% 4 3040 1119 3040 1280 5 0 0 0
% \tiny$2$
\put(30.4000,-12.8000){\makebox(0,0){\tiny$2$}}%
% STR 2 0 3 0 Black White  
% 4 3360 -1 3360 160 5 0 0 0
% \tiny$3$
\put(33.6000,-1.6000){\makebox(0,0){\tiny$3$}}%
% STR 2 0 3 0 Black White  
% 4 1120 1119 1120 1280 5 0 0 0
% \tiny$3$
\put(11.2000,-12.8000){\makebox(0,0){\tiny$3$}}%
% STR 2 0 3 0 Black White  
% 4 3360 1119 3360 1280 5 0 0 0
% \tiny$3$
\put(33.6000,-12.8000){\makebox(0,0){\tiny$3$}}%
% STR 2 0 3 0 Black White  
% 4 3680 1119 3680 1280 5 0 0 0
% \tiny$4$
\put(36.8000,-12.8000){\makebox(0,0){\tiny$4$}}%
% STR 2 0 3 0 Black White  
% 4 1440 1119 1440 1280 5 0 0 0
% \tiny$4$
\put(14.4000,-12.8000){\makebox(0,0){\tiny$4$}}%
% STR 2 0 3 0 Black White  
% 4 1760 1119 1760 1280 5 0 0 0
% \tiny$5$
\put(17.6000,-12.8000){\makebox(0,0){\tiny$5$}}%
% STR 2 0 3 0 Black White  
% 4 4000 1119 4000 1280 5 0 0 0
% \tiny$5$
\put(40.0000,-12.8000){\makebox(0,0){\tiny$5$}}%
% STR 2 0 3 0 Black White  
% 4 4320 1119 4320 1280 5 0 0 0
% \tiny$6$
\put(43.2000,-12.8000){\makebox(0,0){\tiny$6$}}%
% STR 2 0 3 0 Black White  
% 4 1120 1759 1120 1920 5 0 0 0
% \tiny$6$
\put(11.2000,-19.2000){\makebox(0,0){\tiny$6$}}%
% STR 2 0 3 0 Black White  
% 4 3360 1759 3360 1920 5 0 0 0
% \tiny$7$
\put(33.6000,-19.2000){\makebox(0,0){\tiny$7$}}%
% STR 2 0 3 0 Black White  
% 4 1600 159 1600 320 5 0 0 0
% \tiny$n-1$
\put(16.0000,-3.2000){\makebox(0,0){\tiny$n-1$}}%
% STR 2 0 3 0 Black White  
% 4 4320 -1 4320 160 5 0 0 0
% \tiny$n-1$
\put(43.2000,-1.6000){\makebox(0,0){\tiny$n-1$}}%
% STR 2 0 3 0 Black White  
% 4 4000 -1 4000 160 5 0 0 0
% \tiny$n-2$
\put(40.0000,-1.6000){\makebox(0,0){\tiny$n-2$}}%
% STR 2 0 3 0 Black White  
% 4 1920 159 1920 320 5 0 0 0
% \tiny$n$
\put(19.2000,-3.2000){\makebox(0,0){\tiny$n$}}%
% STR 2 0 3 0 Black White  
% 4 4000 639 4000 800 5 0 0 0
% \tiny$n$
\put(40.0000,-8.0000){\makebox(0,0){\tiny$n$}}%
% STR 2 0 3 0 Black White  
% 4 60 319 60 480 5 0 0 0
% \large$A_n:$
\put(0.6000,-4.8000){\makebox(0,0){\large$A_n:$}}%
% STR 2 0 3 0 Black White  
% 4 2460 319 2460 480 5 0 0 0
% \large$D_n:$
\put(24.6000,-4.8000){\makebox(0,0){\large$D_n:$}}%
% STR 2 0 3 0 Black White  
% 4 220 1439 220 1600 5 0 0 0
% \large$E_6:$
\put(2.2000,-16.0000){\makebox(0,0){\large$E_6:$}}%
% STR 2 0 3 0 Black White  
% 4 2460 1439 2460 1600 5 0 0 0
% \large$E_7:$
\put(24.6000,-16.0000){\makebox(0,0){\large$E_7:$}}%
% DOT 0 0 3 0 Black White  
% 6 480 1440 800 1440 1120 1440 1440 1440 1760 1440 1120 1760
% 
\special{pn 4}%
\special{sh 1}%
\special{ar 480 1440 16 16 0 6.2831853}%
\special{sh 1}%
\special{ar 800 1440 16 16 0 6.2831853}%
\special{sh 1}%
\special{ar 1120 1440 16 16 0 6.2831853}%
\special{sh 1}%
\special{ar 1440 1440 16 16 0 6.2831853}%
\special{sh 1}%
\special{ar 1760 1440 16 16 0 6.2831853}%
\special{sh 1}%
\special{ar 1120 1760 16 16 0 6.2831853}%
% LINE 2 0 3 0 Black White  
% 4 2720 1440 4320 1440 3360 1440 3360 1760
% 
\special{pn 8}%
\special{pa 2720 1440}%
\special{pa 4320 1440}%
\special{fp}%
\special{pa 3360 1440}%
\special{pa 3360 1760}%
\special{fp}%
% LINE 2 0 3 0 Black White  
% 4 480 1440 1760 1440 1120 1440 1120 1760
% 
\special{pn 8}%
\special{pa 480 1440}%
\special{pa 1760 1440}%
\special{fp}%
\special{pa 1120 1440}%
\special{pa 1120 1760}%
\special{fp}%
% DOT 0 0 3 0 Black White  
% 7 2720 320 3040 320 3360 320 4000 320 4320 320 4000 640 4000 640
% 
\special{pn 4}%
\special{sh 1}%
\special{ar 2720 320 16 16 0 6.2831853}%
\special{sh 1}%
\special{ar 3040 320 16 16 0 6.2831853}%
\special{sh 1}%
\special{ar 3360 320 16 16 0 6.2831853}%
\special{sh 1}%
\special{ar 4000 320 16 16 0 6.2831853}%
\special{sh 1}%
\special{ar 4320 320 16 16 0 6.2831853}%
\special{sh 1}%
\special{ar 4000 640 16 16 0 6.2831853}%
\special{sh 1}%
\special{ar 4000 640 16 16 0 6.2831853}%
% LINE 2 0 3 0 Black White  
% 6 2720 320 3360 320 4000 320 4320 320 4000 320 4000 640
% 
\special{pn 8}%
\special{pa 2720 320}%
\special{pa 3360 320}%
\special{fp}%
\special{pa 4000 320}%
\special{pa 4320 320}%
\special{fp}%
\special{pa 4000 320}%
\special{pa 4000 640}%
\special{fp}%
% LINE 2 2 3 0 Black White  
% 2 3360 320 4000 320
% 
\special{pn 8}%
\special{pa 3360 320}%
\special{pa 4000 320}%
\special{dt 0.045}%
% DOT 0 0 3 0 Black White  
% 9 1356 2556 1676 2556 1996 2556 2316 2556 2636 2556 2956 2556 3276 2556 2636 2876 2636 2876
% 
\special{pn 4}%
\special{sh 1}%
\special{ar 1356 2556 16 16 0 6.2831853}%
\special{sh 1}%
\special{ar 1676 2556 16 16 0 6.2831853}%
\special{sh 1}%
\special{ar 1996 2556 16 16 0 6.2831853}%
\special{sh 1}%
\special{ar 2316 2556 16 16 0 6.2831853}%
\special{sh 1}%
\special{ar 2636 2556 16 16 0 6.2831853}%
\special{sh 1}%
\special{ar 2956 2556 16 16 0 6.2831853}%
\special{sh 1}%
\special{ar 3276 2556 16 16 0 6.2831853}%
\special{sh 1}%
\special{ar 2636 2876 16 16 0 6.2831853}%
\special{sh 1}%
\special{ar 2636 2876 16 16 0 6.2831853}%
% LINE 2 0 3 0 Black White  
% 4 1356 2556 3276 2556 2636 2556 2636 2876
% 
\special{pn 8}%
\special{pa 1356 2556}%
\special{pa 3276 2556}%
\special{fp}%
\special{pa 2636 2556}%
\special{pa 2636 2876}%
\special{fp}%
% STR 2 0 3 0 Black White  
% 4 1356 2235 1356 2396 5 0 0 0
% \tiny$1$
\put(13.5600,-23.9600){\makebox(0,0){\tiny$1$}}%
% STR 2 0 3 0 Black White  
% 4 1676 2235 1676 2396 5 0 0 0
% \tiny$2$
\put(16.7600,-23.9600){\makebox(0,0){\tiny$2$}}%
% STR 2 0 3 0 Black White  
% 4 1996 2235 1996 2396 5 0 0 0
% \tiny$3$
\put(19.9600,-23.9600){\makebox(0,0){\tiny$3$}}%
% STR 2 0 3 0 Black White  
% 4 2316 2235 2316 2396 5 0 0 0
% \tiny$4$
\put(23.1600,-23.9600){\makebox(0,0){\tiny$4$}}%
% STR 2 0 3 0 Black White  
% 4 2636 2235 2636 2396 5 0 0 0
% \tiny$5$
\put(26.3600,-23.9600){\makebox(0,0){\tiny$5$}}%
% STR 2 0 3 0 Black White  
% 4 2956 2235 2956 2396 5 0 0 0
% \tiny$6$
\put(29.5600,-23.9600){\makebox(0,0){\tiny$6$}}%
% STR 2 0 3 0 Black White  
% 4 3276 2235 3276 2396 5 0 0 0
% \tiny$7$
\put(32.7600,-23.9600){\makebox(0,0){\tiny$7$}}%
% STR 2 0 3 0 Black White  
% 4 2636 2875 2636 3036 5 0 0 0
% \tiny$8$
\put(26.3600,-30.3600){\makebox(0,0){\tiny$8$}}%
% STR 2 0 3 0 Black White  
% 4 1096 2555 1096 2716 5 0 0 0
% \large$E_8:$
\put(10.9600,-27.1600){\makebox(0,0){\large$E_8:$}}%
% DOT 0 0 3 0 Black White  
% 8 2720 1440 3040 1440 3360 1440 3680 1440 4000 1440 4320 1440 3360 1760 3360 1760
% 
\special{pn 4}%
\special{sh 1}%
\special{ar 2720 1440 16 16 0 6.2831853}%
\special{sh 1}%
\special{ar 3040 1440 16 16 0 6.2831853}%
\special{sh 1}%
\special{ar 3360 1440 16 16 0 6.2831853}%
\special{sh 1}%
\special{ar 3680 1440 16 16 0 6.2831853}%
\special{sh 1}%
\special{ar 4000 1440 16 16 0 6.2831853}%
\special{sh 1}%
\special{ar 4320 1440 16 16 0 6.2831853}%
\special{sh 1}%
\special{ar 3360 1760 16 16 0 6.2831853}%
\special{sh 1}%
\special{ar 3360 1760 16 16 0 6.2831853}%
\end{picture}}%
\vspace{10pt}
\caption{Simply-laced Dynkin diagrams.}\label{fig:dynkin index1}
\end{figure}
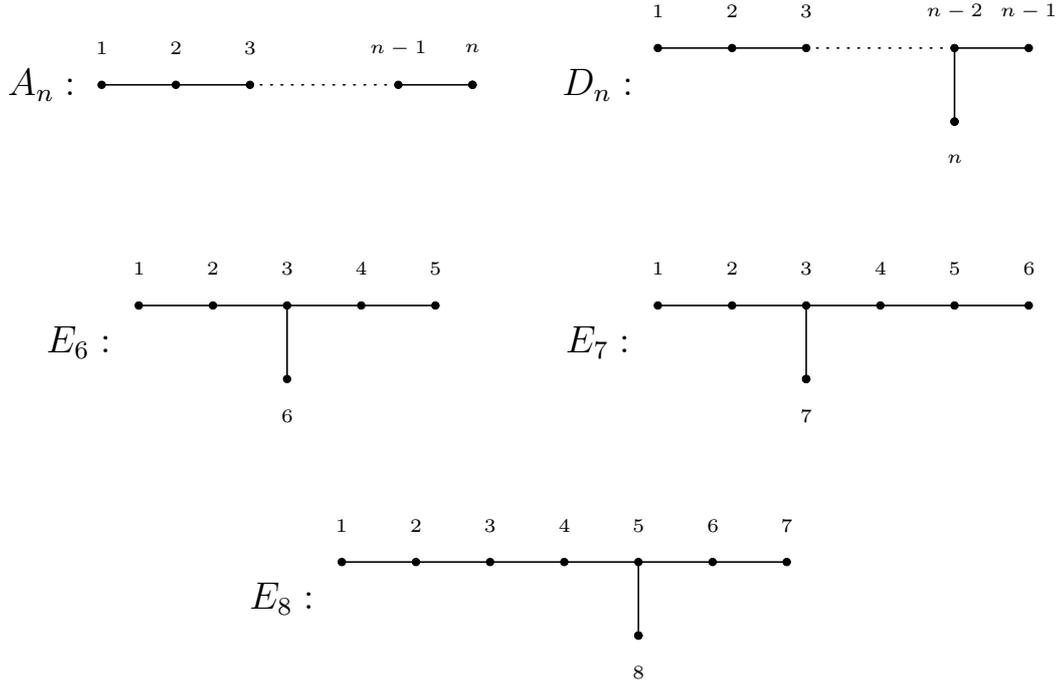
%%%%%%%%%%

\begin{rema}[{\cite[Lemma 11.1.18]{Gree}} and Remark \ref{bruhat}]\label{strong,weak}
Assume that $\lambda$ is minuscule.
For $\mu,\mu'\in W\lambda$, $\mu\le_s \mu'$ if and only if $\mu\le_w \mu'$.
Therefore, $(W\lambda,\le_s)$ and $(W\lambda,\le_w)$ are order isomorphic.
\end{rema}

\begin{prop}[{\cite[Section 14]{Proc1}}]\label{iso filter,orbit}
Assume that $\Lieg$ is simply-laced.
Let $\lambda$ be a minuscule weight of $\Lieg$.
There exists a connected, self-dual d-complete poset $P_{\lambda}$ such that $(W\lambda,\le_s)$ and $(\mathcal{F}(P_{\lambda}),\subseteq)$ are isomorphic, as posets (see also Table \ref{tab:iso filter,orbit}).
\end{prop}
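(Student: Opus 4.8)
The plan is to transport $\le_s$ to the weak order, realize the resulting poset as the lattice of order filters of a \emph{heap}, and then identify that heap with a connected self-dual d-complete poset by going through the short list of minuscule weights.

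By Remark~\ref{strong,weak} the posets $(W\lambda,\le_s)$ and $(W\lambda,\le_w)$ coincide, and by Remark~\ref{bruhat} the latter is isomorphic to $W/W_\lambda$ with the weak order, minimal-length coset representatives being chosen. So it suffices to produce a connected self-dual d-complete poset $P_\lambda$ together with an order isomorphism $(W/W_\lambda,\le_w)\cong(\mathcal{F}(P_\lambda),\subseteq)$.

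The combinatorial core is as follows. Since $\lambda$ is minuscule, $\mu(h_i)\in\{-1,0,1\}$ for all $\mu\in W\lambda$ and $i\in I$; hence whenever $\mu\ne\lambda$ one can pick $i$ with $\mu(h_i)=-1$, and then $s_i\mu=\mu+\alpha_i\in W\lambda$ lies strictly above $\mu$ in $\le_w$ with a strictly shorter coset representative. Inducting on length, every $w\in W/W_\lambda$ is $\lambda$-minuscule, hence by Stembridge's theorem \cite{Stem} fully commutative: its reduced words are obtained from one another by commutations alone, so its \emph{heap} --- the set of letters of a reduced word $s_{i_1}\cdots s_{i_\ell}$, partially ordered by the transitive closure of ``the $p$th letter precedes the $q$th whenever $p<q$ and $a_{i_p i_q}\ne 0$'' --- is well defined up to isomorphism. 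Let $w_\lambda$ be the minimal representative of $w_0W_\lambda$; since every $\mu\in W\lambda$ with $\mu\ne w_0\lambda$ is not antidominant, one can always subtract a simple root to descend, so there is a saturated $\le_w$-chain from $\mu$ down to $w_0\lambda$, and therefore $w_\lambda$ is the unique maximal element of $(W/W_\lambda,\le_w)$ with $w\le_w w_\lambda$ for every $w$. Set $P_\lambda$ to be the heap of $w_\lambda$, each box $x$ carrying the colour $i_x\in I$ of the corresponding letter. For $\mu\in W\lambda$ let $f(\mu)$ be the set of boxes deleted in passing from $\lambda$ down to $\mu$ along a saturated chain of $(W\lambda,\le_w)$; minusculeness makes $f(\mu)$ independent of the chosen chain, $f(\lambda)=\emptyset$, $f(w_0\lambda)=P_\lambda$, each covering step $\mu\to s_i\mu$ adjoins a single box of colour $i$, and (orienting $P_\lambda$ appropriately) $f(\mu)$ is always an order filter. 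A direct check shows $f$ is the desired order isomorphism $(W\lambda,\le_w)\cong(\mathcal{F}(P_\lambda),\subseteq)$; in particular $(W\lambda,\le_s)$ is a distributive lattice.

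It remains to verify that $P_\lambda$ is connected, self-dual, and d-complete. Self-duality is immediate: the bijection $\mu\mapsto w_0\mu$ of $W\lambda$ onto itself reverses every covering relation of $\le_w$ --- because $w_0$ sends $\Delta_+$ to $\Delta_-$, it interchanges the highest weight $\lambda$ (the minimum) with the lowest weight $w_0\lambda$ (the maximum) --- and, being an involution, it is therefore an anti-automorphism of $(W\lambda,\le_w)$, hence of $P_\lambda$. Connectedness and the axioms (D1)--(D3) of Definition~\ref{d-complete} I would check case by case over the minuscule pairs of Table~\ref{tab:minuscule}: for $\Lieg$ of type $A_n$ and $\lambda=\Lambda_k$, the heap $P_\lambda$ is the rectangle $Y_{k,\,n+1-k}$; for $D_n$ and $\lambda=\Lambda_1$ it is the double-tailed diamond $d_n(1)$, and for $\lambda=\Lambda_{n-1}$ or $\Lambda_n$ it is the shifted staircase $SY_{n-1}$; for $E_6$ (with $\Lambda_1$ or $\Lambda_5$) and for $E_7$ (with $\Lambda_6$) one obtains, respectively, the two exceptional posets of Figure~\ref{fig:self-dual}. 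In each case one lists the intervals isomorphic to some $d_k^{-}(1)$ and checks that they complete uniquely to $d_k$-intervals meeting the requirements of (D1)--(D3). This verification --- essentially \cite[Section~14]{Proc1} --- is the main obstacle; combined with the isomorphism and the self-duality above, it yields the poset $P_\lambda$ recorded in Table~\ref{tab:iso filter,orbit} and proves the proposition.
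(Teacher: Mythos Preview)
The paper does not prove this proposition; it is cited from \cite[Section~14]{Proc1} without argument, so there is no in-paper proof to compare against. Your outline is the standard one---reduce to the weak order via Remark~\ref{strong,weak}, realise $(W/W_\lambda,\le_w)$ as the lattice of order filters of the heap of the longest minimal coset representative $w_\lambda$ (using full commutativity of $\lambda$-minuscule elements), and then verify d-completeness case by case---and it is correct as a sketch, with the hard verification honestly deferred to Proctor.

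Two small points. First, your self-duality step is slightly compressed: what $\mu\mapsto w_0\mu$ gives directly is an anti-automorphism of the \emph{distributive lattice} $(W\lambda,\le_w)\cong\mathcal{F}(P_\lambda)$; to conclude $P_\lambda\cong P_\lambda^{*}$ you still need Birkhoff's representation theorem (the poset of join-irreducibles of $\mathcal{F}(P)$ recovers $P$, and $\mathcal{F}(P)^{*}\cong\mathcal{F}(P^{*})$ via complementation). Second, your $D_n$ assignments ($\Lambda_1\mapsto d_n(1)$ and $\Lambda_{n-1},\Lambda_n\mapsto SY_{n-1}$) are swapped relative to Table~\ref{tab:iso filter,orbit}. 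Your version is the Bourbaki convention and in fact agrees with the paper's own Table~\ref{tab:iso fold filter,orbit}, so the discrepancy reflects a labeling convention (which end of the $D_n$ diagram carries the label~$1$) rather than a mathematical error on your part.
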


%%%%%%%%%%
\begin{table}[h]
\begin{center}
\begin{tabular}{|c|c|c|}
\hline
$\Lieg$&minuscule weight $\lambda$&corresponding d-complete poset $P_{\lambda}$\\
\hline
$A_n$&$\Lambda_i (1\le i\le n)$&$Y_{i,n-i+1}$ (Shape)\\
\hline
$D_n$&$\Lambda_1$&$SY_{n-1}$ (Shifted Shape)\\
\hline
$D_n$&$\Lambda_{n-1},\Lambda_n$&$d_n(1)$ (Inset)\\
\hline
$E_6$&$\Lambda_1,\Lambda_5$&Swivel\\
\hline
$E_7$&$\Lambda_6$&Bat\\
\hline
\end{tabular}
\caption{The d-complete posets $P_{\lambda}$ corresponding to minuscule weights $\lambda$.}\label{tab:iso filter,orbit}
\end{center}
\end{table}
%%%%%%%%%%

Keep the setting in Proposition \ref{iso filter,orbit}, with $\lambda=\Lambda_i$ for some $i\in I$ such that $\Lambda_i$ is minuscule.
We know from \cite[Proposition 8.6]{Proc2} that the graph obtained from the Hasse diagram of the top tree\ $T_{P_{\lambda}}$ of $P_\lambda$ by replacing each allow by an edge is identical to the Dynkin diagram of $\Lieg$; in particular, $\#I=\#T_{P_{\lambda}}$.
By Proposition \ref{colored d-comp}, we can obtain the colored poset\ $(P_{\lambda},\le,\kappa,I)$ such that $\kappa|_{T_{P_{\lambda}}}:T_{P_{\lambda}}\rightarrow I$ is the graph isomorphism and the maximum element of $P_{\lambda}$ (notice that it is contained in $T_{P_{\lambda}})$ is sent to the $i$ under the map $\kappa$.
We call $(P_{\lambda},\le,\kappa,I)$ the \emph{$I$-colored d-complete poset} for the minuscule weight $\lambda$.

\begin{prop}[{\cite[Proposition 9.1]{Proc2}}]\label{color filter,orbit}
Keep the notation and setting in Proposition \ref{iso filter,orbit}.
Let $(P_{\lambda},\le,\kappa,I)$ be the $I$-colored d-complete poset.
There exists a unique order isomorphism $f:(W\lambda,\le_s)\overset{\sim}{\rightarrow} (\mathcal{F}(P_{\lambda}),\subseteq)$ such that for $\mu\in W\lambda$ and $\ i\in I$, there exists the cover relation $\mu\rightarrow s_i\mu$ in $W\lambda$ if and only if $f(s_i(\mu))\setminus f(\mu)$ consists of one element having the color $i$.
\end{prop}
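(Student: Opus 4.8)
The plan is to prove uniqueness first and then to construct $f$ explicitly by propagating colours along saturated chains starting from the minimum of $W\lambda$. For \emph{uniqueness}, suppose $f$ satisfies the conditions in the statement. Since $P_\lambda$ is connected it has a unique maximum (Proposition~\ref{property d-comp}(1)), and $W\lambda$ has the unique minimum $\lambda$ (it is dominant), so $f(\lambda)=\emptyset$. Given $\mu\in W\lambda$, choose a saturated chain $\lambda=\mu_0\to\mu_1\to\cdots\to\mu_k=\mu$ in $(W\lambda,\le_w)$ and write $\mu_{t+1}=s_{i_t}(\mu_t)$, so $\mu_t(h_{i_t})=1$ by minusculity. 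For an order filter $F$ and a colour $i$ there is \emph{at most one} $x\in P_\lambda$ with $\kappa(x)=i$ that is maximal in $P_\lambda\setminus F$, since two such would be comparable by Proposition~\ref{property c-d-comp}(3), contradicting the maximality of each. Hence the requirement that $f(\mu_{t+1})\setminus f(\mu_t)$ be a singleton of colour $i_t$ forces $f(\mu_{t+1})=f(\mu_t)\cup\{x\}$ for that unique $x$, and by induction along the chain $f(\mu)$ is determined. As every element of $W\lambda$ lies on such a chain, $f$ is unique.

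For \emph{existence}, I would use the same recipe to \emph{define} $f$: put $f(\lambda):=\emptyset$, and whenever $f(\mu)$ has been defined and $\mu(h_i)=1$, put $f(s_i(\mu)):=f(\mu)\cup\{x\}$ with $x$ the unique maximal element of colour $i$ in $P_\lambda\setminus f(\mu)$ (this set is automatically an order filter, since everything above $x$ already lies in $f(\mu)$). In the language of Section~\ref{Sec:color} this reads $f(s_i(\mu))=S_i(f(\mu))$ whenever $\mu(h_i)=1$ — the ``enlarging'' case of the at-most-one-element change recorded in Lemma~\ref{property f-invo,c-d-comp}. Before this is legitimate one must check that such an $x$ always exists and that $f$ is well defined, and afterwards that $f$ has all the stated properties.

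The heart of the matter — and the step I expect to be the main obstacle — is a \emph{dictionary} relating the two sides, to be proved by induction on the rank $\#f(\mu)$ alongside the construction: for every $\mu\in W\lambda$ and $i\in I$ one has $\mu(h_i)=1$ iff $P_\lambda\setminus f(\mu)$ has a maximal element of colour $i$; $\mu(h_i)=-1$ iff $f(\mu)$ has a minimal element of colour $i$; and $\mu(h_i)=0$ otherwise. The base case $\mu=\lambda$ holds because $\lambda(h_j)=\delta_{ij}$ for $\lambda=\Lambda_i$, while the only colour admitting a maximal element of $P_\lambda$ is the colour $i$ attached to the maximum of $P_\lambda$ in the $I$-coloured d-complete poset. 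For the inductive step, passing from $\mu$ to $s_i(\mu)$ along colour $i$, one must match $(s_i\mu)(h_j)=\mu(h_j)-a_{ji}$ against the change in the local colour-$j$ structure of $f(\mu)$ caused by adjoining the colour-$i$ element $x$; the key point is that adjoining $x$ affects colour-$j$ extendability precisely when $j$ is adjacent to $i$ in the top tree $T_{P_\lambda}$, i.e.\ exactly when $a_{ji}=-1$. This is where the d-completeness axioms and the $d_k$-interval structure (Propositions~\ref{property d-comp} and~\ref{property c-d-comp}) are used in an essential way; in effect one re-derives, inside $P_\lambda$, the local relations governing the $W$-action near a fundamental weight.

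Granting the dictionary, the remaining steps are routine. For well-definedness, any two saturated chains from $\lambda$ to $\mu$ differ by ``diamond moves'', since $(W\lambda,\le_w)$ is a finite distributive lattice (Proposition~\ref{iso filter,orbit} and Remark~\ref{strong,weak}); for a diamond with $\mu'(h_i)=\mu'(h_j)=1$ and $i\neq j$ — whence $i,j$ are non-adjacent by minusculity and $s_is_j(\mu')=s_js_i(\mu')=:\mu''$ — both routes produce $f(\mu')\cup\{x_i,x_j\}$, because the colour-$j$ maximal element $x_j$ of $P_\lambda\setminus f(\mu')$ remains maximal, and remains the unique such element, in the smaller set $P_\lambda\setminus(f(\mu')\cup\{x_i\})$, again by Proposition~\ref{property c-d-comp}(3). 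For bijectivity, $f$ is rank-preserving hence injective, and its image contains $\emptyset$ and is closed under adjoining a maximal-complement element of an admissible colour; since every order filter is built up from $\emptyset$ in this way and the dictionary matches the admissible colours on the two sides, $f$ is onto. Since $f$ and $f^{-1}$ preserve covers by construction, $f$ is an order isomorphism $(W\lambda,\le_w)\overset{\sim}{\rightarrow}(\mathcal{F}(P_\lambda),\subseteq)$, which is also one for $\le_s$ by Remark~\ref{strong,weak}; the colour property is built into the construction. This yields existence and, together with the first part, proves the proposition.
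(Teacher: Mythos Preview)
The paper does not supply a proof of this proposition: it is quoted verbatim from \cite[Proposition~9.1]{Proc2} and used as a black box, so there is no ``paper's own proof'' to compare your attempt against.

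As for your sketch itself, the overall architecture is the standard one and is sound. Your uniqueness argument is complete and correct. For existence, the recursive definition $f(s_i\mu)=S_i(f(\mu))$ together with the ``dictionary'' ($\mu(h_i)=1,-1,0$ matching, respectively, the existence of a colour-$i$ maximal element of $P_\lambda\setminus f(\mu)$, a colour-$i$ minimal element of $f(\mu)$, or neither) is exactly the right invariant to carry through the induction, and your diamond-move argument for well-definedness is fine (the observation that $\mu'(h_i)=\mu'(h_j)=1$ forces $a_{ij}=0$ in the minuscule, simply-laced setting is the crucial point, and you have it). The one place where your write-up is genuinely incomplete is the inductive step of the dictionary: the assertion that ``adjoining $x$ affects colour-$j$ extendability precisely when $a_{ji}=-1$'' hides a nontrivial case analysis. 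One must show, for each $j$ adjacent to $i$, that the passage $\mu(h_j)\mapsto\mu(h_j)+1$ is mirrored exactly on the filter side --- e.g.\ that a colour-$j$ minimal element of $f(\mu)$ ceases to be minimal after adjoining $x$ (the case $-1\mapsto 0$), or that a new colour-$j$ maximal element appears in the complement (the case $0\mapsto 1$) --- and conversely that nothing changes for non-adjacent $j$. This is where axioms (D1)--(D3) and the $d_k$-interval structure of Proposition~\ref{property d-comp}(2) do real work, and in Proctor's original argument it is precisely this verification that occupies the bulk of the proof. You flag it as ``the main obstacle'' but do not carry it out; until you do, the existence half remains a plausible outline rather than a proof.
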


\begin{exam}\label{exam color filter,orbit}
Let $\Lieg$ be of type $A_5$, and $\lambda=\Lambda_{2}$; in this case, the corresponding (connected, self-dual) d-complete poset $P_{\Lambda_2}$ is $Y_{2,4}$.
Let $(P_{\Lambda_2},\le,\kappa,I)$ be the $I$-colored d-complete poset, with the coloring $\kappa$ as in Figure \ref{fig:c-d-comp}.
The Hasse diagrams of $(W\Lambda_2,\le_s)$ and $(\mathcal{F}(P_{\Lambda_2}),\subseteq)$ are given in Figure \ref{fig:color filter,orbit} below:
\end{exam}

%%%%%%%%%%
\begin{figure}[h]
\centering
\vspace{10pt}
%WinTpicVersion4.32a
{\unitlength 0.1in%
\begin{picture}(21.3700,32.0000)(-4.3700,-33.2700)%
% STR 2 0 3 0 Black White  
% 4 200 3350 200 3400 5 0 0 0
% \scriptsize$(0,1,0,0,0)$
\put(2.0000,-34.0000){\makebox(0,0){\scriptsize$(0,1,0,0,0)$}}%
% STR 2 0 3 0 Black White  
% 4 600 2950 600 3000 5 0 0 0
% \scriptsize$(1,-1,1,0,0)$
\put(6.0000,-30.0000){\makebox(0,0){\scriptsize$(1,-1,1,0,0)$}}%
% STR 2 0 3 0 Black White  
% 4 1000 2550 1000 2600 5 0 0 0
% \scriptsize$(1,0,-1,1,0)$
\put(10.0000,-26.0000){\makebox(0,0){\scriptsize$(1,0,-1,1,0)$}}%
% STR 2 0 3 0 Black White  
% 4 1400 2150 1400 2200 5 0 0 0
% \scriptsize$(1,0,0,-1,1)$
\put(14.0000,-22.0000){\makebox(0,0){\scriptsize$(1,0,0,-1,1)$}}%
% STR 2 0 3 0 Black White  
% 4 1800 1750 1800 1800 5 0 0 0
% \scriptsize$(1,0,0,0,-1)$
\put(18.0000,-18.0000){\makebox(0,0){\scriptsize$(1,0,0,0,-1)$}}%
% STR 2 0 3 0 Black White  
% 4 1400 1350 1400 1400 5 0 0 0
% \scriptsize$(-1,1,0,0,-1)$
\put(14.0000,-14.0000){\makebox(0,0){\scriptsize$(-1,1,0,0,-1)$}}%
% STR 2 0 3 0 Black White  
% 4 1000 950 1000 1000 5 0 0 0
% \scriptsize$(0,-1,1,0,-1)$
\put(10.0000,-10.0000){\makebox(0,0){\scriptsize$(0,-1,1,0,-1)$}}%
% STR 2 0 3 0 Black White  
% 4 600 550 600 600 5 0 0 0
% \scriptsize$(0,0,-1,1,-1)$
\put(6.0000,-6.0000){\makebox(0,0){\scriptsize$(0,0,-1,1,-1)$}}%
% STR 2 0 3 0 Black White  
% 4 200 150 200 200 5 0 0 0
% \scriptsize$(0,0,0,-1,0)$
\put(2.0000,-2.0000){\makebox(0,0){\scriptsize$(0,0,0,-1,0)$}}%
% STR 2 0 3 0 Black White  
% 4 200 950 200 1000 5 0 0 0
% \scriptsize$(0,0,-1,0,1)$
\put(2.0000,-10.0000){\makebox(0,0){\scriptsize$(0,0,-1,0,1)$}}%
% STR 2 0 3 0 Black White  
% 4 600 1350 600 1400 5 0 0 0
% \scriptsize$(0,-1,1,-1,1)$
\put(6.0000,-14.0000){\makebox(0,0){\scriptsize$(0,-1,1,-1,1)$}}%
% STR 2 0 3 0 Black White  
% 4 1000 1750 1000 1800 5 0 0 0
% \scriptsize$(-1,1,0,-1,1)$
\put(10.0000,-18.0000){\makebox(0,0){\scriptsize$(-1,1,0,-1,1)$}}%
% STR 2 0 3 0 Black White  
% 4 200 1750 200 1800 5 0 0 0
% \scriptsize$(0,-1,0,1,0)$
\put(2.0000,-18.0000){\makebox(0,0){\scriptsize$(0,-1,0,1,0)$}}%
% STR 2 0 3 0 Black White  
% 4 600 2150 600 2200 5 0 0 0
% \scriptsize$(-1,1,-1,1,0)$
\put(6.0000,-22.0000){\makebox(0,0){\scriptsize$(-1,1,-1,1,0)$}}%
% STR 2 0 3 0 Black White  
% 4 200 2550 200 2600 5 0 0 0
% \scriptsize$(-1,0,1,0,0)$
\put(2.0000,-26.0000){\makebox(0,0){\scriptsize$(-1,0,1,0,0)$}}%
% LINE 2 0 3 0 Black White  
% 40 300 300 500 500 700 700 900 900 1100 1100 1300 1300 1500 1500 1700 1700 1700 1900 1500 2100 1300 2300 1100 2500 900 2700 700 2900 500 3100 300 3300 500 2900 300 2700 300 2500 500 2300 500 2100 300 1900 300 1700 500 1500 500 1300 300 1100 300 900 500 700 700 1300 900 1100 700 1500 900 1700 1100 1700 1300 1500 1100 1900 1300 2100 900 1900 700 2100 700 2300 900 2500
% 
\special{pn 8}%
\special{pa 300 300}%
\special{pa 500 500}%
\special{fp}%
\special{pa 700 700}%
\special{pa 900 900}%
\special{fp}%
\special{pa 1100 1100}%
\special{pa 1300 1300}%
\special{fp}%
\special{pa 1500 1500}%
\special{pa 1700 1700}%
\special{fp}%
\special{pa 1700 1900}%
\special{pa 1500 2100}%
\special{fp}%
\special{pa 1300 2300}%
\special{pa 1100 2500}%
\special{fp}%
\special{pa 900 2700}%
\special{pa 700 2900}%
\special{fp}%
\special{pa 500 3100}%
\special{pa 300 3300}%
\special{fp}%
\special{pa 500 2900}%
\special{pa 300 2700}%
\special{fp}%
\special{pa 300 2500}%
\special{pa 500 2300}%
\special{fp}%
\special{pa 500 2100}%
\special{pa 300 1900}%
\special{fp}%
\special{pa 300 1700}%
\special{pa 500 1500}%
\special{fp}%
\special{pa 500 1300}%
\special{pa 300 1100}%
\special{fp}%
\special{pa 300 900}%
\special{pa 500 700}%
\special{fp}%
\special{pa 700 1300}%
\special{pa 900 1100}%
\special{fp}%
\special{pa 700 1500}%
\special{pa 900 1700}%
\special{fp}%
\special{pa 1100 1700}%
\special{pa 1300 1500}%
\special{fp}%
\special{pa 1100 1900}%
\special{pa 1300 2100}%
\special{fp}%
\special{pa 900 1900}%
\special{pa 700 2100}%
\special{fp}%
\special{pa 700 2300}%
\special{pa 900 2500}%
\special{fp}%
% STR 2 0 3 0 Black White  
% 4 450 250 450 350 5 0 0 0
% $s_4$
\put(4.5000,-3.5000){\makebox(0,0){$s_4$}}%
% STR 2 0 3 0 Black White  
% 4 350 1450 350 1550 5 0 0 0
% $s_4$
\put(3.5000,-15.5000){\makebox(0,0){$s_4$}}%
% STR 2 0 3 0 Black White  
% 4 750 1850 750 1950 5 0 0 0
% $s_4$
\put(7.5000,-19.5000){\makebox(0,0){$s_4$}}%
% STR 2 0 3 0 Black White  
% 4 1150 2250 1150 2350 5 0 0 0
% $s_4$
\put(11.5000,-23.5000){\makebox(0,0){$s_4$}}%
% STR 2 0 3 0 Black White  
% 4 850 650 850 750 5 0 0 0
% $s_3$
\put(8.5000,-7.5000){\makebox(0,0){$s_3$}}%
% STR 2 0 3 0 Black White  
% 4 450 1050 450 1150 5 0 0 0
% $s_3$
\put(4.5000,-11.5000){\makebox(0,0){$s_3$}}%
% STR 2 0 3 0 Black White  
% 4 350 2250 350 2350 5 0 0 0
% $s_3$
\put(3.5000,-23.5000){\makebox(0,0){$s_3$}}%
% STR 2 0 3 0 Black White  
% 4 750 2650 750 2750 5 0 0 0
% $s_3$
\put(7.5000,-27.5000){\makebox(0,0){$s_3$}}%
% STR 2 0 3 0 Black White  
% 4 1250 1050 1250 1150 5 0 0 0
% $s_2$
\put(12.5000,-11.5000){\makebox(0,0){$s_2$}}%
% STR 2 0 3 0 Black White  
% 4 850 1450 850 1550 5 0 0 0
% $s_2$
\put(8.5000,-15.5000){\makebox(0,0){$s_2$}}%
% STR 2 0 3 0 Black White  
% 4 450 1850 450 1950 5 0 0 0
% $s_2$
\put(4.5000,-19.5000){\makebox(0,0){$s_2$}}%
% STR 2 0 3 0 Black White  
% 4 350 3050 350 3150 5 0 0 0
% $s_2$
\put(3.5000,-31.5000){\makebox(0,0){$s_2$}}%
% STR 2 0 3 0 Black White  
% 4 1650 1450 1650 1550 5 0 0 0
% $s_1$
\put(16.5000,-15.5000){\makebox(0,0){$s_1$}}%
% STR 2 0 3 0 Black White  
% 4 1250 1850 1250 1950 5 0 0 0
% $s_1$
\put(12.5000,-19.5000){\makebox(0,0){$s_1$}}%
% STR 2 0 3 0 Black White  
% 4 850 2250 850 2350 5 0 0 0
% $s_1$
\put(8.5000,-23.5000){\makebox(0,0){$s_1$}}%
% STR 2 0 3 0 Black White  
% 4 450 2650 450 2750 5 0 0 0
% $s_1$
\put(4.5000,-27.5000){\makebox(0,0){$s_1$}}%
% STR 2 0 3 0 Black White  
% 4 350 650 350 750 5 0 0 0
% $s_5$
\put(3.5000,-7.5000){\makebox(0,0){$s_5$}}%
% STR 2 0 3 0 Black White  
% 4 750 1050 750 1150 5 0 0 0
% $s_5$
\put(7.5000,-11.5000){\makebox(0,0){$s_5$}}%
% STR 2 0 3 0 Black White  
% 4 1150 1450 1150 1550 5 0 0 0
% $s_5$
\put(11.5000,-15.5000){\makebox(0,0){$s_5$}}%
% STR 2 0 3 0 Black White  
% 4 1550 1850 1550 1950 5 0 0 0
% $s_5$
\put(15.5000,-19.5000){\makebox(0,0){$s_5$}}%
\end{picture}}%
\qquad\qquad
\input{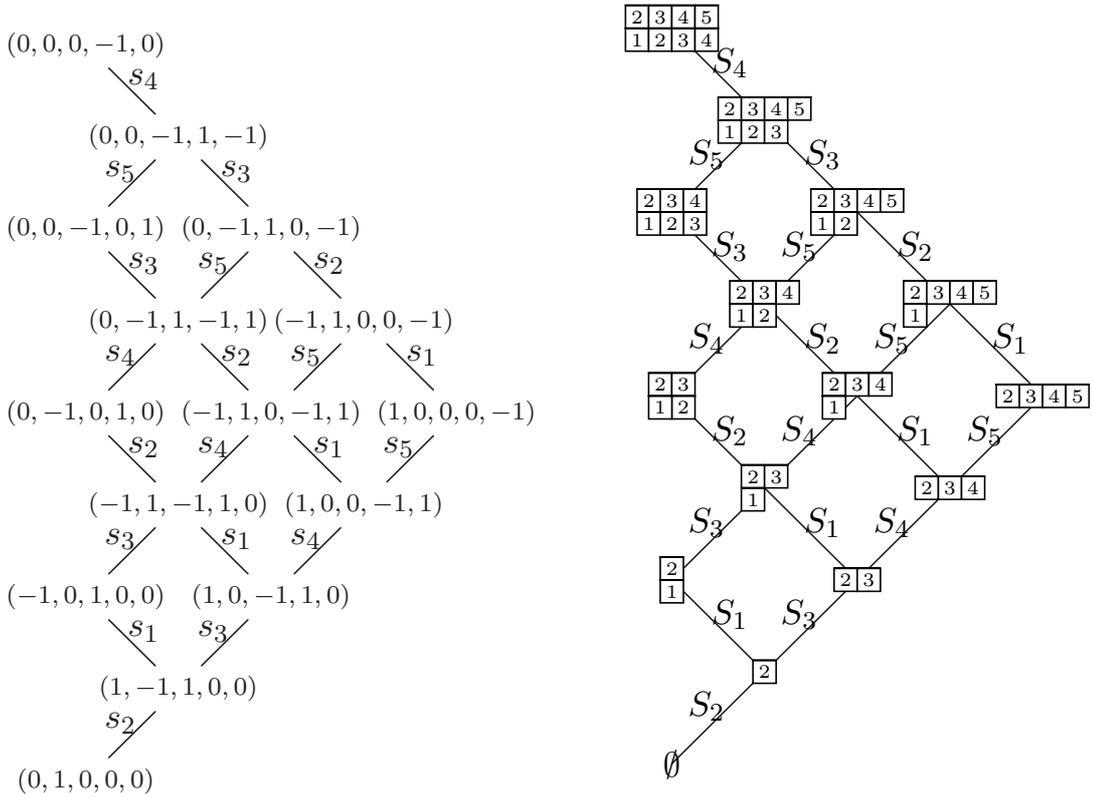}
\vspace{10pt}
\caption{$(W\Lambda_2,\le_s)$ and $(\mathcal{F}(P_{\Lambda_2}),\subseteq)$ of type $A_5$}\label{fig:color filter,orbit}
\end{figure}
%%%%%%%%%%

The next corollary follows from Remark \ref{S-order,inclusion} and Proposition \ref{color filter,orbit}.

\begin{coro}\label{action filter,orbit}
Assume that $\Lieg$ is simply-laced.
Let $\lambda$ be a minuscule weight of $\Lieg$, and let $(P_{\lambda},\le)$ be the d-complete poset such that $(\mathcal{F}(P_{\lambda}),\subseteq)$ is isomorphic to $(W\lambda,\le_s)$ (see Proposition \ref{iso filter,orbit}).
Let $(P_{\lambda},\le,\kappa,I)$ be the $I$-colored d-complete poset, and let $f:(W\lambda,\le_s)\overset{\sim}{\rightarrow} (\mathcal{F}(P_{\lambda}),\subseteq)$ be the order isomorphism in Proposition \ref{color filter,orbit}.
For $\mu\in W\lambda$ and $i\in I $,
$$f(s_i(\mu))=S_i(f(\mu)).$$
\end{coro}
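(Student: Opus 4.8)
The plan is to reduce the identity $f(s_i(\mu)) = S_i(f(\mu))$ to the characterization of $f$ in Proposition \ref{color filter,orbit} together with the fact that $S_i$ is an involution (Lemma \ref{property f-invo}(3)). First I would observe that since $\lambda$ is minuscule, the orders $\le_s$ and $\le_w$ on $W\lambda$ coincide (Remark \ref{strong,weak}), so cover relations in $(W\lambda,\le_s)$ are exactly the relations $\mu \to s_i(\mu)$ with $\mu(h_i) > 0$; the element $s_i(\mu)$ is always either $\mu$, or covers $\mu$, or is covered by $\mu$, because $(s_i\mu)(h_i) = -\mu(h_i) \in \{-1,0,1\}$. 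Fix $\mu \in W\lambda$ and $i \in I$. If $\mu(h_i) = 0$, then $s_i(\mu) = \mu$, and I must check that $S_i(f(\mu)) = f(\mu)$ as well; this should follow from the characterization of $f$, since if $S_i(f(\mu)) \neq f(\mu)$ then by Lemma \ref{property f-invo,c-d-comp} their symmetric difference is a single element of color $i$, which would force a cover relation between $\mu$ and $s_i(\mu)$ in $W\lambda$ via Proposition \ref{color filter,orbit}, contradicting $s_i(\mu) = \mu$.

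Next, suppose $\mu(h_i) > 0$, so that $\mu \to s_i(\mu)$ is a cover relation. By Proposition \ref{color filter,orbit}, $f(s_i(\mu)) \setminus f(\mu)$ consists of a single element $x$ with $\kappa(x) = i$, and in particular $f(\mu) \subsetneq f(s_i(\mu))$. I would now argue that $S_i(f(\mu)) = f(s_i(\mu))$. Since $f(\mu) \subsetneq f(s_i(\mu))$ and $f(s_i(\mu)) \setminus f(\mu) \subseteq \kappa^{-1}(\{i\})$, the definition of $A_i$ gives $A_i(f(\mu)) \supseteq f(s_i(\mu)) \supsetneq f(\mu)$; in particular $A_i(f(\mu)) \neq f(\mu)$, so by Remark \ref{remark f-invo} it suffices to show $R_i(f(\mu)) = f(\mu)$, whence $S_i(f(\mu)) = A_i(f(\mu))$, and then to pin down $A_i(f(\mu))$. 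For $R_i(f(\mu)) = f(\mu)$: if not, then $f(\mu) \setminus R_i(f(\mu))$ contains an element of color $i$, and one can run the characterization of $f$ in reverse to produce a weight $\mu'$ below $\mu$ with $\mu \to \mu' = s_i(\mu)$ a cover; but $\mu(h_i)>0$ already forces $\mu < s_i(\mu)$, a contradiction. Thus $S_i(f(\mu)) = A_i(f(\mu))$. To finish, I need $A_i(f(\mu)) = f(s_i(\mu))$, i.e. that one cannot add more than one color-$i$ box; this is exactly Lemma \ref{property f-invo,c-d-comp} applied to the order filter $f(\mu)$ (or rather its content), which says the symmetric difference of $f(\mu)$ and $S_i(f(\mu))$ has at most one element — hence $A_i(f(\mu)) \setminus f(\mu)$ is a single box of color $i$, and it must equal $f(s_i(\mu)) \setminus f(\mu)$ since both are the unique such filter above $f(\mu)$. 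The case $\mu(h_i) < 0$ is handled by applying the case just done to $s_i(\mu)$ in place of $\mu$ (note $(s_i\mu)(h_i) = -\mu(h_i) > 0$) and using that $s_i$ is an involution on $W\lambda$ and $S_i$ is an involution on $\mathcal{F}(P_\lambda)$ by Lemma \ref{property f-invo}(3): $f(s_i(\mu)) = S_i(f(s_i(s_i\mu))) = S_i(S_i(f(s_i\mu)))$... more directly, from $f(s_i(s_i\mu)) = S_i(f(s_i\mu))$ we get $f(\mu) = S_i(f(s_i\mu))$, hence $S_i(f(\mu)) = S_i(S_i(f(s_i\mu))) = f(s_i\mu)$.

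The main obstacle I anticipate is the bookkeeping in the $\mu(h_i) = 0$ case and, more seriously, justifying cleanly that $A_i(f(\mu))$ adds \emph{exactly} the box $x = f(s_i(\mu))\setminus f(\mu)$ and nothing more — a priori $A_i$ is defined as a union over \emph{all} filters differing from $f(\mu)$ only in color $i$, so one must invoke d-completeness (through Lemma \ref{property f-invo,c-d-comp} and Proposition \ref{property c-d-comp}(3), which say color-$i$ elements form a chain and $S_i$ moves at most one box) to rule out a longer chain of color-$i$ boxes being simultaneously addable. Once that is in hand, the symmetry argument reducing $\mu(h_i)<0$ to $\mu(h_i)>0$ is routine, and the whole statement follows. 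It is worth noting that Remark \ref{S-order,inclusion} already tells us $(\mathcal{F}(P_\lambda),\subseteq)$ and $(\mathcal{F}(P_\lambda),\unlhd)$ coincide, so there is no tension between the two descriptions of $f$ as an isomorphism of posets.
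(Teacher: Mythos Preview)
Your proposal is correct and follows the same route the paper intends: the paper records only that the corollary ``follows from Remark \ref{S-order,inclusion} and Proposition \ref{color filter,orbit},'' and your case analysis on the sign of $\mu(h_i)$, together with Lemma \ref{property f-invo,c-d-comp} and the involution property of $S_i$, is exactly the unpacking of that one-line justification. The only cosmetic point is that in your $\mu(h_i)=0$ and $R_i(f(\mu))=f(\mu)$ arguments you should phrase things as ``the cover relation forces $\nu=s_i\mu$ via the color of the added box,'' rather than asserting the cover is with $s_i\mu$ from the outset; once stated that way the argument is clean.
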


For $F \in \mathcal{F}(P_{\lambda})$ and $i \in I$, we define $c_i(F) := \#\{x\in F \mid \kappa(x) = i\}$.
Because $\lambda$ is minuscule, we see that if there exists the cover relation $\mu\rightarrow s_i(\mu)$ in $W\lambda$, then $\mu(h_i) = 1$ and $s_i(\mu) = \mu - \alpha_i$.
Hence we have the next corollary.

\begin{coro}\label{invf}
For $\mu \in W\lambda$ and $F = f(\mu)$,
$$\mu = \sum_{i\in I} (\#(S_i(F)) - \#(F))\Lambda_i = \lambda - \sum_{i\in I} c_i(F)\alpha_i .$$ 
\end{coro}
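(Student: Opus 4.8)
The plan is to prove the second equality $\mu=\lambda-\sum_{i\in I}c_i(F)\alpha_i$ first, by induction on $\#F$, and then to deduce the first equality $\mu=\sum_{i\in I}(\#(S_i(F))-\#(F))\Lambda_i$ from it using Corollary \ref{action filter,orbit} and the linear independence of the simple roots. For the base case $F=\emptyset$: since $f$ is an order isomorphism, $f^{-1}(\emptyset)$ is the minimum of $(W\lambda,\le_s)$, which is $\lambda$ itself (see Remark \ref{bruhat}); as $c_i(\emptyset)=0$ for all $i\in I$, the formula holds.

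For the inductive step, assume $\#F=n\ge 1$, choose a minimal element $x$ of the subposet $F$, and put $F':=F\setminus\{x\}$. Then $F'$ is again an order filter, it is covered by $F$ in $(\mathcal F(P_\lambda),\subseteq)$, and $F\setminus F'=\{x\}$; set $i:=\kappa(x)$ and $\mu':=f^{-1}(F')$. Since $f$ is an order isomorphism and $(W\lambda,\le_s)=(W\lambda,\le_w)$ by Remark \ref{strong,weak}, $\mu'$ is covered by $\mu$ in $(W\lambda,\le_w)$. The heart of the argument is the claim that $\mu=s_i(\mu')=\mu'-\alpha_i$. Granting this, the induction closes at once: since $c_j(F')=c_j(F)$ for $j\ne i$ and $c_i(F')=c_i(F)-1$, the inductive hypothesis gives $\mu=\mu'-\alpha_i=\lambda-\sum_{j\in I}c_j(F')\alpha_j-\alpha_i=\lambda-\sum_{j\in I}c_j(F)\alpha_j$.

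To prove the claim, I would first observe that any chain $\mu'=\nu_0,\nu_1,\dots,\nu_k=\mu$ witnessing $\mu'\le_w\mu$ is \emph{strictly} increasing: each step $\nu_{t+1}=s_{j_t}(\nu_t)$ with $\nu_t(h_{j_t})>0$ gives $\nu_{t+1}=\nu_t-\nu_t(h_{j_t})\alpha_{j_t}\ne\nu_t$, hence $\nu_t<_w\nu_{t+1}$. Since $\mu'$ is covered by $\mu$, such a chain must have length $k=1$, so $\mu=s_j(\mu')$ for some $j\in I$ with $\mu'(h_j)>0$; because $\lambda$ is minuscule, $\mu'(h_j)=1$, hence $\mu=\mu'-\alpha_j$. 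Finally, $j=i$: by Corollary \ref{action filter,orbit}, $f(\mu)=f(s_j(\mu'))=S_j(f(\mu'))=S_j(F')$, so $\{x\}=F\setminus F'=S_j(F')\setminus F'$, and since $S_j(F')\setminus F'\subseteq A_j(F')\setminus F'\subseteq\kappa^{-1}(\{j\})$ by Remark \ref{remark f-invo} and Definition \ref{filter involution}, we obtain $\kappa(x)=j$, i.e.\ $j=i$.

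For the first equality, fix $i\in I$ and apply the second equality (now established) to both $F$ and $S_i(F)=f(s_i(\mu))$ (Corollary \ref{action filter,orbit}); subtracting the two expressions and using $s_i(\mu)=\mu-\mu(h_i)\alpha_i$ gives
$$-\mu(h_i)\alpha_i=\sum_{k\in I}\bigl(c_k(F)-c_k(S_i(F))\bigr)\alpha_k .$$
Comparing coefficients with respect to the basis $\{\alpha_k\mid k\in I\}$ yields $c_k(S_i(F))=c_k(F)$ for $k\ne i$ and $\mu(h_i)=c_i(S_i(F))-c_i(F)$; hence $\#(S_i(F))-\#(F)=\sum_{k\in I}\bigl(c_k(S_i(F))-c_k(F)\bigr)=c_i(S_i(F))-c_i(F)=\mu(h_i)$, and therefore $\mu=\sum_{i\in I}\mu(h_i)\Lambda_i=\sum_{i\in I}(\#(S_i(F))-\#(F))\Lambda_i$. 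The step I expect to be the main obstacle is the claim in the third paragraph — in particular, carefully passing from a cover relation in the weak order to a single simple reflection $s_i$ with $\mu'(h_i)=1$ (which relies on every $\le_w$-step being strictly increasing together with the minuscule hypothesis); the colour-matching via $A_j$ is then routine bookkeeping.
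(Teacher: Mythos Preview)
Your proof is correct and follows essentially the same route the paper sketches in the sentence preceding the corollary: iterate the observation that each cover $\mu'\to\mu$ in $W\lambda$ subtracts a single simple root $\alpha_i$ while adding a single box of colour $i$ to the filter. The only cosmetic difference is that for the claim $\mu=s_i(\mu')$ with $i=\kappa(x)$ you argue via the weak order and Corollary~\ref{action filter,orbit}, whereas the paper would simply invoke Proposition~\ref{color filter,orbit} directly (which already packages the equivalence ``$\mu'\to s_i\mu'$ is a cover $\Leftrightarrow$ $f(s_i\mu')\setminus f(\mu')$ is a singleton of colour $i$''); your derivation of the first equality by subtracting the second equality at $F$ and $S_i(F)$ is a clean alternative to the implicit case split on $\mu(h_i)\in\{-1,0,1\}$ using Lemma~\ref{property f-invo,c-d-comp}.
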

For $F \in \mathcal{F}(P_{\lambda})$, we define 
$$g(F) := \sum_{i\in I} (\#(S_i(F)) - \#(F))\Lambda_i = \lambda - \sum_{i\in I} c_i(F)\alpha_i .$$
By Corollary \ref{invf}, $g:(\mathcal{F}(P_{\lambda}),\subseteq)\overset{\sim}{\rightarrow} (W\lambda,\le_s)$ is the inverse of $f$.

We will use the following proposition later.

\begin{prop}[{\cite[Proposition 8.6]{Proc2}}]\label{property I-c-d-comp}
Keep the notation and setting in Proposition \ref{iso filter,orbit}.
Let $(P_{\lambda},\le,\kappa,I)$ be the $I$-colored d-complete poset.
If there exists  the covering relation between $x,y\in P_{\lambda}$, then the  color $\kappa(x)$ of $x$ is adjacent to the color $\kappa(y)$ of $y$ in the Dynkin diagram of $\Lieg$.
\end{prop}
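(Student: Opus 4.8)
The plan is to deduce Proposition \ref{property I-c-d-comp} from the already-established correspondence between the top tree $T_{P_\lambda}$ and the Dynkin diagram of $\Lieg$, together with the structure of $d_k$-intervals. Fix a covering relation $x \rightarrow y$ in $P_\lambda$; by Proposition \ref{property c-d-comp}(1) we have $\kappa(x) \ne \kappa(y)$, so it remains to show the two colors are \emph{adjacent} in the Dynkin diagram. I would split into two cases according to whether both $x$ and $y$ lie in the top tree $T_{P_\lambda}$.

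First, suppose $x, y \in T_{P_\lambda}$. Since $\kappa|_{T_{P_\lambda}}$ is, by construction of the $I$-colored d-complete poset, the graph isomorphism from the Hasse diagram of $T_{P_\lambda}$ (with arrows replaced by edges) onto the Dynkin diagram of $\Lieg$, the covering relation $x \rightarrow y$ becomes an edge in the Dynkin diagram between $\kappa(x)$ and $\kappa(y)$; adjacency is immediate. So the substantive case is when at least one of $x, y$ lies in $P_\lambda \setminus T_{P_\lambda}$. If $y \notin T_{P_\lambda}$, then by Proposition \ref{property d-comp}(2) there are unique $z \in P_\lambda$ and $k \ge 3$ with $[y, z]$ a $d_k$-interval; I would trace through the shape of $d_k(1)$ and the coloring rule (Proposition \ref{colored d-comp}(b): $\kappa$ is constant on the two ends of a $d_k$-interval, i.e. $\kappa(w_k) = \kappa(z_k)$), noting that inside a $d_k$-interval every covering pair has colors that are either equal to, or "one step away from" colors appearing on the spine — and ultimately reduce the color of any element of $P_\lambda \setminus T_{P_\lambda}$ to a color occurring on the top tree via the chain of $d_k$-intervals that Proctor's analysis attaches to it. The cleanest route is probably: for $w \in P_\lambda \setminus T_{P_\lambda}$, the unique $d_k$-interval $[w, z]$ it generates has $\kappa(w) = \kappa(z)$, and the element covering $w$ inside that interval has a color adjacent to $\kappa(w)$, so one can "push $w$ up" through its interval without changing the adjacency question. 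Iterating, any covering relation gets transported to one inside the top tree, where the first case applies.

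Alternatively — and this may be the shorter path — one can appeal directly to the dictionary between $P_\lambda$, its coloring, and the weight-orbit structure established in Corollaries \ref{action filter,orbit} and \ref{invf}. If $x \rightarrow y$ is a covering relation in $P_\lambda$ with $\kappa(x) = i$, $\kappa(y) = j$, consider order filters $F$ and $F' = F \cup \{x\}$ (or the appropriate pair differing by $x$ with $y$ already present), so that $f^{-1}(F) = \mu$ and $f^{-1}(F') = s_i \mu = \mu - \alpha_i$ with $\mu(h_i) = 1$. One then analyzes how $S_j$ acts near this configuration: the presence of the covering relation $x \rightarrow y$ with $\kappa(y) = j$ forces, via the definitions of $A_j, R_j$, a corresponding change in $c_j$ when one applies $s_i$, which by Corollary \ref{invf} translates into $a_{ij} = \mu(h_i) - (s_i\mu)(h_i) \ne 0$, i.e. $\langle \alpha_j^\vee, \alpha_i \rangle \ne 0$, which is exactly adjacency. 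This requires choosing the weight $\mu$ so that both $x$ and $y$ sit on the "boundary" of $f(\mu)$, which is possible because $x \rightarrow y$ is a cover.

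The main obstacle is the bookkeeping in the case $x \notin T_{P_\lambda}$ or $y \notin T_{P_\lambda}$: one must control how colors propagate through the double-tailed diamonds, using (D1)–(D3) and Proposition \ref{colored d-comp}(b) to ensure that the adjacency relation is preserved when an element is replaced by the top of its associated $d_k$-interval. I expect this to reduce, after the dust settles, to Proctor's classification (Proposition \ref{property d-comp}(3)) — checking adjacency in each of the finitely many self-dual connected d-complete poset classes (Shape, Shifted Shape, Inset, Swivel, Bat) against the corresponding Dynkin diagram in Table \ref{tab:iso filter,orbit} — but the conceptual proof above avoids that case check. In the write-up I would present the conceptual argument and relegate the diamond bookkeeping to a short lemma about colorings of $d_k$-intervals.
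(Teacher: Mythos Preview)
The paper does not supply a proof of this proposition; it is quoted verbatim from \cite[Proposition 8.6]{Proc2} and used as a black box (in the proof of Lemma \ref{property S fold}). So there is no in-paper argument to compare against, and any proof you write would be an addition rather than a reconstruction.

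On your first approach (reduce to the top tree via $d_k$-intervals): the idea is sound, but the sketch misroutes the case split and leaves the crucial step implicit. The split should be on $x$, not $y$: since $T_{P_\lambda}$ is upward-closed, $x\in T_{P_\lambda}$ already forces $y\in T_{P_\lambda}$, and the graph-isomorphism argument applies; the only remaining case is $x\notin T_{P_\lambda}$. In that case you must first prove that $y$ lies inside the unique $d_k$-interval $[x,z]$ furnished by Proposition \ref{property d-comp}(2). This is not automatic: it follows because a second cover $y'\ne y$ of $x$ outside $[x,z]$ would, via (D1), produce a $d_3$-interval based at $x$, contradicting the uniqueness of $(z,k)$. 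Once $y\in[x,z]$, the nested subintervals $[w_l,z_l]\cong d_l(1)$ give $\kappa(w_l)=\kappa(z_l)$ by Proposition \ref{colored d-comp}(b), and hence the color pair of the cover $(x,y)$ coincides with that of a cover strictly higher in $P_\lambda$ (namely $(z_{k-1},z_k)$ if $k\ge4$, or $(y,z)$ if $k=3$); induction on height then terminates in $T_{P_\lambda}$. Your sentence ``the element covering $w$ inside that interval has a color adjacent to $\kappa(w)$'' is precisely the assertion under proof, so it cannot be invoked mid-argument --- the transport mechanism just described is what replaces it.

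On your second approach (via Corollaries \ref{action filter,orbit} and \ref{invf}): be wary of circularity. Those corollaries rest on Proposition \ref{color filter,orbit}, which the paper imports from \cite[Proposition 9.1]{Proc2}, i.e.\ from a result appearing \emph{after} Proposition 8.6 in Proctor's paper and plausibly depending on it. Even setting that aside, your sketch does not specify an order filter $F$ for which both $x$ and $y$ sit on the boundary in the required sense; making such a choice and verifying that $\#S_j(F)-\#S_j(F\cup\{x\})\ne -1$ needs an argument you have not yet supplied.
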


%%%%%%%%%%%%%%%%%%%%%%%%%%%%%%%%%%%%%%%%%%%%%%%%%%%%%%%%%%%%%%%%%%%%%%%%%%%%%%%%%%%%%%%%%%%%%%%%%%%%
%%%%%%%%%%%%%%%%%%%%%%%%%%%%%%%%%%%%%%%%%%%%%%%%%%%%%%%%%%%%%%%%%%%%%%%%%%%%%%%%%%%%%%%%%%%%%%%%%%%%

\section{Folding of a Lie algebra.}\label{Sec:folding}

We review the ``folding'' of a simply-laced finite-dimensional simple Lie algebra; for the details, see \cite[Sections 7.9 and 7.10]{Kac} and \cite[Section 9.5]{Cart} in example.

%%%%%%%%%%
\begin{figure}[h]
\centering
\input{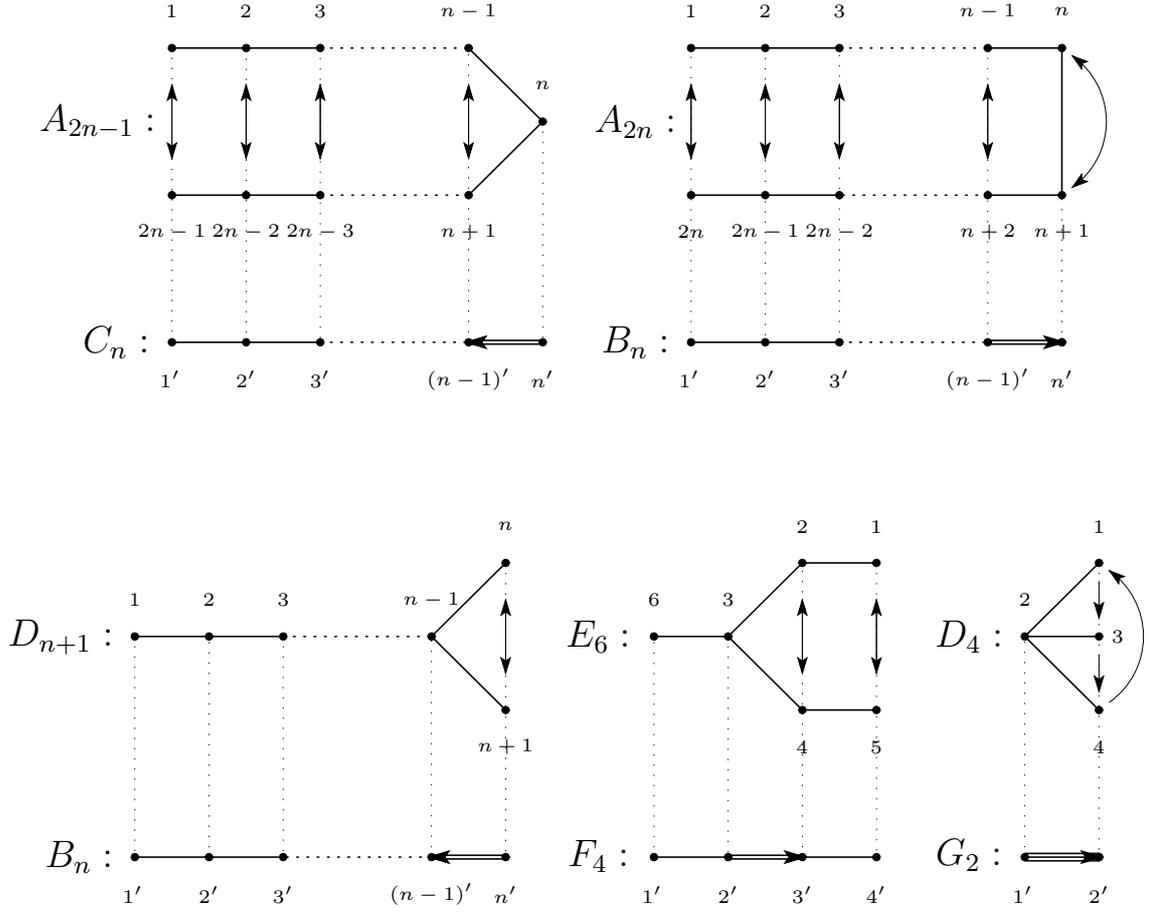}
\vspace{10pt}
\caption{The Dynkin diagram of $\Lieg$, its (non-trivial) graph automorphism $\sigma:I\rightarrow I$, and the Dynkin diagram of the fixed point subalgebra $\Lieg(0)$.}\label{fig:dynkin index2}
\end{figure}
%%%%%%%%%%

Let $\Lieg$ be the finite-dimensional  simple Lie algebra of type $A_n,D_n$ or $E_6$; we use the notation in Section \ref{Sec:Lie}.
Let $\sigma$ be a non-trivial graph automorphism of the Dynkin diagram of $\Lieg$.
Denote by $\langle\sigma\rangle$ the cyclic group generated by $\sigma$ (in the group of permutations on $I$), and $J$ the set of $\langle\sigma\rangle$-orbits on $I$.
We say that $p\in J$ satisfies the \emph{orthogonality condition} if $a_{ij}=a_{ji}=0$ for all $i,j\in p$ with $i\not=j$; notice that $p\in J$ does not satisfy the orthogonality condition if and only if $\Lieg$ is of type $A_{2n}$ and $p=\{n,n+1\}$.\label{not orth}
It is known that the graph automorphism $\sigma$ induces a (unique) Lie algebra automorphsim of $\Lieg$ such that $\sigma(e_i)=e_{\sigma(i)},\sigma(f_i)=f_{\sigma(i)},\sigma(h_i)=h_{\sigma(i)}$ for $i\in I$; we set $\Lieg(0):=\{x\in\Lieg\,|\,\sigma(x)=x\}$.
For each $p\in J$, we define $H_p,E_p,F_p\in \Lieg(0)$ as follows:

\begin{enumerate}\renewcommand{\labelenumi}{(\arabic{enumi})}
\item If $p$ satisfies the orthogonality condition, then
$$H_p:=\sum_{i\in p}h_i,\qquad E_p:=\sum_{i\in p}e_i,\qquad F_p:=\sum_{i\in p}f_i.$$
\item If $p$ does not satisfy the orthogonality condition, then
$$H_p:=2\sum_{i\in p}h_i,\qquad E_p:=\sum_{i\in p}e_i,\qquad F_p:=2\sum_{i\in p}f_i.$$
\end{enumerate}

\begin{prop}[see, e.g., {\cite[Sections 7.9 and 7.10]{Kac}}]\label{lie fold}
The fixed point subalgebra $\Lieg(0)$ is generated by $\{H_p,E_p,F_p\}_{p\in J}$, and is isomorphic to a multiply-laced finite-dimensional simple Lie algebra; see Figure \ref{fig:dynkin index2} and Table \ref{tab:lie fold}.
\end{prop}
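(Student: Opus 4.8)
The plan is to follow the classical route (see \cite[Sections 7.9, 7.10]{Kac} and \cite[Section 9.5]{Cart}): first check that $\{H_p,E_p,F_p\}_{p\in J}$ satisfies the Chevalley--Serre relations of the simple Lie algebra attached to the folded Dynkin diagram in Figure \ref{fig:dynkin index2}, then identify the subalgebra these elements generate with all of $\Lieg(0)$. Since $\sigma$ permutes the Chevalley generators indexed by $p$ and fixes the set $p$ setwise, the sums defining $E_p,F_p,H_p$ are $\sigma$-invariant, so $H_p,E_p,F_p\in\Lieg(0)$. Next I would verify that $(E_p,H_p,F_p)$ is an $\mathfrak{sl}_2$-triple. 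When $p$ satisfies the orthogonality condition this is immediate from $[e_i,f_j]=\delta_{ij}h_i$, $[h_i,e_j]=a_{ij}e_j$, $[h_i,f_j]=-a_{ij}f_j$ together with $a_{ij}=0$ for distinct $i,j\in p$, so the triple is a sum of pairwise commuting $\mathfrak{sl}_2$-triples. When $p$ does not satisfy the orthogonality condition --- the unique case $\Lieg$ of type $A_{2n}$, $p=\{n,n+1\}$, $a_{n,n+1}=a_{n+1,n}=-1$ --- the factors $2$ inserted in $H_p$ and $F_p$ are exactly what makes $[E_p,F_p]=H_p$, $[H_p,E_p]=2E_p$, $[H_p,F_p]=-2F_p$ hold; this is a short direct computation inside the $A_2$-subalgebra generated by $e_n,f_n,e_{n+1},f_{n+1}$.

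\textbf{Cartan matrix and Serre relations.} I would then define $\tilde a_{pq}\in\mathbb{Z}$ by $[H_p,E_q]=\tilde a_{pq}E_q$; this is well defined because $\sigma$-equivariance forces all $e_i$ with $i\in q$ to have the same $H_p$-weight. A finite case-by-case check over the types $A_n$, $D_n$, $E_6$ and their automorphisms $\sigma$ shows that $\tilde A=(\tilde a_{pq})_{p,q\in J}$ is the Cartan matrix of the multiply-laced type listed in Table \ref{tab:lie fold}, with the single asymmetric entry of $\tilde A$ arising precisely from the non-orthogonal orbit. Together with $[H_p,F_q]=-\tilde a_{pq}F_q$, $[E_p,F_q]=\delta_{pq}H_p$ and $[H_p,H_q]=0$, I would then check the Serre relations $(\operatorname{ad}E_p)^{1-\tilde a_{pq}}E_q=0$ and $(\operatorname{ad}F_p)^{1-\tilde a_{pq}}F_q=0$ for $p\neq q$: expanding $\operatorname{ad}\bigl(\sum_{i\in p}e_i\bigr)^{1-\tilde a_{pq}}$ and using the Serre relations of $\Lieg$, the cross terms involving two distinct $i,i'\in p$ vanish in the orthogonal case, and the non-orthogonal case is once more settled by a computation inside a copy of $A_2$.

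\textbf{Identification with $\Lieg(0)$.} By the previous step and the Serre presentation of a finite-type simple Lie algebra, the subalgebra $\Lieg'\subseteq\Lieg(0)$ generated by $\{H_p,E_p,F_p\}_{p\in J}$ is a nonzero homomorphic image of the simple Lie algebra $\Lieg(\tilde A)$; since $\Lieg(\tilde A)$ is simple, $\Lieg'\cong\Lieg(\tilde A)$. To conclude $\Lieg'=\Lieg(0)$, I would use that $\Lieg(0)$, being the fixed-point subalgebra of a finite-order automorphism of a simple Lie algebra, is reductive with $\Lieh(0)$ as a Cartan subalgebra, and that $\{H_p\}_{p\in J}$ spans $\Lieh(0)$; hence $\Lieg'$ contains a Cartan subalgebra of $\Lieg(0)$ together with root vectors for a full system of simple roots of $\Lieg(0)$, which forces $\Lieg'=\Lieg(0)$. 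Equivalently, one may simply compare $\dim\Lieg'=\dim\Lieg(\tilde A)$ with $\dim\Lieg(0)$, computing the latter from the eigenspace decomposition $\Lieg=\bigoplus_{\zeta^m=1}\Lieg_\zeta$ under $\sigma$ (where $m=\operatorname{ord}\sigma\in\{2,3\}$ and $\Lieg(0)=\Lieg_1$).

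\textbf{Main obstacle.} The routine part is the orthogonal case, where every computation reduces to pairwise commuting copies of $\mathfrak{sl}_2$ and is mechanical. The genuinely delicate point is the non-orthogonal orbit $p=\{n,n+1\}$ in type $A_{2n}$: there $e_n$ and $e_{n+1}$ do not commute, the naive sums fail to give an $\mathfrak{sl}_2$-triple, and one must justify the inserted factors of $2$ in $H_p$ and $F_p$ and confirm that they produce the asymmetric (multiply-laced) Cartan entry $\tilde a_{pq}\tilde a_{qp}=2$ rather than a symmetric one. I expect the bulk of the care to go into this case, both in Step 1 and in the Serre-relation verification of Step 2.
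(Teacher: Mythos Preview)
The paper does not actually prove this proposition: it is stated with a citation to \cite[Sections 7.9 and 7.10]{Kac} (and implicitly to \cite[Section 9.5]{Cart}) and no proof is given in the text. Your sketch is precisely the standard argument found in those references, so there is nothing to compare against and your outline is consistent with what the paper relies on.

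Two small remarks on your write-up. First, in the Serre-relation step for the orthogonal case, the reason the cross terms are harmless is not merely that $[e_i,e_{i'}]=0$ for $i,i'\in p$, but also that $a_{ij}=a_{\sigma(i)\,\sigma(j)}$ forces each $e_j$ with $j\in q$ to be annihilated by the same power of each $\operatorname{ad}e_i$; you might want to make that $\sigma$-equivariance explicit when you expand $(\sum_{i\in p}\operatorname{ad}e_i)^{1-\tilde a_{pq}}$ by the multinomial theorem. Second, in the non-orthogonal $A_{2n}$ case your computation of the $\mathfrak{sl}_2$-triple is correct, but when you verify the Serre relation between $p=\{n,n+1\}$ and the adjacent orbit $q=\{n-1,n+2\}$ you should keep track of the fact that $[e_n,e_{n+1}]\neq 0$, so the expansion of $(\operatorname{ad}E_p)^{1-\tilde a_{pq}}E_q$ genuinely requires working inside the $A_3$-subalgebra on nodes $n-1,n,n+1,n+2$, not just an $A_2$; the computation still goes through, but it is slightly larger than your outline suggests.
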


%%%%%%%%%%
\begin{table}[h]
\begin{center}
\begin{tabular}{|c||c|c|c|c|c|}
\hline
type of $\Lieg$&$A_{2n}$&$A_{2n-1}$&$D_{n+1}$&$E_6$&$D_4$\\
\hline
order of $\sigma$&2&2&2&2&3\\
\hline
type of $\Lieg(0)$&$B_{n}$&$C_{n}$&$B_{n}$&$F_4$&$G_2$\\
\hline
\end{tabular}
\caption{$\Lieg$, $\sigma$, and $\Lieg(0)$. The vertices of the Dynkin diagram of $\Lieg(0)$ are ``numbered'' as Figure \ref{fig:dynkin index2}.}\label{tab:lie fold}
\end{center}
\end{table}
%%%%%%%%%%

Let $\Lieh(0)$ be the subspace of $\Lieh$ spanned by $\{H_p\}_{p\in J}$, which is a Cartan subalgebra of $\Lieg(0)$.
Denote by $\rm res:\Lieh^*\rightarrow \Lieh(0)^*,\mu\mapsto \mu|_{\Lieh(0)}$, the restriction map, and set $\beta_p:=\text{res}( \alpha_i)\in \Lieh(0)^*$ for $p\in J$, where $i$ is an arbitrary element in the $\langle\sigma\rangle$-orbit $p$; note that $\beta_p$ is independent of the choice of $i\in p$.
The set of simple coroots and the set of simple roots of $\Lieg(0)$ are given by $\{H_p\}_{p\in J}$ and $\{\beta_p\}_{p\in J}$, respectively.
Denote by $\tilde{\Delta}_+\subset \Lieh(0)^*$ the set of positive roots of $\Lieg(0)$, and $\tilde{\Delta}_-\subset \Lieh(0)^*$ the set of negative roots of $\Lieg(0)$.
For $p\in J$, we define $\tilde{s}_p(\nu):=\nu-\nu(H_p) \beta_p$ for $\nu\in \Lieh(0)^*$.
Then, $\tilde{W}:=\langle \tilde{s}_p\mid p\in J\rangle$ is the Weyl group of $\Lieg(0)$.

For each $p\in J$, we define $\hat{s}_p\in W$ as follows:

\begin{enumerate}\renewcommand{\labelenumi}{(\arabic{enumi})}
\item  If $p$ satisfies the orthogonality condition, then
$$\hat{s}_p:=\prod_{k\in p}s_k.$$
\item If $p$ does not satisfy the orthogonality condition, that is, if $\Lieg$ is of type $A_{2n}$ and $p=\{n,n+1\}$ (see also page \pageref{not orth}), then
$$\hat{s}_p:= s_n s_{n+1} s_n = s_{n+1} s_n s_{n+1} .$$
\end{enumerate} 

\begin{lemm}\label{property weyl fold}
For $p\in J$, $\tilde{s}_p(\text{res}(\mu))=\text{res}(\hat{s}_p (\mu))$ for all $\mu\in\Lieh^*$.
\end{lemm}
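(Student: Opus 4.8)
The plan is to verify the identity $\tilde{s}_p(\mathrm{res}(\mu)) = \mathrm{res}(\hat{s}_p(\mu))$ by a direct computation, splitting into the two cases that govern the definitions of $\hat{s}_p$ and of $H_p, E_p, F_p$: the case in which $p$ satisfies the orthogonality condition, and the exceptional case in which $\Lieg$ is of type $A_{2n}$ and $p = \{n, n+1\}$. In both cases the strategy is the same: expand the right-hand side $\mathrm{res}(\hat{s}_p(\mu))$ using the formula $s_k(\mu) = \mu - \mu(h_k)\alpha_k$ for each simple reflection appearing in $\hat{s}_p$, apply $\mathrm{res}$, and then recognize the result as $\mathrm{res}(\mu) - \mathrm{res}(\mu)(H_p)\,\beta_p$, which is exactly $\tilde{s}_p(\mathrm{res}(\mu))$ by the definition of $\tilde{s}_p$. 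The two facts I will lean on throughout are that $\mathrm{res}(\alpha_i) = \beta_p$ for every $i \in p$, and that $\mu(h_i) = \mathrm{res}(\mu)(h_i)$ whenever $h_i \in \Lieh(0)$, more usefully $\sum_{i\in p}\mu(h_i) = \mu\bigl(\sum_{i\in p} h_i\bigr)$, relating the coefficients that appear to $\mu(H_p)$ and hence to $\mathrm{res}(\mu)(H_p)$.

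In the orthogonality case, $\hat{s}_p = \prod_{k\in p} s_k$ with the $s_k$ ($k\in p$) pairwise commuting (since $a_{kl} = 0$ for distinct $k,l\in p$, so $s_k$ fixes $\alpha_l$). Hence $\hat{s}_p(\mu) = \mu - \sum_{k\in p}\mu(h_k)\alpha_k$: applying the reflections one at a time, each $s_k$ subtracts $\mu(h_k)\alpha_k$ and the earlier subtracted terms $\mu(h_l)\alpha_l$ are fixed by $s_k$ because $\alpha_l(h_k) = a_{kl} = 0$. Applying $\mathrm{res}$ and using $\mathrm{res}(\alpha_k) = \beta_p$ for all $k\in p$ gives $\mathrm{res}(\hat{s}_p(\mu)) = \mathrm{res}(\mu) - \bigl(\sum_{k\in p}\mu(h_k)\bigr)\beta_p = \mathrm{res}(\mu) - \mu(H_p)\,\beta_p = \mathrm{res}(\mu) - \mathrm{res}(\mu)(H_p)\,\beta_p$, where the last equality holds because $H_p \in \Lieh(0)$. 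This is $\tilde{s}_p(\mathrm{res}(\mu))$, as required.

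In the exceptional case $p = \{n, n+1\}$ of type $A_{2n}$, I compute $\hat{s}_p(\mu) = s_n s_{n+1} s_n(\mu)$ directly. Writing $a := \mu(h_n)$, $b := \mu(h_{n+1})$ and using $a_{n,n+1} = a_{n+1,n} = -1$, a short expansion gives $s_n s_{n+1} s_n(\mu) = \mu - (a+b)\alpha_n - (a+b)\alpha_{n+1}$ — one checks that the coefficient of each of $\alpha_n, \alpha_{n+1}$ comes out to $-(a+b)$. Now $H_p = 2(h_n + h_{n+1})$ in this case, so $\mu(H_p) = 2(a+b)$, while $\mathrm{res}(\alpha_n) = \mathrm{res}(\alpha_{n+1}) = \beta_p$, hence $\mathrm{res}(\hat{s}_p(\mu)) = \mathrm{res}(\mu) - 2(a+b)\beta_p = \mathrm{res}(\mu) - \mu(H_p)\beta_p = \tilde{s}_p(\mathrm{res}(\mu))$. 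The only real bookkeeping hazard is getting the coefficients in the threefold product $s_n s_{n+1} s_n$ right — this is the step most prone to a sign or arithmetic slip — but it is a finite computation in the rank-two sublattice spanned by $\alpha_n, \alpha_{n+1}$ and presents no conceptual difficulty. Once both cases are checked, the lemma follows since $\mu \in \Lieh^*$ was arbitrary.
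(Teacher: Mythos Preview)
Your proposal is correct and follows essentially the same approach as the paper: a direct case split into the orthogonal and non-orthogonal cases, expanding $\hat{s}_p(\mu)$ explicitly and then applying $\mathrm{res}$ to recognize $\tilde{s}_p(\mathrm{res}(\mu))$. The paper's computation is terser (it writes $\hat{s}_p(\mu) = \mu - \mu(h_n+h_{n+1})(\alpha_n+\alpha_{n+1})$ in the non-orthogonal case without the intermediate steps), but your added detail on why the reflections commute and how the coefficients in $s_n s_{n+1} s_n(\mu)$ work out is accurate and matches the paper's formula.
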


\begin{proof}
If $p$ satisfies the orthogonality condition, then we compute
\begin{equation*}
\text{res}(\hat{s}_p (\mu))=\text{res}\left
(\mu-\sum_{i\in p}\mu(h_i)\alpha_i\right)=\text{res}(\mu)-\text{res}(\mu)(H_p)\beta_p=\tilde{s}_p(\text{res}(\mu)).
\end{equation*}
If $p$ does not satisfy the orthogonality condition, then we compute
\begin{equation*}\begin{split}
\text{res}(\hat{s}_p (\mu))&=\text{res}(\mu-\mu(h_n+h_{n+1})(\alpha_n+\alpha_{n+1}))\\
&=\text{res}(\mu)-\text{res}(\mu)(H_p)\beta_p=\tilde{s}_p(\text{res}(\mu)).
\end{split}\end{equation*}
\end{proof}

Since $\sigma$ acts on $\Lieh=\bigoplus_{i\in I}\mathbb{C}h_i$, $\sigma$ naturally acts also on $\Lieh^*$ by $(\sigma(\mu))(h)=\mu(\sigma^{-1}(h))$ for $\mu\in\Lieh^*$ and $h\in\Lieh$; we see that $\sigma(\Lambda_i)=\Lambda_{\sigma(i)},\sigma(\alpha_i)=\alpha_{\sigma(i)}$ for $i\in I$.
Notice that $\sigma s_i \sigma^{-1}=s_{\sigma(i)}$ for $i\in I$ in ${\it GL}(\Lieh^*)$.
Hence, $\sigma W\sigma^{-1}\subseteq W$.

\begin{prop}[{\cite[Proposition 9.17]{Cart}}]\label{hat W and tilde W}
Set $\hat{W}:=\{w\in W\mid \sigma w \sigma^{-1}=w\}$.
There is a group isomorphism from $\hat{W}$ onto $\tilde{W}$ such that $\hat{s}_p \mapsto \tilde{s}_p$.
Therefore $\hat{W}$ is the subgroup of $W$ generated by $\{\hat{s}_p\}_{p\in J}$.
\end{prop}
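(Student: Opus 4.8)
The plan is to establish the Proposition in two stages: first, that the assignment $\hat s_p\mapsto\tilde s_p$ extends to a group isomorphism from $\langle\hat s_p\mid p\in J\rangle$ onto $\tilde W$; second, that $\hat W=\langle\hat s_p\mid p\in J\rangle$. Combining the two, the first isomorphism becomes the desired one $\hat W\xrightarrow{\sim}\tilde W$.

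For the first stage I would begin by checking that $\hat s_p\in\hat W$ for each $p\in J$: since $\sigma s_i\sigma^{-1}=s_{\sigma(i)}$, this follows from the commutativity of the $s_k$ $(k\in p)$ when $p$ satisfies the orthogonality condition, and from the braid relation $s_ns_{n+1}s_n=s_{n+1}s_ns_{n+1}$ in the remaining case $\Lieg=A_{2n}$, $p=\{n,n+1\}$. Next, write $\Lieh^*=(\Lieh^*)^\sigma\oplus V$ with $V$ a $\sigma$-stable complement; then $\text{res}$ restricts to a linear isomorphism $(\Lieh^*)^\sigma\xrightarrow{\sim}\Lieh(0)^*$, and $\hat W$ stabilizes $(\Lieh^*)^\sigma$ because $\sigma w=w\sigma$ and $\sigma\mu=\mu$ give $\sigma(w\mu)=w\mu$. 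Transporting the $\hat W$-action on $(\Lieh^*)^\sigma$ through this isomorphism yields a group homomorphism $\Psi\colon\hat W\to{\it GL}(\Lieh(0)^*)$, and Lemma \ref{property weyl fold} shows $\Psi(\hat s_p)=\tilde s_p$; hence $\Psi$ restricts to a surjection $\langle\hat s_p\mid p\in J\rangle\twoheadrightarrow\tilde W$. Finally, $\Psi$ is injective: the weight $\rho:=\sum_{i\in I}\Lambda_i$ lies in $(\Lieh^*)^\sigma$ since $\sigma$ permutes the $\Lambda_i$, and it is regular because $\rho(h_i)=1$ for every $i\in I$, so its stabilizer in $W$ is trivial; if $\Psi(w)=\mathrm{id}$ then $w$ fixes $(\Lieh^*)^\sigma$ pointwise, in particular $w\rho=\rho$, forcing $w=1$. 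Thus $\Psi$ gives an isomorphism $\langle\hat s_p\mid p\in J\rangle\xrightarrow{\sim}\tilde W$ sending $\hat s_p$ to $\tilde s_p$.

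The second, and main, stage is the inclusion $\hat W\subseteq\langle\hat s_p\mid p\in J\rangle$, which I would prove by induction on the length $\ell(w)$ of $w\in\hat W$. If $w\neq1$, choose $i\in I$ with $\ell(s_iw)<\ell(w)$, i.e.\ $w^{-1}(\alpha_i)\in\Delta_-$; since $\sigma w^{-1}\sigma^{-1}=w^{-1}$ and $\sigma$ permutes the simple roots while preserving $\Delta_-$, we get $w^{-1}(\alpha_j)\in\Delta_-$ for every $j$ in the $\langle\sigma\rangle$-orbit $p$ of $i$. I then claim $\ell(\hat s_pw)=\ell(w)-\ell(\hat s_p)<\ell(w)$. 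When $p$ satisfies the orthogonality condition, one left-multiplies $w$ by the pairwise-commuting reflections $s_k$ $(k\in p)$ one at a time, using $s_k(\alpha_{k'})=\alpha_{k'}$ for distinct $k,k'\in p$ to see that each step lowers the length by one; when $\Lieg=A_{2n}$ and $\hat s_p=s_ns_{n+1}s_n$, a short computation shows that $w^{-1}$ sends each of $\alpha_n$, $\alpha_n+\alpha_{n+1}$, $\alpha_{n+1}$ into $\Delta_-$, whence the successive left multiplications by $s_n$, $s_{n+1}$, $s_n$ each lower the length. In either case $\hat s_pw\in\hat W$ (as $\hat s_p\in\hat W$) and $\ell(\hat s_pw)<\ell(w)$, so $\hat s_pw\in\langle\hat s_q\mid q\in J\rangle$ by the inductive hypothesis, and therefore $w\in\langle\hat s_q\mid q\in J\rangle$ as well. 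Together with the first stage this proves $\hat W=\langle\hat s_p\mid p\in J\rangle$ and that $\Psi$ is a group isomorphism $\hat W\xrightarrow{\sim}\tilde W$ with $\hat s_p\mapsto\tilde s_p$.

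I expect the length computation in the second stage to be the main obstacle, the uniform handling of orthogonal and non-orthogonal orbits being its delicate point. An alternative is to invoke the general fact (essentially due to Steinberg) that the fixed subgroup of a finite Weyl group under a diagram automorphism is again a reflection group generated by the orbit elements; but the direct induction above keeps the argument self-contained and in line with the conventions of Section \ref{Sec:folding}.
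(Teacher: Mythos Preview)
Your argument is correct. Both stages go through as written: the injectivity of $\Psi$ via the regular $\sigma$-fixed weight $\rho=\sum_{i\in I}\Lambda_i$ is clean, and the length computation in the inductive step is sound in both the orthogonal case (using $s_k(\alpha_{k'})=\alpha_{k'}$ for distinct $k,k'\in p$) and the $A_{2n}$ case (where $w^{-1}(\alpha_n+\alpha_{n+1})$ is a root and a sum of two negative roots, hence negative).

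Note, however, that the paper does not give its own proof of this proposition: it is simply quoted from \cite[Proposition~9.17]{Cart} and used as a black box. So there is no ``paper's proof'' to compare against. What you have written is a self-contained argument tailored to the conventions of Section~\ref{Sec:folding}; it essentially reproduces the standard Steinberg-type proof (fixed points of a diagram automorphism on a Weyl group form the Weyl group of the folded root system) in explicit form. The only remark worth adding is that your final paragraph already identifies this connection; if you wanted to shorten the write-up you could simply cite Carter, as the paper does, but your direct induction has the advantage of making the length identity $\ell(\hat s_pw)=\ell(w)-\ell(\hat s_p)$ explicit, which is in the spirit of how $\hat s_p$ is used elsewhere in the paper (e.g.\ Lemma~\ref{sign coincide}).
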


\begin{rema}\label{hat orbit}
Because $\tilde{W}$ and $\hat{W}$ are generated by $\{\tilde{s}_p\}_{p\in J}$ and $\{\hat{s}_p\}_{p\in J}$, we see by Lemma \ref{property weyl fold} that $\text{res}(\hat{W}\lambda)=\tilde{W}\text{res}(\lambda)$ for every (dominant) integral weight $\lambda$.
\end{rema}

Let $\tilde\Lambda_p\in\Lieh(0)^*(p\in J)$ be the fundamental weights of $\Lieg(0)$.
We can easily show the following lemma.

\begin{lemm}\label{weight fold}
Let $p\in J$, and $i\in p$.
\begin{enumerate}\renewcommand{\labelenumi}{(\arabic{enumi})}
\item If $p$ satisfies the orthogonality condition, then $\text{res}(\Lambda_i)=\tilde\Lambda_p$.
\item If $p$ does not satisfy the orthogonality condition, then $\text{res}(\Lambda_i)=2\tilde\Lambda_p$.
\end{enumerate}
\end{lemm}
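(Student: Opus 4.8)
The plan is to identify $\text{res}(\Lambda_i)$ by pairing it against the simple coroots of $\Lieg(0)$. Recall from Section~\ref{Sec:folding} that $\{H_q\}_{q\in J}$ is the set of simple coroots of $\Lieg(0)$, so the fundamental weight $\tilde\Lambda_p$ is the unique element of $\Lieh(0)^*$ with $\tilde\Lambda_p(H_q)=\delta_{pq}$ for all $q\in J$; moreover $\Lieh(0)$ is spanned by $\{H_q\}_{q\in J}$, so any element of $\Lieh(0)^*$ is determined by its values on the $H_q$. Since $H_q\in\Lieh(0)$, we have $\text{res}(\Lambda_i)(H_q)=\Lambda_i(H_q)$, and thus the whole lemma reduces to computing the scalars $\Lambda_i(H_q)$ for $q\in J$, using only $\Lambda_i(h_j)=\delta_{ij}$ for $i,j\in I$ together with the explicit formulas for $H_q$.

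Next I would split into cases according to whether $q$ satisfies the orthogonality condition. If it does, then $H_q=\sum_{j\in q}h_j$, so $\Lambda_i(H_q)=\sum_{j\in q}\delta_{ij}$; since the $\langle\sigma\rangle$-orbits partition $I$ and $i\in p$, this equals $1$ if $q=p$ and $0$ otherwise, i.e. $\Lambda_i(H_q)=\delta_{pq}$. If $q$ does not satisfy the orthogonality condition --- so $\Lieg$ is of type $A_{2n}$ and $q=\{n,n+1\}$ --- then $H_q=2(h_n+h_{n+1})$, hence $\Lambda_i(H_q)=2(\delta_{in}+\delta_{i,n+1})$, which equals $2$ if $i\in q$ and $0$ otherwise, i.e. $\Lambda_i(H_q)=2\delta_{pq}$ (again using $i\in p$).

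Finally I would assemble the two parts. The key bookkeeping point is that there is at most one orbit in $J$ failing the orthogonality condition, namely $\{n,n+1\}$ in type $A_{2n}$; in particular an orbit failing it is automatically distinct from one satisfying it. Hence, if $p$ satisfies the orthogonality condition, then for every $q\in J$ we get $\text{res}(\Lambda_i)(H_q)=\delta_{pq}$ (a non-orthogonal $q$, if any, has $q\neq p$, so it contributes $2\delta_{pq}=0=\delta_{pq}$), whence $\text{res}(\Lambda_i)=\tilde\Lambda_p$. If $p$ does not satisfy it, then $p$ is the unique such orbit, so $\text{res}(\Lambda_i)(H_p)=2=2\delta_{pp}$ while $\text{res}(\Lambda_i)(H_q)=0=2\delta_{pq}$ for $q\neq p$, whence $\text{res}(\Lambda_i)=2\tilde\Lambda_p$. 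I do not expect any genuine obstacle here; the statement is essentially a direct computation, and the only thing to watch is keeping the orthogonal and non-orthogonal orbit cases consistent with one another.
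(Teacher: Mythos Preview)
Your proof is correct and is precisely the direct computation the paper has in mind; the paper itself omits the proof, stating only that the lemma is easily shown, and your argument via evaluating $\text{res}(\Lambda_i)$ on each $H_q$ is the natural way to carry this out.
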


\begin{lemm}\label{res injection}
Let $\lambda$ be a dominant integral weight of $\Lieg$, and let $\mu_1,\mu_2\in \hat{W}\lambda$.
If $\text{res}(\mu_1)=\text{res}(\mu_2)$, then $\mu_1=\mu_2$.
Therefore the map $\text{res}|_{\hat{W}\lambda}: \hat{W}\lambda\rightarrow\tilde{W}\text{res}(\lambda)$ is bijective (see Remark \ref{hat orbit}).
\end{lemm}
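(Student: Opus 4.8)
The plan is to reduce the statement to the $\sigma$-fixed subspace $(\Lieh^*)^\sigma:=\{\mu\in\Lieh^*\mid\sigma(\mu)=\mu\}$, on which $\text{res}$ is injective, by means of the averaging operator $\pi:=\frac{1}{m}\sum_{k=0}^{m-1}\sigma^k$ on $\Lieh^*$, where $m$ denotes the order of $\sigma$. Note that $\pi(\mu)\in(\Lieh^*)^\sigma$ for every $\mu\in\Lieh^*$ (since $\sigma\circ\pi=\pi$).

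First I would establish three elementary facts. (i) The restriction of $\text{res}$ to $(\Lieh^*)^\sigma$ is injective: if $\mu\in(\Lieh^*)^\sigma$ satisfies $\text{res}(\mu)=0$, then $\mu(H_p)=0$ for all $p\in J$; since $\mu$ is $\sigma$-fixed, $\mu(h_i)$ is constant on each $\langle\sigma\rangle$-orbit $p$, and $\mu(H_p)$ is a positive multiple of that common value, so $\mu(h_i)=0$ for all $i\in I$, i.e.\ $\mu=0$. (ii) $\text{res}\circ\sigma^k=\text{res}$ for all $k$: each $H_p$ lies in $\Lieh(0)$ and is therefore $\sigma$-fixed, so $(\sigma^k(\mu))(H_p)=\mu(\sigma^{-k}(H_p))=\mu(H_p)$; averaging over $k$ gives $\text{res}\circ\pi=\text{res}$. (iii) For $w\in\hat W$ and $\nu\in\Lieh^*$ one has $\pi(w\nu)=w\,\pi(\nu)$: since $\sigma^k w\sigma^{-k}=w$, we get $\sigma^k(w\nu)=(\sigma^k w\sigma^{-k})(\sigma^k\nu)=w(\sigma^k\nu)$, and averaging over $k$ yields the claim.

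Next I would determine the $W$-stabilizer of $\pi(\lambda)$. Since $\sigma(h_i)=h_{\sigma(i)}$, each $\sigma^k\lambda$ is dominant, so $\pi(\lambda)$ lies in the closure of the dominant chamber; hence its $W$-stabilizer is generated by the $s_i$ with $\pi(\lambda)(h_i)=0$ (this is the standard description recalled, in the integral case, in Remark \ref{bruhat}, and the argument applies verbatim here). But $\pi(\lambda)(h_i)=\frac{1}{m}\sum_k\lambda(h_{\sigma^{-k}(i)})$ is a sum of non-negative numbers, so it vanishes only when the $k=0$ term $\lambda(h_i)$ vanishes, i.e.\ only when $s_i\in W_\lambda$. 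Therefore $\{w\in W\mid w\,\pi(\lambda)=\pi(\lambda)\}\subseteq W_\lambda$.

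With these in hand the proof is short. Write $\mu_1=w_1\lambda$ and $\mu_2=w_2\lambda$ with $w_1,w_2\in\hat W$. From $\text{res}(\mu_1)=\text{res}(\mu_2)$ and (ii) we get $\text{res}(\pi(\mu_1))=\text{res}(\pi(\mu_2))$; since $\pi(\mu_1),\pi(\mu_2)\in(\Lieh^*)^\sigma$, (i) gives $\pi(\mu_1)=\pi(\mu_2)$, and then (iii) gives $w_1\pi(\lambda)=w_2\pi(\lambda)$, so $w_2^{-1}w_1\in\{w\in W\mid w\,\pi(\lambda)=\pi(\lambda)\}\subseteq W_\lambda$; hence $\mu_1=w_1\lambda=w_2\lambda=\mu_2$. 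Combining this injectivity with the equality $\text{res}(\hat W\lambda)=\tilde W\,\text{res}(\lambda)$ from Remark \ref{hat orbit} yields the asserted bijectivity of $\text{res}|_{\hat W\lambda}$. The one point that needs care is that a general element of $\hat W\lambda$ is \emph{not} itself $\sigma$-fixed when $\lambda$ is not (already visible for $\Lieg$ of type $A_3$ and $\lambda=\Lambda_1$), so one really must route the argument through $\pi$ rather than argue directly on $(\Lieh^*)^\sigma$; this is the only genuine obstacle, and once it is identified every step above is routine.
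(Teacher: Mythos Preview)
Your proof is correct. The three facts (i)--(iii) are verified without difficulty, $\pi(\lambda)$ lies in the closed dominant chamber, and the standard fact that the $W$-stabilizer of any point there is generated by the simple reflections fixing it gives $W_{\pi(\lambda)}\subseteq W_\lambda$; the conclusion follows. Your closing remark about the necessity of $\pi$ is apt.

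The paper's argument takes a different route: instead of averaging over $\langle\sigma\rangle$ and staying inside $\Lieh^*$, it passes through the isomorphism $\hat W\cong\tilde W$ (Proposition~\ref{hat W and tilde W}) and the intertwining relation $\text{res}\circ\hat w=\tilde w\circ\text{res}$ (Lemma~\ref{property weyl fold}) to transport the question to $\tilde W\,\text{res}(\lambda)\subset\Lieh(0)^*$. There $\text{res}(\lambda)$ is dominant integral for $\Lieg(0)$ (Lemma~\ref{weight fold}), so its $\tilde W$-stabilizer is the parabolic $\langle\tilde s_p\mid\text{res}(\lambda)(H_p)=0\rangle$; pulling back to $\hat W$ and noting that $\text{res}(\lambda)(H_p)=0$ forces $\lambda(h_i)=0$ for all $i\in p$ shows the relevant element fixes $\lambda$. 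The trade-off: the paper's proof leans on the already-established structure (Lemmas~\ref{property weyl fold},~\ref{weight fold} and Proposition~\ref{hat W and tilde W}) and works entirely with integral weights, while yours is more self-contained---it never invokes $\tilde W$ or the group isomorphism, only the definition of $\hat W$ as the $\sigma$-centralizer---at the cost of appealing to the stabilizer description for a point of $\bar C$ that need not be integral.
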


\begin{proof}
For each $i=1,2$, let $\hat{w_i}\in\hat{W}$ be such that $\mu_i=\hat{w_i}\lambda$, and let $\tilde{w_i}\in\tilde{W}$ be such that $\text{res}\circ\hat{w_i}=\tilde{w_i}\circ\text{res}$ (see Lemma \ref{property weyl fold}).
We have $\tilde{w_1}\text{res}(\lambda)=\text{res}(\hat{w_1}\lambda)=\text{res}(\mu_1)=\text{res}(\mu_2)=\text{res}(\hat{w_2}\lambda)=\tilde{w_2}\text{res}(\lambda)$.
Since $\text{res}(\lambda)$ is a dominant integral weight for $\Lieg(0)$ by Lemma \ref{weight fold}, it follows that $\tilde{w_1}^{-1}\tilde{w_2}\in\langle\tilde{s}_p \mid (\text{res}(\lambda))(H_p)=0\rangle$, and hence $\hat{w_1}^{-1}\hat{w_2}\in\langle\hat{s}_p \mid (\text{res}(\lambda))(H_p)=0\rangle$.
Observe that $(\text{res}(\lambda))(H_p)=0$ if and only if $\lambda(h_i)=0$ for all $i\in p$.
Thus we obtain $\hat{w_1}^{-1}\hat{w_2}(\lambda)=\lambda$, and hence $\mu_1=\hat{w_1}\lambda=\hat{w_2}\lambda=\mu_2$, as desired.
\end{proof}

Notice that $\sigma$ preserves $\Delta$ and $\Delta_+, \Delta_-$.

\begin{lemm}\label{sign coincide}
Let $\lambda$ be a dominant integral weight of $\Lieg$.
\begin{enumerate}\renewcommand{\labelenumi}{(\arabic{enumi})}
\item[(1)] For each $\mu\in \hat{W}\lambda$ and $p\in J$, either $\mu(h_{i})\ge0$ for all $i\in p$ or $\mu(h_{i})\le0$ for all $i\in p$.
\item[(2)] For each $\mu\in \hat{W}\lambda$ and $p\in J$, if $\mu(h_{i})>0$ (resp., $\mu(h_{i})<0$) for some $i\in p$, then $\mu <_w \hat{s}_p(\mu)$ (resp., $\mu >_w \hat{s}_p(\mu)$).
\end{enumerate}
\end{lemm}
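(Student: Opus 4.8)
The plan is to reduce both parts to the dominance of $\lambda$ together with the $\sigma$-equivariance of the $W$-action on coroots. Write $\mu=w\lambda$ with $w\in\hat W$, so that $\mu(h_i)=\lambda(w^{-1}(h_i))$ for each $i\in I$. Since $w^{-1}(h_i)$ is a coroot of $\Lieg$, it is either positive or negative, and as $\lambda$ is dominant we get $\mu(h_i)\ge 0$ in the first case and $\mu(h_i)\le 0$ in the second. For part~(1) the key observation is that $w$ commutes with $\sigma$ (the defining property of $\hat W$) and that $\sigma(h_i)=h_{\sigma(i)}$, so for $i\in p$ and $j=\sigma^{t}(i)\in p$ one has $w^{-1}(h_j)=\sigma^{t}(w^{-1}(h_i))$; since $\sigma$ permutes the simple coroots, it preserves the positive coroots, hence $w^{-1}(h_i)$ and $w^{-1}(h_j)$ have the same sign. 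Therefore either $w^{-1}(h_i)$ is a positive coroot for every $i\in p$, whence $\mu(h_i)\ge 0$ for all $i\in p$, or it is a negative coroot for every $i\in p$, whence $\mu(h_i)\le 0$ for all $i\in p$.

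For part~(2), suppose $\mu(h_i)>0$ for some $i\in p$; by part~(1) we have $\mu(h_j)\ge 0$ for all $j\in p$. We produce a chain in $(W\lambda,\le_w)$ from $\mu$ up to $\hat s_p(\mu)$ by applying simple reflections one at a time. If $p$ satisfies the orthogonality condition, then $\hat s_p=\prod_{k\in p}s_k$ is a product of pairwise commuting involutions and $s_k(h_{k'})=h_{k'}$ for distinct $k,k'\in p$; hence, applying the reflections $s_k$ ($k\in p$) successively in any chosen order, the weight to which $s_k$ is applied still has value $\mu(h_k)$ at $h_k$, so each $s_k$ with $\mu(h_k)>0$ is a genuine ascent in $\le_w$ and those with $\mu(h_k)=0$ fix the weight. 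Since at least one ascent occurs (for $k=i$) and $\hat s_p(\mu)=\mu-\sum_{k\in p}\mu(h_k)\alpha_k\ne\mu$, we obtain $\mu<_w\hat s_p(\mu)$. If $p$ does not satisfy the orthogonality condition, then $\Lieg$ is of type $A_{2n}$ and $p=\{n,n+1\}$ with $a_{n,n+1}=a_{n+1,n}=-1$; taking the reduced word $\hat s_p=s_ns_{n+1}s_n$ when $\mu(h_n)>0$ and $\hat s_p=s_{n+1}s_ns_{n+1}$ when $\mu(h_{n+1})>0$, a short direct computation of how these reflections move the values at $h_n$ and $h_{n+1}$ shows that each of the three simple reflections (applied from the right) hits a weight with strictly positive value at the relevant coroot, except possibly the last one, which then fixes the weight; this gives the required chain of ascents, and $\hat s_p(\mu)=\mu-(\mu(h_n)+\mu(h_{n+1}))(\alpha_n+\alpha_{n+1})\ne\mu$.

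The ``resp.'' statement follows by applying the case just treated to $\nu:=\hat s_p(\mu)$, which again lies in $\hat W\lambda$ since $\hat s_p\in\hat W$ (Proposition \ref{hat W and tilde W}). Indeed $\hat s_p$ is an involution with $\hat s_p(h_i)=-h_{\tau(i)}$ for an involution $\tau$ of the orbit $p$ ($\tau=\mathrm{id}$ in the orthogonal case, $\tau$ the transposition in the non-orthogonal case), so if $\mu(h_i)<0$ for some $i\in p$ then $\nu(h_{\tau(i)})=-\mu(h_i)>0$; hence by part~(2) we get $\nu<_w\hat s_p(\nu)=\mu$, i.e.\ $\mu>_w\hat s_p(\mu)$.

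I expect the only real obstacle to be bookkeeping: since $\lambda$ need not be $\sigma$-fixed, the values $\mu(h_i)$ for $i$ ranging over one orbit need not be equal---only of a common sign---and in part~(2) the non-orthogonal $A_{2n}$ case genuinely requires tracking how $s_n,s_{n+1},s_n$ move these values (one can have, for instance, $\mu(h_n)>0$ while $\mu(h_{n+1})=0$). Once part~(1) is in place, everything else is routine.
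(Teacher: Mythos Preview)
Your proof is correct and follows essentially the same approach as the paper: part~(1) reduces to the $\sigma$-invariance of $w\in\hat W$ and the fact that $\sigma$ preserves positive (co)roots, and part~(2) builds a $\le_w$-chain from $\mu$ to $\hat s_p(\mu)$ one simple reflection at a time, using part~(1) to control the signs and a direct computation in the non-orthogonal $A_{2n}$ case. Your treatment of the ``resp.'' case via $\hat s_p(h_i)=-h_{\tau(i)}$ is a clean way to avoid repeating the argument; the paper simply says the other cases are similar.
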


\begin{proof}
(1) Let $w\in\hat{W}$ be such that $\mu=w\lambda$.
Because $\mu(h_i)=(w\lambda)(h_i)=\lambda(w^{-1}h_i)=\lambda((w^{-1}\alpha_i)^\vee)$, and because $\lambda$ is a dominant integral weight, it suffices to show that either $w^{-1}\alpha_i\in\Delta_+$ for all $i\in p$ or  $w^{-1}\alpha_i\in\Delta_-$ for all $i\in p$.
If $w^{-1}\alpha_i\in\Delta_+$ (resp., $w^{-1}\alpha_i\in\Delta_-$) for some $i\in p$, then $w^{-1}\alpha_{\sigma(i)}= w^{-1}\sigma\alpha_{i}=\sigma w^{-1}\alpha_{i}\in \Delta_+$ (resp., $\in w^{-1}\alpha_i\in\Delta_-$).
Since $p$ is a $\langle\sigma\rangle$-orbit, the assertion above follows.

(2) We give a proof only for the case that $\mu(h_{i})>0$ for some $i\in p$, and $\#p = 2$; the proofs for the other cases are similar.
Since $\mu(h_{i})>0$, it follows that $\mu <_w s_i(\mu)$.
If $p=\{i,j\}$, then we see by part (1) that $\mu(h_{j})\ge0$.
Assume that $p$ satisfies the orthogonality condition.
Then,
\begin{eqnarray*}
s_js_i(\mu) &=& s_j(\mu-\mu(h_i)\alpha_i) = s_j(\mu)-\mu(h_i)s_j(\alpha_i)\\
&=& \mu - \mu(h_j)\alpha_j-\mu(h_i)\alpha_i = s_i(\mu) - \mu(h_j)\alpha_j \ge_w s_i(\mu).
\end{eqnarray*}
Thus we obtain $\mu <_w s_i(\mu) \le_w s_js_i(\mu) = \hat{s}_p(\mu)$, as desired.
Assume that $p$ does not satisfy the orthogonality condition.
Then,
\begin{eqnarray*}
s_js_i(\mu) &=& s_j(\mu-\mu(h_i)\alpha_i) = s_j(\mu)-\mu(h_i)s_j(\alpha_i)\\
&=& \mu - \mu(h_j)\alpha_j-\mu(h_i)(\alpha_i+\alpha_j) = \mu - \mu(h_i)\alpha_i - \mu(h_i)\alpha_j - \mu(h_j)\alpha_j\\
&=& s_i(\mu) - \mu(h_i)\alpha_j - \mu(h_j)\alpha_j >_w s_i(\mu),
\end{eqnarray*}
\begin{eqnarray*}
s_is_js_i(\mu) &=& s_i(s_i(\mu) - \mu(h_i+h_j)\alpha_j) = \mu - s_i(\mu(h_i+h_j)\alpha_j)\\
&=& \mu - \mu(h_i+h_j)(\alpha_i+\alpha_j) = \mu - \mu(h_i)\alpha_i - \mu(h_j)\alpha_i - \mu(h_i+h_j)\alpha_j\\
&\ge_w& \mu - \mu(h_i)\alpha_i - \mu(h_i+h_j)\alpha_j = s_js_i(\mu).
\end{eqnarray*}
Thus we obtain $\mu <_w s_i(\mu) <_w s_js_i(\mu) \le_w s_is_js_i(\mu) = \hat{s}_p(\mu)$, as desired.
\end{proof}

\begin{defi}\label{ht}
We set $Q_+:= \sum_{i\in I}\mathbb{Z}_{\ge0}\alpha_i$.
For $\nu = \sum_{i\in I}m_i\alpha_i \in Q_+$, we define the height $\text{ht}(\nu)$ of $\nu$ by $\text{ht}(\nu):= \sum_{i\in I}m_i$.
Similarly, we set $\tilde{Q}_+:= \sum_{p\in J}\mathbb{Z}_{\ge0}\beta_p$.
For $\xi = \sum_{p\in J}n_p\beta_p \in \tilde{Q}_+$, we define the height $\text{ht}(\xi)$ of $\xi$ by $\text{ht}(\xi):= \sum_{p\in J}n_p$.
\end{defi}

\begin{lemm}\label{bruhat fold}
Let $\lambda$ be a dominant integral weight, and $\mu_1,\mu_2\in \hat{W}\lambda$.
Then, $\mu_1 \le_s \mu_2$ if and only if $\text{res}(\mu_1) \le_s \text{res}(\mu_2)$.
\end{lemm}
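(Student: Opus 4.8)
The plan is to prove the equivalence by induction on $M:=\text{ht}(\lambda-\mu_2)$, which is a non-negative integer since $\mu_2\in W\lambda$ and $\lambda$ is dominant; note that $\text{ht}(\lambda-\mu_2)=\text{ht}(\text{res}(\lambda)-\text{res}(\mu_2))$, because $\text{res}$ sends $\alpha_i$ to $\beta_p$ (with $p\ni i$) and heights on $Q_+$ and $\tilde Q_+$ are counted with multiplicity, so the statement and the induction parameter are symmetric in $\Lieg$ and $\Lieg(0)$. When $M=0$ we have $\mu_2=\lambda$, hence $\text{res}(\mu_2)=\text{res}(\lambda)$, and both $\mu_1\le_s\mu_2$ and $\text{res}(\mu_1)\le_s\text{res}(\mu_2)$ hold if and only if $\mu_1=\lambda$ (using Lemma \ref{res injection}), so there is nothing to prove.

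For the inductive step assume $M\ge 1$. If $\mu_1=\mu_2$ (equivalently $\text{res}(\mu_1)=\text{res}(\mu_2)$, by Lemma \ref{res injection}) both sides of the desired equivalence are trivially true, so assume $\mu_1\ne\mu_2$. Since $\mu_2\ne\lambda$, it is not dominant, so $\mu_2(h_i)<0$ for some $i\in I$; fix $p\in J$ with $i\in p$, so that $\mu_2(h_j)\le 0$ for every $j\in p$, with $<0$ for at least one, by Lemma \ref{sign coincide}(1). Put $\mu_2':=\hat{s}_p(\mu_2)\in\hat{W}\lambda$. Then $\mu_2'<_s\mu_2$ by Lemma \ref{sign coincide}(2); moreover $\text{res}(\mu_2')=\tilde{s}_p(\text{res}(\mu_2))$ by Lemma \ref{property weyl fold} and $\text{res}(\mu_2')(H_p)=-\text{res}(\mu_2)(H_p)>0$, whence also $\text{res}(\mu_2')<_s\text{res}(\mu_2)$; finally $\text{ht}(\lambda-\mu_2')<M$, so the inductive hypothesis applies to any pair whose second entry is $\mu_2'$.

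Now I distinguish the two cases supplied by Lemma \ref{sign coincide}(1): either $\mu_1(h_j)\ge 0$ for all $j\in p$, or $\mu_1(h_j)\le 0$ for all $j\in p$. In the first case $\text{res}(\mu_1)(H_p)\ge 0$, and iterating Proposition \ref{strong-slide}(1) over the reflections $s_j$, $j\in p$ (which commute and, when $p$ satisfies the orthogonality condition, fix the value at $h_{j'}$ for $j'\in p\setminus\{j\}$) yields $\mu_1\le_s\mu_2\iff\mu_1\le_s\mu_2'$; the same argument for $\Lieg(0)$ with the single reflection $\tilde{s}_p$ yields $\text{res}(\mu_1)\le_s\text{res}(\mu_2)\iff\text{res}(\mu_1)\le_s\text{res}(\mu_2')$, the ``$\Leftarrow$'' implications in both being mere transitivity via $\mu_2'<_s\mu_2$, resp.\ $\text{res}(\mu_2')<_s\text{res}(\mu_2)$. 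Combining these with the inductive hypothesis applied to $(\mu_1,\mu_2')$ gives the claim. In the second case set $\mu_1':=\hat{s}_p(\mu_1)$; Lemma \ref{sign coincide} forces $\mu_1'(h_j)\ge 0$ and $\mu_2'(h_j)\ge 0$ for $j\in p$, and iterating Proposition \ref{strong-slide}(3) (forward) and (4) (backward) yields $\mu_1\le_s\mu_2\iff\mu_1'\le_s\mu_2'$, with the analogous equivalence for the restrictions obtained from Proposition \ref{strong-slide}(3),(4) for $\Lieg(0)$; the inductive hypothesis applied to $(\mu_1',\mu_2')$ then finishes the proof. (If $\mu_1(h_j)=0$ for all $j\in p$ the two cases coincide.)

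The conceptual input is entirely Lemmas \ref{property weyl fold}, \ref{sign coincide} and Proposition \ref{strong-slide}. The point I expect to require the most care is the non-orthogonal orbit, namely $\Lieg$ of type $A_{2n}$ with $p=\{n,n+1\}$ and $\hat{s}_p=s_ns_{n+1}s_n$: there ``iterating Proposition \ref{strong-slide}'' is not literally an iteration over commuting reflections, so one has to push $\mu_1\le_s\mu_2$ through the three reflections $s_n,s_{n+1},s_n$ by hand, tracking the values at $h_n$ and $h_{n+1}$ at each stage exactly as in the proof of Lemma \ref{sign coincide}(2), and one has to verify directly from Lemma \ref{sign coincide} the auxiliary sign-flip fact that $\mu(h_j)\le 0$ for all $j\in p$ implies $(\hat{s}_p\mu)(h_j)\ge 0$ for all $j\in p$. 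This is bookkeeping rather than a genuine obstacle, but it is where the real work lies.
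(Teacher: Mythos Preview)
Your proposal is correct and follows essentially the same route as the paper's proof: induction on the height of $\lambda-\mu_2$, with the inductive step driven by choosing $p\in J$ with $\mu_2(h_i)<0$ for some $i\in p$, and then reducing via Proposition~\ref{strong-slide} (parts (1),(3),(4)) together with Lemma~\ref{sign coincide}. The paper separates the two implications into two independent inductions (on $\text{ht}(\text{res}(\lambda)-\text{res}(\mu_2))$ for ``if'' and on $\text{ht}(\lambda-\mu_2)$ for ``only if''), whereas you observe that these two heights coincide and run a single induction proving the equivalence at once; this is a mild streamlining but not a different argument. Your identification of the non-orthogonal orbit $p=\{n,n+1\}$ in type $A_{2n}$ as the place requiring explicit bookkeeping is exactly right, and the paper likewise handles only the orthogonal two-element case in detail, declaring the remaining cases ``similar''.
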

\begin{proof}
First, we show the ``if'' part.
We see that $\text{res}(\lambda) - \text{res}(\mu_2) \in \tilde{Q}_+$ since $\text{res}(\lambda)$ is dominant and $\text{res}(\mu_2) \in \tilde{W}\text{res}(\lambda)$.
We show the assertion by induction on $\tilde{h}:=\text{ht}(\text{res}(\lambda) - \text{res}(\mu_2))$.
If $\tilde{h} = 0$, then $\text{res}(\mu_2) = \text{res}(\lambda)$.
Because $\text{res}(\lambda) - \text{res}(\mu_1) \in \tilde{Q}_+$, and because $\text{res}(\mu_1) - \text{res}(\lambda) = \text{res}(\mu_1) - \text{res}(\mu_2) \in \tilde{Q}_+$ by the definition of $\le_s$ on $\tilde{W}\text{res}(\lambda)$, we get $\text{res}(\mu_1) = \text{res}(\lambda)$.
Now, for $i = 1,2$, we see that $\lambda - \mu_i \in Q_+$.
Since $\text{res}(\lambda - \mu_i) = \text{res}(\lambda) - \text{res}(\mu_i) = \text{res}(\lambda) - \text{res}(\lambda) = 0$, we deduce that $\lambda = \mu_i$.
Thus, we obtain $\mu_1 = \lambda \le_s \lambda = \mu_2$.

Assume that $\tilde{h} > 0$.
In this case, there exists $p \in J$ such that $\text{res}(\mu_2)(H_p) < 0$, because $\text{res}(\lambda)$ is a unique dominant integral weight in $\tilde{W}\text{res}(\lambda)$; note that $\text{ht}(\text{res}(\lambda) - \tilde{s}_p\text{res}(\mu_2)) < \tilde{h}$.
Here, we give a proof only for the case that $p=\{i,j\}$ with $i\not=j$, and $p$ satisfies the orthogonality condition; the proofs for the other cases are similar.
If $\text{res}(\mu_1)(H_p) \ge 0$, then we get $\text{res}(\mu_1) \le_s  \tilde{s}_p\text{res}(\mu_2)$ by Proposition \ref{strong-slide} (1).
By the induction hypothesis, it follows that $\mu_1 \le_s \hat{s}_p(\mu_2)$.
Because $\mu_2(h_i+h_j) = \text{res}(\mu_2)(H_p) < 0$, we see by Lemma \ref{sign coincide} that $\hat{s}_p(\mu_2) = s_j s_i(\mu_2) \le_s s_i (\mu_2) \le_s \mu_2$.
Thus we obtain $\mu_1 \le_s \mu_2$.
If $\text{res}(\mu_1)(H_p) \le 0$, then we get $\tilde{s}_p\text{res}(\mu_1) \le_s  \tilde{s}_p\text{res}(\mu_2)$ by Proposition \ref{strong-slide} (3).
By the induction hypothesis, it follows that $\hat{s}_p(\mu_1) \le_s \hat{s}_p(\mu_2)$.
Similarly to the case above, we deduce that $\hat{s}_p(\mu_k)(h_i) \le 0$ and $s_i\hat{s}_p (\mu_k)(h_j) \le 0$ for $k = 1,2$.
By Proposition \ref{strong-slide} (4), we obtain $s_i\hat{s}_p (\mu_1) \le_s s_i\hat{s}_p (\mu_2)$, and then $\mu_1= s_js_i\hat{s}_p (\mu_1) \le_s s_js_i\hat{s}_p (\mu_2) = \mu_2$, as desired.

Next, we show the ``only if'' part by the induction on $h := \text{ht}(\lambda - \mu_2)$.
If $h = 0$, then we see by the same argument as above that $\mu_1 = \mu_2 = \lambda$.
Hence, $\text{res}(\mu_1) \le_s \text{res}(\mu_2)$.
Assume that $h > 0$.
Then there exists $i \in I$ such that $\mu_2(h_i) < 0$.
Let $p \in J$ be such that $i \in p$.
Here, we give a proof only for the case that $p=\{i,j\}$ with $i\not=j$, and $p$ satisfies the orthogonality condition; the proofs for the other cases are similar.
By Lemma \ref{sign coincide}, $\mu_2(h_j) \le 0$ and $\hat{s}_p(\mu_2) = s_js_i(\mu_2)\le_s s_i(\mu_2) <_s \mu_2$; note that $\text{ht}(\lambda - \hat{s}_p(\mu_2)) < h$.
Assume that $\mu_1(h_i)\ge 0$.
It follows from Proposition \ref{strong-slide} (1) that $\mu_1 \le_s  s_i(\mu_2)$.
Also, we see by Lemma \ref{sign coincide} (1) that $\mu_1(h_j)\ge 0$.
By Proposition \ref{strong-slide} (1), we get $\mu_1 \le_s s_js_i(\mu_2) = \hat{s}_p(\mu_2)$.
By the induction hypothesis, it follows that $\text{res}(\mu_1) \le_s \tilde{s}_p\text{res}(\mu_2)$.
Because $\text{res}(\mu_2)(H_p) = \mu_2(h_i+h_j) < 0$, we have $\tilde{s}_p\text{res}(\mu_2) \le_s \text{res}(\mu_2)$, and hence $\text{res}(\mu_1) \le_s \text{res}(\mu_2)$.
Assume that $\mu_1(h_i)\le 0$.
It follows from Proposition \ref{strong-slide} (3) that $s_i(\mu_1) \le_s  s_i(\mu_2)$.
Also, we see by Lemma \ref{sign coincide} (2) that $(s_i(\mu_1))(h_j)\le 0$.
By Proposition \ref{strong-slide} (3), we get $\hat{s}_p(\mu_1) = s_js_i(\mu_1) \le_s s_js_i(\mu_2) = \hat{s}_p(\mu_2)$.
By the induction hypothesis, it follows that $\tilde{s}_p\text{res}(\mu_1) \le_s \tilde{s}_p\text{res}(\mu_2)$.
Because $\text{res}(\mu_1)(H_p) \le 0$ and $\text{res}(\mu_2)(H_p) \le 0$, we obtain $\text{res}(\mu_1) \le_s \text{res}(\mu_2)$ by Proposition \ref{strong-slide} (4), as desired.
\end{proof}

%%%%%%%%%%%%%%%%%%%%%%%%%%%%%%%%%%%%%%%%%%%%%%%%%%%%%%%%%%%%%%%%%%%%%%%%%%%%%%%%%%%%%%%%%%%%%%%%%%%%
%%%%%%%%%%%%%%%%%%%%%%%%%%%%%%%%%%%%%%%%%%%%%%%%%%%%%%%%%%%%%%%%%%%%%%%%%%%%%%%%%%%%%%%%%%%%%%%%%%%%

\section{$J$-colored d-complete poset.}\label{Sec:J-color}

%%%%%%%%%%%%%%%%%%%%%%%%%%%%%%%%%%%%%%%%%%%%%%%%%%%%%%%%%%%%%%%%%%%%%%%%%%%%%%%%

Let $\Lieg$ be a simply-laced finite-dimensional Lie algebra, and let $\sigma$ be a non-trivial graph automorphsim of the Dynkin diagram of $\Lieg$ (see Figure \ref{fig:dynkin index2}).
Let $\lambda$ be a minuscule weight of $\Lieg$.
Recall from Proposition \ref{iso filter,orbit} that there exists a connected self-dual d-complete poset $(P_{\lambda},\le)$ such that $(W\lambda,\le_s)$ and $(\mathcal{F}(P_{\lambda}),\subseteq)$ are isomorphic.
Let $(P_{\lambda},\le,\kappa,I)$ be the $I$-colored d-complete poset (see the comment after Proposition \ref{iso filter,orbit}).
By Proposition \ref{color filter,orbit} and Corollary \ref{action filter,orbit}, there exists a unique order isomorphism $f:(W\lambda,\le_s)\overset{\sim}{\rightarrow} (\mathcal{F}(P_{\lambda}),\subseteq)$ such that $f(s_i(\mu))=S_i(f(\mu))$ for all $\mu\in W\lambda$ and $i\in I $.
Because the map $\text{res}|_{\hat{W}\lambda}:\hat{W}\lambda\rightarrow\tilde{W}\text{res}(\lambda)$ is bijective (see Lemma \ref{res injection}), we can define a map $\tilde{f}:\tilde{W}\text{res}(\lambda)\rightarrow \mathcal{F}(P_{\lambda})$ by the following commutative diagram (\ref{commutative f-tilde}):

\begin{eqnarray}\label{commutative f-tilde}
\begin{split}
%WinTpicVersion4.32a
{\unitlength 0.1in%
\begin{picture}(15.8500,12.0000)(-3.8500,-15.3500)%
% STR 2 0 3 0 Black White  
% 4 1400 900 1400 1000 5 0 0 0
% $(\mathcal{F}(P_\lambda),\subseteq)$
\put(14.0000,-10.0000){\makebox(0,0){$(\mathcal{F}(P_\lambda),\subseteq)$}}%
% STR 2 0 3 0 Black White  
% 4 400 900 400 1000 5 0 0 0
% $\hat{W}\lambda$
\put(4.0000,-10.0000){\makebox(0,0){$\hat{W}\lambda$}}%
% STR 2 0 3 0 Black White  
% 4 400 300 400 400 5 0 0 0
% $W\lambda$
\put(4.0000,-4.0000){\makebox(0,0){$W\lambda$}}%
% STR 2 0 3 0 Black White  
% 4 400 1500 400 1600 5 0 0 0
% $\tilde{W}\text{res}(\lambda)$
\put(4.0000,-16.0000){\makebox(0,0){$\tilde{W}\text{res}(\lambda)$}}%
% STR 2 0 3 0 Black White  
% 4 300 700 400 700 5 0 0 0
% $\subseteq$
\put(4.0000,-7.0000){\rotatebox{90.0000}{\makebox(0,0){$\subseteq$}}}%
% VECTOR 3 0 3 0 Black White  
% 2 400 1200 400 1400
% 
\special{pn 4}%
\special{pa 400 1200}%
\special{pa 400 1400}%
\special{fp}%
\special{sh 1}%
\special{pa 400 1400}%
\special{pa 420 1333}%
\special{pa 400 1347}%
\special{pa 380 1333}%
\special{pa 400 1400}%
\special{fp}%
% VECTOR 3 0 3 0 Black White  
% 2 720 1520 1200 1200
% 
\special{pn 4}%
\special{pa 720 1520}%
\special{pa 1200 1200}%
\special{fp}%
\special{sh 1}%
\special{pa 1200 1200}%
\special{pa 1133 1220}%
\special{pa 1156 1230}%
\special{pa 1156 1254}%
\special{pa 1200 1200}%
\special{fp}%
% VECTOR 3 0 3 0 Black White  
% 2 600 400 1200 800
% 
\special{pn 4}%
\special{pa 600 400}%
\special{pa 1200 800}%
\special{fp}%
\special{sh 1}%
\special{pa 1200 800}%
\special{pa 1156 746}%
\special{pa 1156 770}%
\special{pa 1133 780}%
\special{pa 1200 800}%
\special{fp}%
% VECTOR 3 0 3 0 Black White  
% 2 600 1000 1000 1000
% 
\special{pn 4}%
\special{pa 600 1000}%
\special{pa 1000 1000}%
\special{fp}%
\special{sh 1}%
\special{pa 1000 1000}%
\special{pa 933 980}%
\special{pa 947 1000}%
\special{pa 933 1020}%
\special{pa 1000 1000}%
\special{fp}%
% STR 2 0 3 0 Black White  
% 4 900 400 900 500 5 0 0 0
% $f$
\put(9.0000,-5.0000){\makebox(0,0){$f$}}%
% STR 2 0 3 0 Black White  
% 4 900 1400 900 1500 5 0 0 0
% $\tilde{f}$
\put(9.0000,-15.0000){\makebox(0,0){$\tilde{f}$}}%
% STR 2 0 3 0 Black White  
% 4 200 1200 200 1300 5 0 0 0
% res
\put(2.0000,-13.0000){\makebox(0,0){res}}%
% STR 2 0 3 0 Black White  
% 4 800 750 800 850 5 0 0 0
% $f|_{\hat{W}\lambda}$
\put(8.0000,-8.5000){\makebox(0,0){$f|_{\hat{W}\lambda}$}}%
% STR 2 0 3 0 Black White  
% 4 750 1150 750 1250 5 0 0 0
% $\circlearrowleft$
\put(7.5000,-12.5000){\makebox(0,0){$\circlearrowleft$}}%
\end{picture}}%
\end{split}
\end{eqnarray}
\vspace{5pt}

\noindent We define $\tilde{\mathcal{F}}(P_{\lambda}):=\text{Im}(\tilde{f})\subseteq \mathcal{F}(P_{\lambda})$ (see also (\ref{tilde-F}) below).

\begin{defi}\label{j-c-d-comp}
Keep the setting above.
We define a map $\tilde{\kappa}:P_{\lambda}\rightarrow J$ to be the composition of $\kappa:P_\lambda\rightarrow I$ and the canonical projection $I\twoheadrightarrow J$.
We call the colored poset $(P_{\lambda},\le,\tilde{\kappa},J)$ the \emph{$J$-colored d-complete poset} corresponding to $\Lieg(0)$ and $\text{res}(\lambda)$.
\end{defi}

For $F \in \mathcal{F}(P_{\lambda})$ and $p \in J$, we define $\tilde{c}_p(F) := \#\{x\in F \mid \tilde{\kappa}(x) = p\}$.
By Collorary \ref{invf}, it follows that for $\mu \in \hat{W}\lambda$ and $F = f(\mu)$,
$$\text{res}(\mu) = \text{res}(\lambda) - \sum_{p\in J}\Biggl(\sum_{i\in p} c_i(F)\Biggr)\beta_p = \text{res}(\lambda) - \sum_{p\in J} \tilde{c}_p(F)\beta_p.$$
We define $\tilde{g}: \tilde{\mathcal{F}}(P_{\lambda}) \to \tilde{W}\text{res}(\lambda)$ by 
$$\tilde{g}(F) := \text{res}(\lambda) - \sum_{p\in J} \tilde{c}_p(F)\beta_p$$
for $F \in \tilde{\mathcal{F}}(P_{\lambda})$.
It can be easily checked that $\tilde{g}$ is the inverse of $\tilde{f}$.

Denote by $\tilde{A}_p,\tilde{R}_p,\tilde{S}_p:\mathcal{F}(P_\lambda)\rightarrow \mathcal{F}(P_\lambda)\ (p\in J)$ the maps in Definition \ref{filter involution} for the $J$-colored d-complete poset $(P_{\lambda},\le,\tilde{\kappa},J)$.
Also, we define the order $\tilde{\unlhd}$ on $\mathcal{F}(P_\lambda)$ in exactly the same way as Definition \ref{S-order}.
Namely, for $F,F'\in \mathcal{F}(P_\lambda)$, $F\ \tilde{\unlhd}\ F'$ if there exists a sequence of order filters $F=F_0,F_1,\ldots,F_{n-1},F_n=F'$ in $\mathcal{F}(P_\lambda)$ such that for each $i\in \{0,1,\ldots ,n-1\}$, there exists $p_i\in J$ such that $\tilde{S}_{p_i}(F_i)=F_{i+1}\supset F_i$.

\begin{theo}[main result]\label{iso fold filter,orbit}
Keep the notation and setting above.
\begin{enumerate}\renewcommand{\labelenumi}{(\arabic{enumi})}
\item[(1)]\ The poset $(\tilde{W}\text{res}(\lambda),\le_w)$ is isomorphic to the poset $(\tilde{\mathcal{F}}(P_{\lambda}),\tilde{\unlhd})$ under the map $\tilde{f}: \tilde{W}\text{res}(\lambda) \rightarrow \tilde{\mathcal{F}}(P_{\lambda})$.
\item[(2)]\ The poset $(\tilde{W}\text{res}(\lambda),\le_s)$ is isomorphic to the poset $(\tilde{\mathcal{F}}(P_{\lambda}),\subseteq)$ under the map $\tilde{f}: \tilde{W}\text{res}(\lambda) \rightarrow \tilde{\mathcal{F}}(P_{\lambda})$.
\end{enumerate}
\end{theo}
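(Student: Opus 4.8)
The plan is to isolate, as the main technical ingredient, the \emph{intertwining formula}
\[
\tilde{S}_p(\tilde{f}(\nu)) = \tilde{f}(\tilde{s}_p(\nu)) \qquad (\nu\in\tilde{W}\text{res}(\lambda),\ p\in J),
\]
together with the refinement that $\tilde{f}(\nu)\subsetneq\tilde{S}_p(\tilde{f}(\nu))$ if and only if $\nu(H_p)>0$; both parts of the theorem then drop out. Recall that $\tilde{f}=f\circ(\text{res}|_{\hat{W}\lambda})^{-1}$ is a bijection of $\tilde{W}\text{res}(\lambda)$ onto $\tilde{\mathcal{F}}(P_\lambda)$ with inverse $\tilde{g}$ (Lemma \ref{res injection}), so in each part it only remains to check that $\tilde{f}$ preserves and reflects the relevant order.

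To prove the intertwining formula, fix $\mu\in\hat{W}\lambda$, put $F:=\tilde{f}(\text{res}(\mu))=f(\mu)$ and $\mu':=\hat{s}_p(\mu)$; by Lemma \ref{property weyl fold}, $f(\mu')=\tilde{f}(\tilde{s}_p(\text{res}(\mu)))$, so the claim is $\tilde{S}_p(F)=f(\mu')$. Using the inverse map $g(E)=\lambda-\sum_{i\in I}c_i(E)\alpha_i$ of $f$ (Corollary \ref{invf}) and the explicit form of $\hat{s}_p$ (Section \ref{Sec:folding}), one computes $g(f(\mu'))-g(F)=\mu'-\mu=-\sum_{i\in p}m_i\alpha_i$, where $m_i=\mu(h_i)$ $(i\in p)$ in the orthogonal case and $m_n=m_{n+1}=\mu(h_n)+\mu(h_{n+1})$ in the non-orthogonal case; hence $c_i(f(\mu'))=c_i(F)$ for $i\notin p$ and $c_i(f(\mu'))-c_i(F)=m_i$ for $i\in p$. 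By Lemma \ref{sign coincide}(1) all the $m_i$ $(i\in p)$ have one and the same sign. Since each colour class $\kappa^{-1}(i)$ is a chain (Proposition \ref{property c-d-comp}(3)), the trace of an order filter on it consists of its top $c_i(\cdot)$ elements, and therefore $f(\mu')$ and $F$ are comparable: $f(\mu')\supseteq F$ when the $m_i$ are $\ge 0$, and $f(\mu')\subseteq F$ otherwise.

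It remains, in the case $f(\mu')\supseteq F$ (the case $f(\mu')\subseteq F$ follows by applying this to $\mu'$ in place of $\mu$, using $\hat{s}_p^{2}=\mathrm{id}$ and that $\tilde{S}_p$ is an involution by Lemma \ref{property f-invo}(3)), to prove $\tilde{A}_p(F)=f(\mu')$ and $\tilde{R}_p(F)=F$; then $(\tilde{A}_p(F)\setminus F)\cup\tilde{R}_p(F)=f(\mu')$ is an order filter, so Definition \ref{filter involution} gives $\tilde{S}_p(F)=f(\mu')$. Since $f(\mu')\supseteq F$ differs from $F$ only in colours belonging to $p$, we have $f(\mu')\subseteq\tilde{A}_p(F)$; if the inclusion were strict, choose $x$ maximal in $\tilde{A}_p(F)\setminus f(\mu')$, so that $f(\mu')\cup\{x\}$ is an order filter and $g(f(\mu')\cup\{x\})=\mu'-\alpha_{\kappa(x)}\in W\lambda$, forcing $\mu'(h_{\kappa(x)})-2\in\{-1,0,1\}$ because $\lambda$ is minuscule, hence $\mu'(h_{\kappa(x)})>0$; but from the explicit form of $\hat{s}_p$ one gets $\mu'(h_i)=-\mu(h_i)\le 0$ for $i\in p$ in the orthogonal case, and $\mu'(h_n)=-\mu(h_{n+1})\le 0$, $\mu'(h_{n+1})=-\mu(h_n)\le 0$ in the non-orthogonal case, a contradiction. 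Symmetrically, if an order filter $E\subsetneq F$ with $F\setminus E\subseteq\tilde{\kappa}^{-1}(p)$ existed, one picks $x$ minimal in $F\setminus E$, checks $x$ is then minimal in $F$ so $F\setminus\{x\}$ is an order filter, and derives $\mu(h_{\kappa(x)})+2\in\{-1,0,1\}$, contradicting $\mu(h_{\kappa(x)})\ge 0$; hence $\tilde{R}_p(F)=F$.

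Finally, the theorem follows. For (2): by Lemma \ref{bruhat fold} and the fact that $f$ is an order isomorphism (Proposition \ref{color filter,orbit}), $\text{res}(\mu_1)\le_s\text{res}(\mu_2)\iff\mu_1\le_s\mu_2\iff f(\mu_1)\subseteq f(\mu_2)$ for $\mu_1,\mu_2\in\hat{W}\lambda$, which is exactly the assertion that $\tilde{f}$ is an order isomorphism onto $(\tilde{\mathcal{F}}(P_\lambda),\subseteq)$. For (1): the intertwining formula shows $\tilde{\mathcal{F}}(P_\lambda)$ is stable under each $\tilde{S}_p$, and the refinement shows that a generating step $\nu\to\tilde{s}_p(\nu)$ of $\le_w$ (i.e.\ $\nu(H_p)>0$) corresponds under $\tilde{f}$ to a generating step $\tilde{f}(\nu)\subsetneq\tilde{S}_p(\tilde{f}(\nu))$ of $\tilde{\unlhd}$, and conversely, reading $\nu(H_p)>0$ off the change of $\tilde{c}_p$ via $\tilde{g}$; hence $\tilde{f}$ is an order isomorphism onto $(\tilde{\mathcal{F}}(P_\lambda),\tilde{\unlhd})$. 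I expect the main obstacle to be the pair of rigidity statements $\tilde{A}_p(F)=f(\mu')$ and $\tilde{R}_p(F)=F$ in the third paragraph — in particular making the maximal/minimal-element arguments and the root-theoretic contradictions go through uniformly, the non-orthogonal type-$A_{2n}$ orbit $p=\{n,n+1\}$ being the delicate case.
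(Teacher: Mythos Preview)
Your argument is correct, and the overall architecture matches the paper's: both routes hinge on the intertwining relation $\tilde{f}\circ\tilde{s}_p=\tilde{S}_p\circ\tilde{f}$ (the content of Lemma~\ref{action fold filter,orbit}) together with Lemma~\ref{bruhat fold} for part~(2). Where you genuinely diverge is in how you obtain the intertwining. The paper introduces the auxiliary operator $\hat{S}_p$ (a product of the $S_i$, $i\in p$), pulls the identity $\tilde{f}(\tilde{s}_p(\text{res}(\mu)))=\hat{S}_p(\tilde{f}(\text{res}(\mu)))$ out of Corollary~\ref{action filter,orbit} and Lemma~\ref{property weyl fold}, and then proves $\hat{S}_p(F)=\tilde{S}_p(F)$ (Lemma~\ref{property S fold}) by a case analysis on which of the three alternatives (add a box / remove a box / fix) each $S_i$, $i\in p$, realises; in the non-orthogonal type~$A_{2n}$ case it resorts to an explicit block-by-block Young diagram calculation. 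You bypass $\hat{S}_p$ entirely: you compare $F=f(\mu)$ and $f(\hat{s}_p\mu)$ via the colour counts $c_i$ (using that each $\kappa^{-1}(i)$ is a chain) to get comparability, and then pin down $\tilde{A}_p(F)$ and $\tilde{R}_p(F)$ by a maximal/minimal-element argument combined with the minuscule bound $|\mu(h_i)|\le 1$. This is more uniform --- the non-orthogonal orbit needs only the computation of $\mu'(h_n)=-\mu(h_{n+1})$, $\mu'(h_{n+1})=-\mu(h_n)$ --- and it never unpacks the combinatorics of $Y_{m,n-m+1}$. The price is that your argument, as written, applies only to $F=f(\mu)$ with $\mu\in\hat{W}\lambda$ (since you invoke Lemma~\ref{sign coincide}), whereas the paper's Lemma~\ref{property S fold} is stated for all $\mu\in W\lambda$; for the theorem itself this is harmless.
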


\renewcommand{\arraystretch}{1.2}
%%%%%%%%%%
\begin{table}[h]
\begin{center}
\begin{tabular}{|c|c|c|c|c|}
\hline
$\Lieg$&$\lambda$&$\Lieg(0)$&$\text{res}(\lambda)$&$P_{\lambda}$\\
\hline
$A_{2n-1}$&$\Lambda_1,\ldots,\Lambda_n$&$C_n$&$\tilde\Lambda_{1'},\ldots,\tilde\Lambda_{n'}$&Shape\\
\hline
$A_{2n}$&$\Lambda_1,\ldots,\Lambda_{n-1},\Lambda_n$&$B_n$&$\tilde\Lambda_{1'},\ldots,\tilde\Lambda_{(n-1)'},2\tilde\Lambda_{n'}$&Shape\\
\hline
$D_{n+1}$&$\Lambda_1$&$B_n$&$\tilde\Lambda_{1'}$&Inset\\
\hline
$D_{n+1}$&$\Lambda_n$&$B_n$&$\tilde\Lambda_{n'}$&Shifted Shape\\
\hline
$E_6$&$\Lambda_1$&$F_4$&$\tilde\Lambda_{4'}$&Swivel\\
\hline
$D_4$&$\Lambda_1$&$G_2$&$\tilde\Lambda_{2'}$&Shifted Shape\\
\hline
\end{tabular}
\caption{Correspondence between $\Lieg$, $\lambda$, $\Lieg(0)$, $\text{res}(\lambda)$, and $P$}\label{tab:iso fold filter,orbit}
\end{center}
\end{table}
%%%%%%%%%%

\begin{exam}\label{exam iso fold filter,orbit}
Let $\Lieg$ be of type $A_5$, and $\lambda=\Lambda_{2}$.
Recall from Example \ref{exam color filter,orbit} that the corresponding (connected, self-dual) d-complete poset $P_{\Lambda_2}$ is $Y_{2,4}$, and the $I$-colored d-complete poset $(P_{\Lambda_2},\le,\kappa,I)$ is the left diagram in Figure \ref{fig:c-d-comp}.
In this case, $\Lieg(0)$ is of type $C_3$, and $\text{res}(\Lambda_2)=\tilde{\Lambda}_{2'}$.
The $J$-colored d-complete poset $(P_{\Lambda_{2}},\le,\tilde{\kappa},J)$ is below.
The Hasse diagrams of $(\tilde{W}\tilde{\Lambda}_{2'},\le_w)$ and $(\tilde{\mathcal{F}}(P_{\Lambda_{2}}),\tilde{\unlhd})$ are given in Figure \ref{fig:fold color filter,orbit}.
\end{exam}

%%%%%%%%%%
\begin{figure}[h]
\centering
\vspace{10pt}
%WinTpicVersion4.32a
{\unitlength 0.1in%
\begin{picture}(17.3700,32.0000)(-3.3700,-33.2700)%
% LINE 2 0 3 0 Black White  
% 28 300 300 500 500 700 700 900 900 1100 1100 1300 1300 1400 1500 1400 2100 1300 2300 1100 2500 900 2700 700 2900 500 3100 300 3300 500 2900 300 2700 300 2500 500 2300 700 2300 900 2500 600 2100 600 1500 700 1300 900 1100 500 1300 300 1100 300 900 500 700
% 
\special{pn 8}%
\special{pa 300 300}%
\special{pa 500 500}%
\special{fp}%
\special{pa 700 700}%
\special{pa 900 900}%
\special{fp}%
\special{pa 1100 1100}%
\special{pa 1300 1300}%
\special{fp}%
\special{pa 1400 1500}%
\special{pa 1400 2100}%
\special{fp}%
\special{pa 1300 2300}%
\special{pa 1100 2500}%
\special{fp}%
\special{pa 900 2700}%
\special{pa 700 2900}%
\special{fp}%
\special{pa 500 3100}%
\special{pa 300 3300}%
\special{fp}%
\special{pa 500 2900}%
\special{pa 300 2700}%
\special{fp}%
\special{pa 300 2500}%
\special{pa 500 2300}%
\special{fp}%
\special{pa 700 2300}%
\special{pa 900 2500}%
\special{fp}%
\special{pa 600 2100}%
\special{pa 600 1500}%
\special{fp}%
\special{pa 700 1300}%
\special{pa 900 1100}%
\special{fp}%
\special{pa 500 1300}%
\special{pa 300 1100}%
\special{fp}%
\special{pa 300 900}%
\special{pa 500 700}%
\special{fp}%
% STR 2 0 3 0 Black White  
% 4 200 3350 200 3400 5 0 0 0
% \scriptsize$(0,1,0)$
\put(2.0000,-34.0000){\makebox(0,0){\scriptsize$(0,1,0)$}}%
% STR 2 0 3 0 Black White  
% 4 600 2950 600 3000 5 0 0 0
% \scriptsize$(1,-1,1)$
\put(6.0000,-30.0000){\makebox(0,0){\scriptsize$(1,-1,1)$}}%
% STR 2 0 3 0 Black White  
% 4 1000 2550 1000 2600 5 0 0 0
% \scriptsize$(1,1,-1)$
\put(10.0000,-26.0000){\makebox(0,0){\scriptsize$(1,1,-1)$}}%
% STR 2 0 3 0 Black White  
% 4 1400 2150 1400 2200 5 0 0 0
% \scriptsize$(2,-1,0)$
\put(14.0000,-22.0000){\makebox(0,0){\scriptsize$(2,-1,0)$}}%
% STR 2 0 3 0 Black White  
% 4 1400 1350 1400 1400 5 0 0 0
% \scriptsize$(-2,1,0)$
\put(14.0000,-14.0000){\makebox(0,0){\scriptsize$(-2,1,0)$}}%
% STR 2 0 3 0 Black White  
% 4 1000 950 1000 1000 5 0 0 0
% \scriptsize$(-1,-1,1)$
\put(10.0000,-10.0000){\makebox(0,0){\scriptsize$(-1,-1,1)$}}%
% STR 2 0 3 0 Black White  
% 4 600 550 600 600 5 0 0 0
% \scriptsize$(-1,1,-1)$
\put(6.0000,-6.0000){\makebox(0,0){\scriptsize$(-1,1,-1)$}}%
% STR 2 0 3 0 Black White  
% 4 200 150 200 200 5 0 0 0
% \scriptsize$(0,-1,0)$
\put(2.0000,-2.0000){\makebox(0,0){\scriptsize$(0,-1,0)$}}%
% STR 2 0 3 0 Black White  
% 4 200 950 200 1000 5 0 0 0
% \scriptsize$(1,0,-1)$
\put(2.0000,-10.0000){\makebox(0,0){\scriptsize$(1,0,-1)$}}%
% STR 2 0 3 0 Black White  
% 4 600 1350 600 1400 5 0 0 0
% \scriptsize$(1,-2,1)$
\put(6.0000,-14.0000){\makebox(0,0){\scriptsize$(1,-2,1)$}}%
% STR 2 0 3 0 Black White  
% 4 600 2150 600 2200 5 0 0 0
% \scriptsize$(-1,2,-1)$
\put(6.0000,-22.0000){\makebox(0,0){\scriptsize$(-1,2,-1)$}}%
% STR 2 0 3 0 Black White  
% 4 200 2550 200 2600 5 0 0 0
% \scriptsize$(-1,0,1)$
\put(2.0000,-26.0000){\makebox(0,0){\scriptsize$(-1,0,1)$}}%
% STR 2 0 3 0 Black White  
% 4 500 250 500 350 5 0 0 0
% $\tilde{s}_{2'}$
\put(5.0000,-3.5000){\makebox(0,0){$\tilde{s}_{2'}$}}%
% STR 2 0 3 0 Black White  
% 4 700 1700 700 1800 5 0 0 0
% $\tilde{s}_{2'}$
\put(7.0000,-18.0000){\makebox(0,0){$\tilde{s}_{2'}$}}%
% STR 2 0 3 0 Black White  
% 4 1100 2250 1100 2350 5 0 0 0
% $\tilde{s}_{2'}$
\put(11.0000,-23.5000){\makebox(0,0){$\tilde{s}_{2'}$}}%
% STR 2 0 3 0 Black White  
% 4 900 650 900 750 5 0 0 0
% $\tilde{s}_{3'}$
\put(9.0000,-7.5000){\makebox(0,0){$\tilde{s}_{3'}$}}%
% STR 2 0 3 0 Black White  
% 4 500 1050 500 1150 5 0 0 0
% $\tilde{s}_{3'}$
\put(5.0000,-11.5000){\makebox(0,0){$\tilde{s}_{3'}$}}%
% STR 2 0 3 0 Black White  
% 4 300 2250 300 2350 5 0 0 0
% $\tilde{s}_{3'}$
\put(3.0000,-23.5000){\makebox(0,0){$\tilde{s}_{3'}$}}%
% STR 2 0 3 0 Black White  
% 4 700 2650 700 2750 5 0 0 0
% $\tilde{s}_{3'}$
\put(7.0000,-27.5000){\makebox(0,0){$\tilde{s}_{3'}$}}%
% STR 2 0 3 0 Black White  
% 4 1300 1050 1300 1150 5 0 0 0
% $\tilde{s}_{2'}$
\put(13.0000,-11.5000){\makebox(0,0){$\tilde{s}_{2'}$}}%
% STR 2 0 3 0 Black White  
% 4 300 3050 300 3150 5 0 0 0
% $\tilde{s}_{2'}$
\put(3.0000,-31.5000){\makebox(0,0){$\tilde{s}_{2'}$}}%
% STR 2 0 3 0 Black White  
% 4 1500 1700 1500 1800 5 0 0 0
% $\tilde{s}_{1'}$
\put(15.0000,-18.0000){\makebox(0,0){$\tilde{s}_{1'}$}}%
% STR 2 0 3 0 Black White  
% 4 900 2250 900 2350 5 0 0 0
% $\tilde{s}_{1'}$
\put(9.0000,-23.5000){\makebox(0,0){$\tilde{s}_{1'}$}}%
% STR 2 0 3 0 Black White  
% 4 500 2650 500 2750 5 0 0 0
% $\tilde{s}_{1'}$
\put(5.0000,-27.5000){\makebox(0,0){$\tilde{s}_{1'}$}}%
% STR 2 0 3 0 Black White  
% 4 300 650 300 750 5 0 0 0
% $\tilde{s}_{1'}$
\put(3.0000,-7.5000){\makebox(0,0){$\tilde{s}_{1'}$}}%
% STR 2 0 3 0 Black White  
% 4 700 1050 700 1150 5 0 0 0
% $\tilde{s}_{1'}$
\put(7.0000,-11.5000){\makebox(0,0){$\tilde{s}_{1'}$}}%
\end{picture}}%
\qquad\qquad
\input{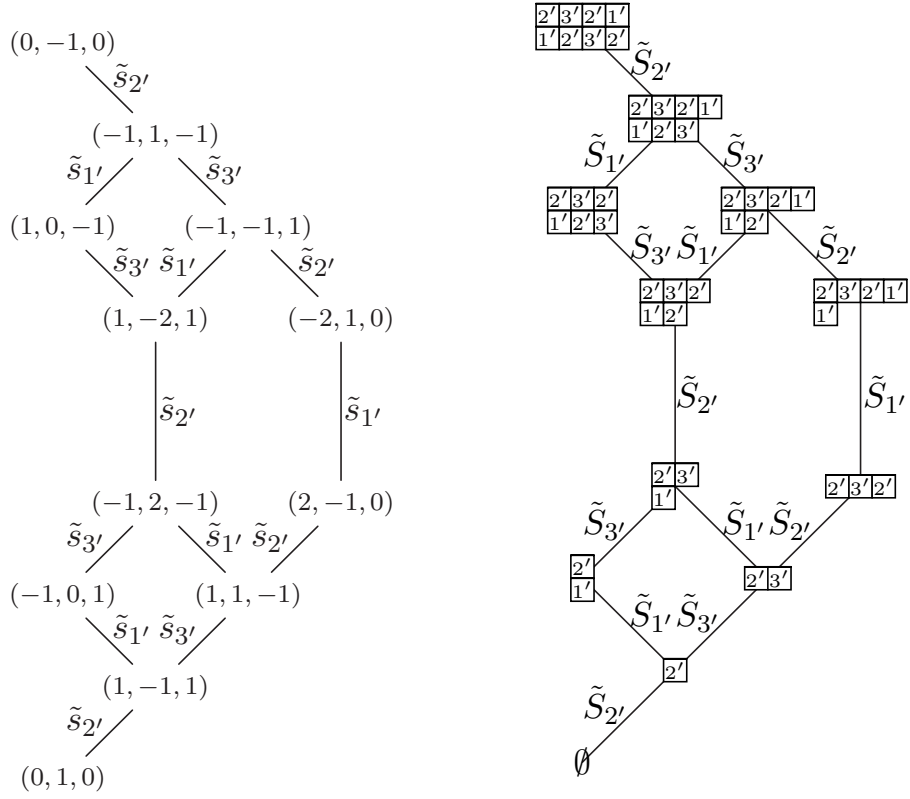}
\vspace{10pt}
\caption{$(\tilde{W}\tilde{\Lambda}_{2'},\le_w)$ and $(\tilde{\mathcal{F}}(P_{\Lambda_{2}}),\tilde{\unlhd})$ of type $C_3$}\label{fig:fold color filter,orbit}.
\end{figure}
%%%%%%%%%%

%%%%%%%%%%%%%%%%%%%%%%%%%%%%%%%%%%%%%%%%%%%%%%%%%%%%%%%%%%%%%%%%%%%%%%%%%%%%%%%%%%%%%%%%%%%%%%%%%%%%
%%%%%%%%%%%%%%%%%%%%%%%%%%%%%%%%%%%%%%%%%%%%%%%%%%%%%%%%%%%%%%%%%%%%%%%%%%%%%%%%%%%%%%%%%%%%%%%%%%%%

\section{Proof of Theorem \ref{iso fold filter,orbit}.}\label{Sec:main-proof}

\ytableausetup{mathmode,boxsize=1.8em}

Keep the notation and setting in the previous section.

\begin{defi}\label{S fold}
For $p\in J$, we define $\hat{S}_p:\mathcal{F}(P_{\lambda})\rightarrow \mathcal{F}(P_{\lambda})$ as follows:
\begin{enumerate}\renewcommand{\labelenumi}{(\arabic{enumi})}
\item If $p$ satisfies the orthogonality condition, then
$$\hat{S}_p:=\prod_{k\in p}S_k;$$
we see by Lemma \ref{action filter,orbit} that $\hat{S}_p$ does not depend on the order of the product of $S_k$'s.
\item If $p$ does not satisfy the orthogonality condition, that is, if $\Lieg$ is of type $A_{2n}$ and $p=\{n,n+1\}$ (see page \pageref{not orth}), then
$$\hat{S}_p:= S_n S_{n+1} S_n = S_{n+1} S_n S_{n+1} ;$$
the second equality follows from Lemma \ref{action filter,orbit}, together with $s_n s_{n+1} s_n = s_{n+1} s_n s_{n+1}$.
\end{enumerate}
\end{defi}

We need the following fact to prove Lemma \ref{property S fold} below.

\begin{prop}[{\cite[page 23]{Dave}}]\label{property filter}
Let $P$ be an arbitrary poset, and let $F\in \mathcal{F}(P)$ be an order filter of $P$.
\begin{enumerate}\renewcommand{\labelenumi}{(\arabic{enumi})}
\item[(1)]\ For $x\in F$, $x$ is a minimal element of $F$ if and only if $F\setminus \{x\}$ is an order filter.
\item[(2)]\ For $x\in F$, $x$ is a maximal element of $P\setminus F$ if and only if $F\cup \{x\}$ is an order filter.
\end{enumerate}
\end{prop}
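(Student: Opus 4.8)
The plan is to prove both biconditionals by directly unwinding the definition of an order filter as an upward-closed set: $F \in \mathcal{F}(P)$ means precisely that $a \in F$ and $a \le b$ imply $b \in F$. Everything then reduces to checking the upward-closure of $F \setminus \{x\}$ (respectively $F \cup \{x\}$) in the forward direction and a short contrapositive in the reverse direction. I first remark that in part (2) the hypothesis should be read as $x \in P \setminus F$, since for $x \in F$ one has $F \cup \{x\} = F$ and the statement is vacuous; I prove it in that reading.

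For part (1), in the forward direction I would assume $x$ is minimal in $F$ and verify that $F \setminus \{x\}$ is upward-closed: for $a \in F \setminus \{x\}$ with $a \le b$, filterhood of $F$ already gives $b \in F$, so the only thing to exclude is $b = x$, which is impossible because then $a < x$ would contradict the minimality of $x$. For the converse I would argue contrapositively: if $x$ is not minimal, there is some $a \in F$ with $a < x$, and then upward-closure of $F \setminus \{x\}$ would force $x \in F \setminus \{x\}$, a contradiction. Part (2) is entirely parallel, with \emph{minimal in $F$} replaced by \emph{maximal in $P \setminus F$} and the boundary case $a = x$ tracked in place of $b = x$: in the forward direction, any element strictly above $x$ must lie in $F$ by maximality, which yields upward-closure of $F \cup \{x\}$; in the converse direction, a strict upper bound of $x$ lying outside $F$ would be pulled into $F \cup \{x\}$ and hence into $F$, contradicting maximality.

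Since the whole argument is a mechanical case analysis resting on the single property of upward-closure, I expect no genuine obstacle: the statement is purely order-theoretic and uses none of the d-complete or Lie-theoretic structure developed elsewhere in the paper. The only point that requires a little attention is the bookkeeping of the boundary cases $b = x$ and $a = x$, which the case splits indicated above dispatch cleanly.
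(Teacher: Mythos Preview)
Your argument is correct and is the standard elementary verification from the definition of an order filter; you also rightly flag and fix the typo in part~(2), where the hypothesis must be $x\in P\setminus F$ rather than $x\in F$. Note, however, that the paper does not supply its own proof of this proposition: it is simply cited from \cite[page~23]{Dave} and used as a black box, so there is no in-paper argument to compare against---your direct unwinding of upward-closure is exactly what any proof of this fact looks like.
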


\begin{lemm}\label{property S fold}
Let $\mu\in W\lambda$, and set $F:=f(\mu)\in \mathcal{F}(P_\lambda)$.
It holds that
\begin{eqnarray}\label{eq:p-folds}
\tilde{S}_p(F)=\hat{S}_p (F) \text{ for all } p\in J.
\end{eqnarray}
\end{lemm}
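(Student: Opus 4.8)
The plan is to reduce the identity $\tilde{S}_p(F) = \hat{S}_p(F)$ to the level of the Weyl group orbit via the isomorphism $f$, where both sides become operations that I can compute explicitly. First I would observe that, by Corollary~\ref{action filter,orbit}, $\hat{S}_p(F) = f(\hat{s}_p(\mu))$: indeed, if $p$ satisfies the orthogonality condition then $\hat{S}_p = \prod_{k\in p} S_k$ corresponds under $f$ to $\prod_{k\in p} s_k = \hat{s}_p$, and similarly in the non-orthogonal case $\hat{S}_p = S_n S_{n+1} S_n$ corresponds to $s_n s_{n+1} s_n = \hat{s}_p$. So the right-hand side of \eqref{eq:p-folds} is the image under $f$ of $\hat{s}_p(\mu)$. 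It therefore suffices to show that $\tilde{S}_p(f(\mu)) = f(\hat{s}_p(\mu))$.

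Next I would analyze the left-hand side. By Lemma~\ref{sign coincide}(1), for $\mu\in\hat W\lambda$ and $p\in J$ all the values $\mu(h_i)$, $i\in p$, have the same sign (weakly), and since $\lambda$ is minuscule these values lie in $\{-1,0,1\}$. Split into cases. If $\mu(h_i)=0$ for all $i\in p$, then $\hat{s}_p(\mu) = \mu$; on the filter side this should force $A_{\tilde\kappa}$-saturation and $R_{\tilde\kappa}$-saturation of $F$ with respect to color $p$, so that $\tilde S_p(F) = F$ as well — this needs the fact that $\mu(h_i)=0$ for all $i\in p$ translates, via Corollary~\ref{invf} and the cover-relation characterization in Proposition~\ref{color filter,orbit}, into the statement that neither adding nor removing a $\tilde\kappa^{-1}(\{p\})$-element keeps $F$ a filter. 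If instead $\mu(h_i)\ne 0$ for some $i\in p$, say all nonzero values equal $+1$ (the $-1$ case is symmetric, or follows by applying the involution), then I want to show that $\tilde S_p$ acts on $F$ by replacing $F$ with $A_{\tilde\kappa}$-enlargement along color $p$ — equivalently, adding $\tilde c_p$-many cells of color $p$ as dictated by $\hat s_p(\mu) = \mu - \sum_{i\in p}\mu(h_i)\beta_p$-type shifts — and that the resulting set is exactly $f(\hat s_p(\mu))$.

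For this last point the cleanest route is to use $\tilde g$ (the inverse of $\tilde f$) together with the height/coefficient bookkeeping already set up in Section~\ref{Sec:J-color}: by Lemma~\ref{property f-invo,c-d-comp} the symmetric difference of $F$ and $\tilde S_p(F)$ has at most one element, and iterating the orthogonality-condition computation in Lemma~\ref{sign coincide}(2) one sees $\hat s_p$ changes $\mu$ by moving it along $\le_w$ through either one or (in the non-orthogonal $A_{2n}$ case) three simple reflections, each of which corresponds under $f$ to a one-element $S_k$-move by Corollary~\ref{action filter,orbit}. Composing these $S_k$-moves and comparing with the definition of $\tilde S_p$ applied to the $J$-colored poset — where $\tilde\kappa^{-1}(\{p\}) = \bigcup_{i\in p}\kappa^{-1}(\{i\})$ — gives the equality; the key compatibility is that $A_{\tilde\kappa}(F)\setminus F$ consists precisely of the union over $i\in p$ of the single cells $A_i$ would add, and likewise for $R$.

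The main obstacle I anticipate is the bookkeeping in the non-orthogonal case ($\Lieg$ of type $A_{2n}$, $p=\{n,n+1\}$): there $\hat s_p = s_n s_{n+1} s_n$ is a length-three element, the intermediate weights $s_n(\mu)$ and $s_{n+1}s_n(\mu)$ need not lie in $\hat W\lambda$, and one must check carefully that the three successive $S$-moves $S_n$, $S_{n+1}$, $S_n$ on $F$ land back inside $\tilde{\mathcal F}(P_\lambda)$ and assemble to the single color-$p$ operation $\tilde S_p$ of Definition~\ref{filter involution}; matching $A_{\tilde\kappa}\setminus F$ and $R_{\tilde\kappa}$ against this three-step composition, and verifying the "otherwise" branch of $\tilde S_p$ never triggers spuriously, is where the real work lies. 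I would handle the orthogonal case in full and then indicate that the non-orthogonal case follows by the same method using the explicit three-term expression for $\hat S_p$ and the identity $s_ns_{n+1}s_n=s_{n+1}s_ns_{n+1}$.
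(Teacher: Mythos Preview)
Your proposal has two genuine gaps, and the step you label ``key compatibility'' is precisely the content of the lemma rather than a consequence of prior results.

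\textbf{First gap.} The lemma is stated for arbitrary $\mu\in W\lambda$; since $f$ is a bijection onto $\mathcal{F}(P_\lambda)$, this is equivalent to the purely combinatorial assertion $\tilde{S}_p(F)=\hat{S}_p(F)$ for \emph{every} order filter $F$. Your reduction, however, invokes Lemma~\ref{sign coincide}, which requires $\mu\in\hat{W}\lambda$. For $\mu\notin\hat{W}\lambda$ the values $\mu(h_i)$, $i\in p$, need not share a sign, so your case split collapses. The paper avoids this entirely by never leaving the filter side: it analyzes, for each $k\in p$, whether $S_k(F)$ adds a cell, removes a cell, or fixes $F$, and argues directly from the poset combinatorics. (Relatedly, you cannot use $\tilde{g}$ here: it is only defined on $\tilde{\mathcal{F}}(P_\lambda)$, and membership of $\tilde{S}_p(F)$ in $\tilde{\mathcal{F}}(P_\lambda)$ is established only \emph{after} this lemma, in Lemma~\ref{action fold filter,orbit}.)

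\textbf{Second gap.} You invoke Lemma~\ref{property f-invo,c-d-comp} to claim that the symmetric difference of $F$ and $\tilde{S}_p(F)$ has at most one element. That lemma applies to colored d-complete posets in the sense of Proposition~\ref{colored d-comp}; the $J$-coloring $\tilde{\kappa}$ does not satisfy those hypotheses (distinct elements of the top tree can share a $J$-color), and in fact the symmetric difference is typically of size $\#p$ or larger. Your ensuing count ``either one or (in the non-orthogonal case) three simple reflections'' also omits the orthogonal cases $\#p=2$ and $\#p=3$, where $\hat{s}_p$ is a product of two or three commuting simple reflections.

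\textbf{The missing combinatorics.} Your ``key compatibility'' --- that $\tilde{A}_p(F)\setminus F$ is exactly the union over $i\in p$ of the single cells each $A_i$ would add, and likewise for $\tilde{R}_p$ --- is not a formality: one must rule out that adding a cell of color $i\in p$ enables the addition of a further cell of $J$-color $p$. This is where the paper does the real work, using Proposition~\ref{property I-c-d-comp} (colors of covering pairs are Dynkin-adjacent) together with orthogonality of $p$ to exclude such cascades in the orthogonal case, and falling back on the explicit rectangular Young diagram description of $P_\lambda$ in the non-orthogonal $A_{2n}$ case. Your outline gestures at this step but does not supply it.
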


\begin{proof}
First, we assume that $p\in J$ satisfies the orthogonality condition.
The case that $\# p=1$ is easy.
Assume that $\# p=2$ (the proof for the case that $\# p=3$ is similar).
Let we write $p$ as: $p=\{i,j\}$, with $i,j\in I, i\not= j$.
We deduce by Lemma \ref{property f-invo,c-d-comp} that for each $k\in p=\{i,j\}$, $S_k(F)$ satisfies one of the following:
\begin{enumerate}
\renewcommand{\labelenumi}{(\roman{enumi})}
\item $S_k(F)=A_k(F)=F\sqcup \{x_k\}$ for some $x_k\in P_{\lambda}\setminus F$; in this case, $R_k(F)=F$.
\item $S_k(F)=R_k(F)=F\setminus \{x_k\}$ for some $x_k\in F$; in this case, $A_k(F)=F$.
\item $S_k(F)=A_k(F)=R_k(F)=F$.
\end{enumerate}

\noindent Here, we give a proof only for the case that both $S_i(F)$ and $S_j(F)$ satisfy (i); the proofs for the other cases are similar.
In this case, there exist $x_i,x_j\in P_{\lambda}\setminus F$ such that $S_i(F)=F\sqcup \{x_i\}$ and $S_j(F)=F\sqcup \{x_j\}$; note that $\kappa(x_i)=i$ and $\kappa(x_j)=j$.
By Definition \ref{S fold}, we have $S_j(F\sqcup \{x_i\})=S_jS_i(F)=S_iS_j(F)=S_i(F\sqcup \{x_j\})$.
Since $\kappa(x_i)=i$ and $i\not=j$, we see from the definition of $S_j$ that when we apply $S_j$ to $F\sqcup \{x_i\}$, $x_i$ is not removed.
Hence, $x_i\in S_j(F\sqcup \{x_i\})=S_jS_i(F)$.
Similarly, $x_j\in S_jS_i(F)$.
Since the symmetric difference of $S_jS_i(F)$ and $F$ has at most two element by Lemma \ref{property f-invo,c-d-comp}, we see that $S_jS_i(F)=F\sqcup \{x_i\}\sqcup \{x_j\}$.
Therefore, it suffices to show that $\tilde{S}_p(F)=F\sqcup \{x_i\}\sqcup \{x_j\}$.

Suppose, for a contradiction, that $F\supsetneq \tilde{R}_p(F)$.
Let $y$ be a minimal element of $F\setminus \tilde{R}_p(F)$.
Because $\tilde{R}_p(F)$ is an order filter by the definition of $\tilde{R}_p$, we deduce that $y$ is a minimal element of $F$.
Hence, by Proposition \ref{property filter} (1), $F\setminus \{y\}$ is an order filter of $P_{\lambda}$.
Note that $\tilde{\kappa}(y)=p$, and recall that $\tilde{\kappa}(y)=p$ if and only if $\kappa(y)=i$ or $\kappa(y)=j$.
Assume that $\kappa(y)=i$.
Since $F\setminus\{y\}$ is an order filter of $P_\lambda$ satisfying $F\setminus(F\setminus\{y\})=\{y\}\subseteq\kappa^{-1}(\{i\})$, we see by the definition of $R_i$ that $R_i(F)\not=F$.
Similarly, if $\kappa(y)=j$, then $R_j(F)\not=F$.
Thus we conclude that $R_i(F)\ne F$ or $R_j(F)\ne F$.
However, this contradicts the assumption that both $S_i(F)$ and $S_j(F)$ satisfy (i).
Therefore, $\tilde{R}_p(F)=F$, and hence $\tilde{S}_p(F)=\tilde{A}_p(F)$.
Since $F\sqcup \{x_i\}\sqcup \{x_j\} =S_jS_i(F)$ is an order filter of $P_\lambda$, we see by the definition of $\tilde{A}_p(F)$ that $F\sqcup \{x_i\}\sqcup \{x_j\}\subseteq \tilde{A}_p(F)=\tilde{S}_p(F)$.

Suppose, for a contradiction, that $\tilde{S}_p(F)\supsetneq F\sqcup \{x_i\}\sqcup \{x_j\}$.
Since $F\sqcup \{x_j\},F\sqcup \{x_i\}\in \mathcal{F}(P_\lambda)$, it follows from Proposition \ref{property filter} (1) that $x_i$ and $x_j$ are minimal elements of $F\sqcup \{x_i\}\sqcup \{x_j\}$.
Let $z$ be a maximal element of $\tilde{S}_p(F)\setminus (F\sqcup \{x_i\}\sqcup \{x_j\})$; note that $\tilde{\kappa}(z)=p$, which implies that $\kappa(z)\in p=\{i,j\}$.
If $z$ and $x_i$ are comparable, then $z\rightarrow x_i$ because $F\sqcup \{x_i\}\sqcup \{x_j\}$ is an order filter, and $x_i$ is a minimal element of $F\sqcup \{x_i\}\sqcup \{x_j\}$ as seen above.
By Proposition \ref{property I-c-d-comp}, $\kappa(z)\in\{i,j\}$ and $\kappa(x_i)=i$ are adjacent in the Dynkin diagram of $\Lieg$.
However, this contradicts that $p$ satisfies the orthogonality condition.
Thus, $z$ and $x_i$ are incomparable.
Similarly, we can show that $z$ and $x_j$ are incomparable.
Thus, $z$ is a maximal element of $\tilde{S}_p(F)\setminus F$.
Since $\tilde{S}_p(F)$ is an order filter of $P_\lambda$, we see that $z$ is a maximal element of $P_{\lambda}\setminus F$.
Hence, by Proposition \ref{property filter} (2), $F\sqcup \{ z\}$ is an order filter of $P_{\lambda}$.
Since $\kappa(z)\in p =\{i,j\}$, we see by the definitions of $A_i$ and $A_j$ that $z$ is contained in either $A_i(F)$ or $A_j(F)$.
However, this contradicts the assumption that both $S_i(F)$ and $S_j(F)$ satisfy (i).
Therefore, we obtain $F\sqcup \{x_i\}\sqcup \{x_j\}=\tilde{S}_p(F)$, as desired.

Next, we assume that $p$ does not satisfy the orthogonality condition, that is, $\Lieg$ is of type $A_{2n}$ and $p=\{n,n+1\}$.
Let $\lambda=\Lambda_i$.
In this case, the corresponding d-complete poset $P_\lambda$ is $Y_{i,2n-i+1}$ (see Example \ref{diagram} (1)), and its $I$-coloring $\kappa:P_\lambda\rightarrow I$ is given as follows (see also Figure \ref{fig:c-d-comp}):
\vspace{10pt}
\ytableausetup{mathmode,boxsize=2.4em}
\begin{center}
$(P_{\lambda},\le,\kappa,I)=$
\begin{ytableau}
i&\none[\dots]&\ast&\ast&\ast&\ast&\none[\dots]&2n\\
\none[\vdots]&\none&\none[\vdots]&\none[\vdots]&\none[\vdots]&\none[\vdots]&\none&\none[\vdots]\\
\ast&\none[\dots]&n&\scriptstyle n+1&\scriptstyle n+2&\scriptstyle n+3&\none[\dots]&\ast\\
\ast&\none[\dots]&\scriptstyle n-1&n&\scriptstyle n+1&\scriptstyle n+2&\none[\dots]&\ast\\
\ast&\none[\dots]&\scriptstyle n-2&\scriptstyle n-1&n&\scriptstyle n+1&\none[\dots]&\ast\\
\none[\vdots]&\none&\none[\vdots]&\none[\vdots]&\none[\vdots]&\none[\vdots]&\none&\none[\vdots]\\
1&\none[\dots]&\ast&\ast&\ast&\ast&\none[\dots]&\scriptscriptstyle 2n-i+1\\
\end{ytableau}\ .
\end{center}
\vspace{10pt}

\noindent In this proof, the cells having the color $n$ or $n+1$ are important; if $1\le i\le n$ (resp., $n+1\le i\le 2n$), then $\kappa^{-1}(\{n\})=\{(1,n-i+1),(2,n-i+2),\ldots,(i,n)\}$ and $\kappa^{-1}(\{n+1\})=\{(1,n-i+2),(2,n-i+3),\ldots,(i,n+1)\}$ (resp., $\kappa^{-1}(\{n\})=\{(i-n+1,1),(i-n+2,2),\ldots,(n+1,2n-i+1)\}$ and $\kappa^{-1}(\{n+1\})=\{(i-n,1),(i-n+1,2),\ldots,(n,2n-i+1)\}$).
Notice that the subset $\kappa^{-1}(\{n,n+1\})\subset P_\lambda$ is a totally order set.
Similarly to the case that $p$ satisfies the orthogonality condition, each of $S_n(F)$ and $S_{n+1}(F)$ satisfies one of (i),(ii),(iii).
Suppose, for a contradiction, that both $S_n(F)$ and $S_{n+1}(F)$ satisfy (i).
Then, there exist $x_n,x_{n+1}$ such that both $x_n$ and $x_{n+1}$ are maximal elements of $P_\lambda\setminus F$, and $\kappa(x_n)=n,\kappa(x_{n+1})=n+1$.
However, this contradict the fact that $\kappa^{-1}(\{n,n+1\})$ is a totally order set.
Therefore, the case that both $S_n(F)$ and $S_{n+1}(F)$ satisfy (i) does not happen.
Similarly, we deduce that the case that both $S_n(F)$ and $S_{n+1}(F)$ satisfy (ii) does not happen.
So, it suffices to consider the other 7 cases.

Now, we give a proof only for the case that $S_n(F)$ satisfies (i), and $S_{n+1}(F)$ satisfies (iii); the proofs for the other cases are similar.
Then, under the description mentioned at the end of Section 2, $F$ has a ``block'' of the following form:
\begin{center}
$F=$
\ytableausetup{mathmode,boxsize=1.8em}
\begin{ytableau}
\none&\none[\vdots]&\none[\vdots]&\none[\vdots]&\none[\vdots]&\none\\
\none[\dots]&n&\scriptstyle n+1&\scriptstyle n+2&*(lightgray)\scriptstyle n+3&\none[\dots]\\
\none[\dots]&\scriptstyle n-1&*(gray)n&*(gray)\scriptstyle n+1&*(gray)\scriptstyle n+2&\none[\dots]\\
\none[\dots]&*(lightgray)\scriptstyle n-2&*(gray)\scriptstyle n-1&*(gray)n&*(gray)\scriptstyle n+1&\none[\dots]\\
\none&\none[\vdots]&\none[\vdots]&\none[\vdots]&\none[\vdots]&\none
\end{ytableau}
\end{center}

\noindent Here, each element corresponding to the right-gray cell (with the color $n+3$ or $n-2$) is not necessarily an element of $F$.
Then, $\hat{S}_p(F)$ and $\tilde{S}_p(F)$ are as follows:

\begin{eqnarray*}
\hat{S}_p\left(\vbox to 25pt{} \begin{ytableau}
\none&\none[\vdots]&\none[\vdots]&\none[\vdots]&\none[\vdots]&\none\\
\none[\dots]&n&\scriptstyle n+1&\scriptstyle n+2&*(lightgray)\scriptstyle n+3&\none[\dots]\\
\none[\dots]&\scriptstyle n-1&*(gray)n&*(gray)\scriptstyle n+1&*(gray)\scriptstyle n+2&\none[\dots]\\
\none[\dots]&*(lightgray)\scriptstyle n-2&*(gray)\scriptstyle n-1&*(gray)n&*(gray)\scriptstyle n+1&\none[\dots]\\
\none&\none[\vdots]&\none[\vdots]&\none[\vdots]&\none[\vdots]&\none
\end{ytableau}\right)
&=&S_nS_{n+1}S_n\left(\vbox to 25pt{} \begin{ytableau}
\none&\none[\vdots]&\none[\vdots]&\none[\vdots]&\none[\vdots]&\none\\
\none[\dots]&n&\scriptstyle n+1&\scriptstyle n+2&*(lightgray)\scriptstyle n+3&\none[\dots]\\
\none[\dots]&\scriptstyle n-1&*(gray)n&*(gray)\scriptstyle n+1&*(gray)\scriptstyle n+2&\none[\dots]\\
\none[\dots]&*(lightgray)\scriptstyle n-2&*(gray)\scriptstyle n-1&*(gray)n&*(gray)\scriptstyle n+1&\none[\dots]\\
\none&\none[\vdots]&\none[\vdots]&\none[\vdots]&\none[\vdots]&\none
\end{ytableau}\right)\\
&=&S_nS_{n+1}
\left(\vbox to 25pt{} \begin{ytableau}
\none&\none[\vdots]&\none[\vdots]&\none[\vdots]&\none[\vdots]&\none\\
\none[\dots]&n&\scriptstyle n+1&\scriptstyle n+2&*(lightgray)\scriptstyle n+3&\none[\dots]\\
\none[\dots]&\scriptstyle n-1&n&*(gray)\scriptstyle n+1&*(gray)\scriptstyle n+2&\none[\dots]\\
\none[\dots]&*(lightgray)\scriptstyle n-2&*(gray)\scriptstyle n-1&*(gray)n&*(gray)\scriptstyle n+1&\none[\dots]\\
\none&\none[\vdots]&\none[\vdots]&\none[\vdots]&\none[\vdots]&\none
\end{ytableau}\right)\\
&=&S_n
\left(\vbox to 25pt{} \begin{ytableau}
\none&\none[\vdots]&\none[\vdots]&\none[\vdots]&\none[\vdots]&\none\\
\none[\dots]&n&\scriptstyle n+1&\scriptstyle n+2&*(lightgray)\scriptstyle n+3&\none[\dots]\\
\none[\dots]&\scriptstyle n-1&n&\scriptstyle n+1&*(gray)\scriptstyle n+2&\none[\dots]\\
\none[\dots]&*(lightgray)\scriptstyle n-2&*(gray)\scriptstyle n-1&*(gray)n&*(gray)\scriptstyle n+1&\none[\dots]\\
\none&\none[\vdots]&\none[\vdots]&\none[\vdots]&\none[\vdots]&\none
\end{ytableau}\right)\\
&=&
\begin{ytableau}
\none&\none[\vdots]&\none[\vdots]&\none[\vdots]&\none[\vdots]&\none\\
\none[\dots]&n&\scriptstyle n+1&\scriptstyle n+2&*(lightgray)\scriptstyle n+3&\none[\dots]\\
\none[\dots]&\scriptstyle n-1&n&\scriptstyle n+1&*(gray)\scriptstyle n+2&\none[\dots]\\
\none[\dots]&*(lightgray)\scriptstyle n-2&*(gray)\scriptstyle n-1&*(gray)n&*(gray)\scriptstyle n+1&\none[\dots]\\
\none&\none[\vdots]&\none[\vdots]&\none[\vdots]&\none[\vdots]&\none
\end{ytableau}\ .
\end{eqnarray*}

\begin{eqnarray*}
\tilde{S}_p\left(\vbox to 25pt{} \begin{ytableau}
\none&\none[\vdots]&\none[\vdots]&\none[\vdots]&\none[\vdots]&\none\\
\none[\dots]&n&\scriptstyle n+1&\scriptstyle n+2&*(lightgray)\scriptstyle n+3&\none[\dots]\\
\none[\dots]&\scriptstyle n-1&*(gray)n&*(gray)\scriptstyle n+1&*(gray)\scriptstyle n+2&\none[\dots]\\
\none[\dots]&*(lightgray)\scriptstyle n-2&*(gray)\scriptstyle n-1&*(gray)n&*(gray)\scriptstyle n+1&\none[\dots]\\
\none&\none[\vdots]&\none[\vdots]&\none[\vdots]&\none[\vdots]&\none
\end{ytableau}\right)=
\begin{ytableau}
\none&\none[\vdots]&\none[\vdots]&\none[\vdots]&\none[\vdots]&\none\\
\none[\dots]&n&\scriptstyle n+1&\scriptstyle n+2&*(lightgray)\scriptstyle n+3&\none[\dots]\\
\none[\dots]&\scriptstyle n-1&n&\scriptstyle n+1&*(gray)\scriptstyle n+2&\none[\dots]\\
\none[\dots]&*(lightgray)\scriptstyle n-2&*(gray)\scriptstyle n-1&*(gray)n&*(gray)\scriptstyle n+1&\none[\dots]\\
\none&\none[\vdots]&\none[\vdots]&\none[\vdots]&\none[\vdots]&\none
\end{ytableau}
\end{eqnarray*}

\noindent Thus we obtain $\tilde{S}_p(F)=\hat{S}_p(F)$, as desired.
\end{proof}

\begin{lemm}\label{action fold filter,orbit}
For $\mu\in \hat{W}\lambda$ and $p\in J$,
$$\tilde{f}(\tilde{s}_p(\text{res}(\mu)))=\tilde{S}_p(\tilde{f}(\text{res}(\mu))).$$
In particular,
\begin{eqnarray}\label{tilde-F}
\tilde{\mathcal{F}}(P_{\lambda})=\{\tilde{S}_{p_n}\cdots\tilde{S}_{p_2}\tilde{S}_{p_1}(f(\lambda))\,|\,n\ge 0,p_k\in J(1\le k\le n)\}.
\end{eqnarray}
\end{lemm}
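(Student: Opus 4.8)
The plan is to chain together four facts already in hand: Lemma~\ref{property weyl fold}, which pushes $\tilde{s}_p$ through $\text{res}$ to $\hat{s}_p$; the defining property $\tilde{f}\circ\text{res}=f$ of $\tilde{f}$; Corollary~\ref{action filter,orbit}, which turns each simple reflection $s_i$ into the involution $S_i$; and Lemma~\ref{property S fold}, which identifies $\hat{S}_p$ with $\tilde{S}_p$ on filters of the form $f(\mu)$. Fix $\mu\in\hat{W}\lambda$ and $p\in J$. Since $\hat{s}_p\in\hat{W}$ (Proposition~\ref{hat W and tilde W}), we have $\hat{s}_p(\mu)\in\hat{W}\lambda$, so by the definition of $\tilde{f}$ and Lemma~\ref{property weyl fold}, $\tilde{f}(\tilde{s}_p(\text{res}(\mu)))=\tilde{f}(\text{res}(\hat{s}_p(\mu)))=f(\hat{s}_p(\mu))$. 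Now write $\hat{s}_p=s_{k_1}s_{k_2}\cdots s_{k_m}$ as the product of simple reflections prescribed in the definition of $\hat{s}_p$ (so $m=\#p$ and the $s_{k}$ commute when $p$ satisfies the orthogonality condition, while $\hat{s}_p=s_ns_{n+1}s_n$ in the exceptional $A_{2n}$ case); because $W\lambda$ is stable under $W$, every partial product $s_{k_j}s_{k_{j+1}}\cdots s_{k_m}(\mu)$ lies in $W\lambda$, so applying Corollary~\ref{action filter,orbit} once for each factor, building up from $s_{k_m}$, gives $f(\hat{s}_p(\mu))=S_{k_1}S_{k_2}\cdots S_{k_m}(f(\mu))=\hat{S}_p(f(\mu))$ by Definition~\ref{S fold}. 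Since $f(\mu)$ is a filter of the form required in Lemma~\ref{property S fold}, that lemma gives $\hat{S}_p(f(\mu))=\tilde{S}_p(f(\mu))$, and $f(\mu)=\tilde{f}(\text{res}(\mu))$ again by the definition of $\tilde{f}$. Collecting these equalities,
\begin{align*}
\tilde{f}(\tilde{s}_p(\text{res}(\mu))) &= \tilde{f}(\text{res}(\hat{s}_p(\mu))) = f(\hat{s}_p(\mu)) = \hat{S}_p(f(\mu))\\
&= \tilde{S}_p(f(\mu)) = \tilde{S}_p(\tilde{f}(\text{res}(\mu))),
\end{align*}
which is the asserted identity.

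For the description (\ref{tilde-F}) of $\tilde{\mathcal{F}}(P_{\lambda})$: since $\tilde{W}=\langle\tilde{s}_p\mid p\in J\rangle$, every element of $\tilde{W}\text{res}(\lambda)$ has the form $\tilde{s}_{p_n}\cdots\tilde{s}_{p_1}(\text{res}(\lambda))$ for some $n\ge0$ and $p_1,\ldots,p_n\in J$, and by Remark~\ref{hat orbit} each of its initial segments $\tilde{s}_{p_k}\cdots\tilde{s}_{p_1}(\text{res}(\lambda))$ equals $\text{res}(\mu)$ for a suitable $\mu\in\hat{W}\lambda$; hence the identity just proved applies at every stage. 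Iterating it, together with $\tilde{f}(\text{res}(\lambda))=f(\lambda)$ (the case $\mu=\lambda$), yields $\tilde{f}(\tilde{s}_{p_n}\cdots\tilde{s}_{p_1}(\text{res}(\lambda)))=\tilde{S}_{p_n}\cdots\tilde{S}_{p_1}(f(\lambda))$. Since $\tilde{\mathcal{F}}(P_{\lambda})=\text{Im}(\tilde{f})$ is exactly the set of these values as $(p_1,\ldots,p_n)$ ranges over all finite sequences in $J$, this is precisely (\ref{tilde-F}).

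The real combinatorial content here lives entirely in Lemma~\ref{property S fold}, which is already proved; the present argument is just bookkeeping built on top of it, so I do not anticipate any serious obstacle. The only points needing a little care are that Corollary~\ref{action filter,orbit} must be invoked one simple reflection at a time — legitimate because the orbit $W\lambda$ is $W$-stable — and that in the non-orthogonality case one must use the length-three word $\hat{s}_p=s_ns_{n+1}s_n$ rather than a product over the orbit $p$, matching the corresponding clause of Definition~\ref{S fold}.
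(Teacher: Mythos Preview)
Your proof is correct and follows essentially the same chain of equalities as the paper's proof: Lemma~\ref{property weyl fold}, the definition of $\tilde{f}$, Corollary~\ref{action filter,orbit}, and Lemma~\ref{property S fold}, in the same order. You are a bit more explicit than the paper in two places---spelling out that Corollary~\ref{action filter,orbit} is applied one simple reflection at a time to pass from $f(\hat{s}_p(\mu))$ to $\hat{S}_p(f(\mu))$, and writing out the iteration that yields~(\ref{tilde-F})---but these are elaborations of the same argument, not a different route.
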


\begin{proof}
We compute that
\begin{eqnarray*}
\tilde{f}(\tilde{s}_p(\text{res}(\mu)))&=&\tilde{f}(\text{res}(\hat{s}_p (\mu)))\,\qquad(\rm by\ Lemma\ \ref{property weyl fold})\\
&=&f(\hat{s}_p (\mu))\,\qquad\qquad(\rm by\ the\ definition\ of\ \it\tilde{f})\\
&=&\hat{S}_p(f(\mu))\qquad\qquad(\rm by\ Corollary\ \ref{action filter,orbit})\\
&=&\hat{S}_p(\tilde{f}(\text{res}(\mu)))\qquad(\rm by\ the\ definition\ of\ \it \tilde{f})\\
&=&\tilde{S}_p(\tilde{f}(\text{res}(\mu)))\qquad(\rm by\ Lemma\ \ref{property S fold}).
\end{eqnarray*}
\end{proof}

\begin{proof}[Proof of Theorem \ref{iso fold filter,orbit}.]
(1) By the definitions of $\le_w$ and $\tilde{\unlhd}$, it suffices to show that for $\mu\in \hat{W}\lambda$ and $\ p\in J$, $\text{res}(\mu)<_w \tilde{s}_p(\text{res}(\mu))$ if and only if $\tilde{f}(\text{res}(\mu))\ \tilde{\lhd}\ \tilde{S}_p(\tilde{f}(\text{res}(\mu)))$.
First, we assume that $\text{res}(\mu)<_w \tilde{s}_p(\text{res}(\mu))=\text{res}(\hat{s}_p(\mu))$.
Because $\text{res}(\mu)(H_p)>0$, there exists $i \in p$ such that $\mu(h_i)>0$.
Then we deduce by Lemma \ref{sign coincide}(2) that $\mu <_w \hat{s}_p(\mu)$ in $(W\lambda,\le_w)$.
By the definition of $<_w$ and $<_s$, we have $\mu <_s \hat{s}_p(\mu)$ in $(W\lambda,\le_s)$.
So we compute
\begin{eqnarray*}
f(\mu)&\subsetneq&f(\hat{s}_p(\mu))\,\qquad\qquad(\rm by\ Proposition\ \ref{iso filter,orbit})\\
&=&\hat{S}_p(f(\mu))\qquad\qquad(\rm by\ Corollary\ \ref{action filter,orbit})\\
&=&\tilde{S}_p(\tilde{f}(\text{res}(\mu)))\qquad(\rm by\ the\ definition\ of\ \it \tilde{f}\  \rm and\  Lemma\  \ref{property S fold}).
\end{eqnarray*}
Therefore, we obtain $\tilde{f}(\text{res}(\mu))\ \tilde{\lhd}\ \tilde{S}_p(\tilde{f}(\text{res}(\mu)))$, as desire.

Next, we assume that $\tilde{f}(\text{res}(\mu))\ \tilde{\lhd}\ \tilde{S}_p(\tilde{f}(\text{res}(\mu)))$.
Then we have $\tilde{f}(\text{res}(\mu))\subsetneq\tilde{S}_p(\tilde{f}(\text{res}(\mu)))$.
Since $\tilde{f}(\text{res}(\mu))=f(\mu)$ and $\tilde{S}_p(\tilde{f}(\text{res}(\mu)))=f(\hat{s}_p(\mu))$ as seen above, we get $f(\mu)\subsetneq f(\hat{s}_p(\mu))$.
Hence, by Proposition \ref{iso filter,orbit}, $\mu<_s \hat{s}_p(\mu)$ in $(W\lambda,\le_s)$.
Write $\hat{s}_p(\mu)$ as: $\hat{s}_p(\mu)=\mu-\sum_{i\in p}m_i\alpha_i$; since $\mu<_s \hat{s}_p(\mu)$, we see that $m_i\ge 0$ for all $i\in I$, and $m:=\sum_{i\in p}m_i>0$.
Because $\tilde{s}_p(\text{res}(\mu))=\text{res}(\mu)-m\beta_p$, we obtain $\text{res}(\mu)<_w\tilde{s}_p(\text{res}(\mu))$, as desire.

(2) 
For $\mu_1,\mu_2\in \hat{W}\lambda$, We deduce
\begin{eqnarray*}
&&\text{res}(\mu_1)<_s \text{res}(\mu_2)\\
&\Leftrightarrow&\mu_1<_s \mu_2\qquad\qquad\qquad\qquad\ (\rm by\ Lemma\ \ref{bruhat fold})\\
&\Leftrightarrow&f(\mu_1)\subset f(\mu_2)\qquad\qquad\qquad(\rm by\ Proposition\ \ref{iso filter,orbit})\\
&\Leftrightarrow&\tilde{f}(\text{res}(\mu_1))\subset \tilde{f}(\text{res}(\mu_2))\qquad(\rm by\ the\ definition\ of\ \it \tilde{f}).
\end{eqnarray*}
\end{proof}

%%%%%%%%%%%%%%%%%%%%%%%%%%%%%%%%%%%%%%%%%%%%%%%%%%%%%%%%%%%%%%%%%%%%%%%%%%%%%%%%%%%%%%%%%%%%%%%%%%%%
%%%%%%%%%%%%%%%%%%%%%%%%%%%%%%%%%%%%%%%%%%%%%%%%%%%%%%%%%%%%%%%%%%%%%%%%%%%%%%%%%%%%%%%%%%%%%%%%%%%%
\section{Explicit description of $\tilde{\mathcal{F}}(P_{\lambda})$.}\label{Sec:tildeF}

Keep the notation and setting in Section \ref{Sec:J-color}.
We give an explicit description of $\tilde{\mathcal{F}}(P_{\lambda})$ in the case that $\Lieg$ is of type $A_n$; in fact, our description, Theorem \ref{Fexplicid} below, and its proof are essentially restatements of \cite[Theorem 1.1 and its proof\,]{Mo}; however, we give a proof (in terms of our notation) for the convenience of the readers.

Assume that $\Lieg$ is of type $A_n$, and $\lambda=\Lambda_m$ with $1 \le m \le (n+1)/2$.
We regard $P_{\lambda}$ as a rectangular Young diagram $Y_{m,n-m+1}$ (see Example \ref{diagram}).
Note that $\kappa((i,j)) = j-i+m$ and $\tilde{\kappa}((i,j)) = (\min\{j-i+m, i-j+n-m+1\})'$.

For $i,j,p\in\mathbb{Z}$, we set $[i,j] := \{k \in \mathbb{Z}\mid i \le k \le j\}$ and $\binom{[i,j]}{p} := \{I\subseteq [i,j] \mid  \#I = p\}$.

\begin{defi}\label{partition}
Let $Y \in \mathcal{F}(Y_{m,n-m+1})$.
For $1\le l\le m$, we set $\boldsymbol{k}_l := \#\{j \mid (l,j) \in Y\}$, and $\boldsymbol{k}(Y) := (\boldsymbol{k}_1,\ldots,\boldsymbol{k}_m)$.
Also, we set $\mathcal{I}(Y) := \{\boldsymbol{k}_i + m+1-i\mid i \in [1,m]\}, \overline{\mathcal{I}(Y)} := \{n+2-i\mid i \in \mathcal{I}(Y) \}$.
Observe that the map $\mathcal{I}: \mathcal{F}(Y_{m,n-m+1}) \to \binom{[1,n+1]}{m}, Y \mapsto \mathcal{I}(Y),$ is a bijection.
\end{defi}

\begin{lemm}\label{IandS}
Let $Y \in \mathcal{F}(Y_{m,n-m+1})$, and $k \in [1,n]$.
Then,
\begin{enumerate}\renewcommand{\labelenumi}{(\arabic{enumi})}
\item[(1)] $k \in \mathcal{I}(Y)$ and $k+1 \not\in \mathcal{I}(Y)$ if and only if $S_k(Y) \supset Y$;
\item[(2)] $k \not\in \mathcal{I}(Y)$ and $k+1 \in \mathcal{I}(Y)$ if and only if $S_k(Y) \subset Y$;
\item[(3)] $k,k+1 \in \mathcal{I}(Y)$ or $k,k+1 \not\in \mathcal{I}(Y)$ if and only if $S_k(Y) = Y$.
\end{enumerate}
\end{lemm}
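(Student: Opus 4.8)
The plan is to make $A_k$, $R_k$ and $S_k$ completely explicit on the rectangle $Y_{m,n-m+1}$ and then to read the statement off the beta-set encoding $\mathcal{I}$ of Definition \ref{partition}. Recall that the order filters of $Y_{m,n-m+1}$ are precisely the Young diagrams contained in the rectangle, i.e.\ the sets $\{(l,j)\mid 1\le l\le m,\ 1\le j\le \boldsymbol{k}_l\}$ for weakly decreasing $(\boldsymbol{k}_1,\dots,\boldsymbol{k}_m)$ with $\boldsymbol{k}_1\le n-m+1$, and that $\kappa^{-1}(\{k\})$ is the diagonal $\{(i,j)\mid j-i=k-m\}$, which is a chain by Proposition \ref{property c-d-comp}(3). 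Since such a Young diagram meets this diagonal in an initial (up-left) segment, it has at most one addable cell of color $k$ — call it $a_k$ when it exists — and at most one removable cell of color $k$ — call it $r_k$ when it exists — and no filter $F'\supseteq Y$ (resp.\ $F'\subseteq Y$) with $F'\setminus Y\subseteq\kappa^{-1}(\{k\})$ (resp.\ $Y\setminus F'\subseteq\kappa^{-1}(\{k\})$) can differ from $Y$ in more than one cell. Hence, straight from Definition \ref{filter involution}, $A_k(Y)=Y\sqcup\{a_k\}$ if $a_k$ exists and $A_k(Y)=Y$ otherwise, and likewise $R_k(Y)=Y\setminus\{r_k\}$ if $r_k$ exists and $R_k(Y)=Y$ otherwise. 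As $Y_{m,n-m+1}$ is d-complete, Lemma \ref{property f-invo,c-d-comp} forbids $a_k$ and $r_k$ from coexisting (otherwise $(A_k(Y)\setminus Y)\cup R_k(Y)$ would be a filter differing from $Y$ in the two cells $a_k,r_k$). Unwinding the definition of $S_k$ then gives the trichotomy: $S_k(Y)=Y\sqcup\{a_k\}\supsetneq Y$ if $a_k$ exists; $S_k(Y)=Y\setminus\{r_k\}\subsetneq Y$ if $r_k$ exists; and $S_k(Y)=Y$ if neither exists.

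It remains to detect $a_k$ and $r_k$ inside $\mathcal{I}(Y)=\{\beta_i:=\boldsymbol{k}_i+m+1-i\mid 1\le i\le m\}$, where $\beta_1>\dots>\beta_m$. Adopt the conventions $\boldsymbol{k}_0:=n-m+1$ (so $\beta_0=n+2$) and $\boldsymbol{k}_{m+1}:=0$ (so $\beta_{m+1}=0$). Adjoining a cell in row $i$ replaces $\boldsymbol{k}_i$ by $\boldsymbol{k}_i+1$, hence $\beta_i$ by $\beta_i+1$, the new cell $(i,\boldsymbol{k}_i+1)$ having color $(\boldsymbol{k}_i+1)-i+m=\beta_i$; this produces a Young diagram inside the rectangle exactly when $\beta_i+1\le\beta_{i-1}-1$, equivalently $\beta_i+1\notin\mathcal{I}(Y)$ and $\beta_i+1\le n+1$. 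Thus $a_k$ exists iff $k\in\mathcal{I}(Y)$, $k+1\notin\mathcal{I}(Y)$ and $k+1\le n+1$; for $k\in[1,n]$ the last condition is automatic, so $a_k$ exists iff $k\in\mathcal{I}(Y)$ and $k+1\notin\mathcal{I}(Y)$. Dually, deleting the cell $(i,\boldsymbol{k}_i)$, of color $\beta_i-1$, yields a Young diagram exactly when $\beta_i-1\ge\beta_{i+1}+1$, equivalently $\beta_i-1\notin\mathcal{I}(Y)$ and $\beta_i-1\ge 1$; so for $k\in[1,n]$, $r_k$ exists iff $k+1\in\mathcal{I}(Y)$ and $k\notin\mathcal{I}(Y)$. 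Feeding the three mutually exclusive and, over $k\in[1,n]$, exhaustive cases ``$k\in\mathcal{I}(Y),\ k+1\notin\mathcal{I}(Y)$'', ``$k\notin\mathcal{I}(Y),\ k+1\in\mathcal{I}(Y)$'', and ``$k,k+1$ both lie in $\mathcal{I}(Y)$ or both lie outside $\mathcal{I}(Y)$'' into the trichotomy above yields assertions (1), (2) and (3) respectively.

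The step I expect to be the main obstacle is the boundary bookkeeping hidden in the phrase ``produces a Young diagram inside the rectangle'': one must check that the constraints coming from the sides of the rectangle (a full top row, an empty bottom row, the cases $i=1$ and $i=m$) are faithfully recorded by the auxiliary inequalities ``$k+1\le n+1$'' and ``$k\ge 1$'', so that these become vacuous precisely for $k\in[1,n]$ and the clean $\mathcal{I}(Y)$-criteria hold with no exceptions — for instance, when row $1$ is already full there is no addable cell there, and indeed then $\beta_1=n+1=k+1\in\mathcal{I}(Y)$, consistently blocking $a_k$. The conventions $\beta_0=n+2$, $\beta_{m+1}=0$ are exactly the device that collapses all of these cases into the two uniform inequalities $\beta_i+1\le\beta_{i-1}-1$ and $\beta_i-1\ge\beta_{i+1}+1$; once this is set up, the rest is routine. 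As a cross-check, one may instead identify $\mathcal{I}(Y)$ with the index set of the weight $g(Y)\in W\Lambda_m$, on which $s_k$ acts by transposing the entries $k$ and $k+1$, and invoke Corollary \ref{action filter,orbit} together with Proposition \ref{iso filter,orbit}; the argument above is the self-contained version, following \cite{Mo}.
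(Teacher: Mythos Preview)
Your argument follows the same line as the paper's: both detect the (at most one) addable or removable cell of color $k$ and translate its existence into membership conditions on the beta-set $\mathcal{I}(Y)$. Your packaging via the conventions $\beta_0=n+2$, $\beta_{m+1}=0$ is slightly more uniform than the paper's case split on $i=1$ versus $i>1$, but the content is the same.

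One step does not work as written. You invoke Lemma~\ref{property f-invo,c-d-comp} to forbid the coexistence of $a_k$ and $r_k$, arguing that otherwise $(A_k(Y)\setminus Y)\cup R_k(Y)=(Y\setminus\{r_k\})\cup\{a_k\}$ would be a filter whose symmetric difference with $Y$ has two elements. But that set is \emph{not} a filter: $a_k$ and $r_k$ lie on the color-$k$ chain with $a_k\notin Y$ and $r_k\in Y$, hence $a_k<r_k$, so the set contains $a_k$ while omitting $r_k>a_k$. In that hypothetical scenario one would get $S_k(Y)=Y$, and Lemma~\ref{property f-invo,c-d-comp} gives no contradiction. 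The repair is already in your hands: your second paragraph shows that $a_k$ exists iff $k\in\mathcal{I}(Y)$, $k+1\notin\mathcal{I}(Y)$, while $r_k$ exists iff $k\notin\mathcal{I}(Y)$, $k+1\in\mathcal{I}(Y)$, and these conditions are visibly incompatible; derive them first and the trichotomy follows. (The paper sidesteps this by citing Corollary~\ref{action filter,orbit} alongside Lemma~\ref{property f-invo,c-d-comp}: since $\lambda$ is minuscule, $g(Y)(h_k)\in\{-1,0,1\}$ forces exactly one of the three cases.)
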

\begin{proof}
Notice that for $Y \in \mathcal{F}(Y_{m,n-m+1})$ and $k \in [1,n]$, $S_k(Y)$ satisfies one of the following (see Lemma \ref{property f-invo,c-d-comp} and Corollary \ref{action filter,orbit}):
\begin{enumerate}
\renewcommand{\labelenumi}{(\roman{enumi})}
\item $S_k(Y)=A_k(Y)=Y\sqcup \{(i,j)\}$ for some $(i,j)\in Y_{m,n-m+1}\setminus Y$.
\item $S_k(Y)=R_k(Y)=Y\setminus \{(i,j)\}$ for some $(i,j)\in Y$.
\item $S_k(Y)=A_k(Y)=R_k(Y)=Y$.
\end{enumerate}

(1) First, we show the ``only if'' part.
Because $k \in \mathcal{I}(Y)$, there exists $i \in [1,m]$ such that $k = \boldsymbol{k}_i + m+1-i$.
Then we see that $(i,\boldsymbol{k}_i) = (i,k-m-1+i) \in Y$ or $\boldsymbol{k}_i = 0$. 
In both cases, we get $(i,k-m+i) \not\in Y$.
Note that $\kappa(i,k-m+i) = k$.
If $i = 1$, then we get $Y\sqcup\{(i,k-m+i)\}\in \mathcal{F}(Y_{m,n-m+1})$ and $S_k(Y) = Y\sqcup\{(i,k-m+i)\} \supset Y$.
If $i > 1$, then $\boldsymbol{k}_{i-1} + m+1-(i-1) > k+1$ by $k+1 \not\in \mathcal{I}(Y)$.
Thus we have $k-m+i \le \boldsymbol{k}_{i-1}$ and $(i-1,k-m+i) \in Y$.
Hence we get $Y\sqcup\{(i,k-m+i)\}\in \mathcal{F}(Y_{m,n-m+1})$ and $S_k(Y) = Y\sqcup\{(i,k-m+i)\} \supset Y$.

Next, we show the ``if'' part.
Because $S_k(Y) \supset Y$, there exists $(i,j) \in S_k(Y)$ such that $S_k(Y) = Y \sqcup\{(i,j)\}$ and $\kappa(i,j) = j-i+m = k$.
Then we see that $(i,j-1)\in Y$ or $j-1 = 0$.
In both cases, we get $\boldsymbol{k}_i = j-1 = i-m+k-1$.
Hence, $k = \boldsymbol{k}_i + m+1-i \in \mathcal{I}(Y)$.
If $i = 1$, then $\max(\mathcal{I}(Y)) = k$, and hence $k+1 \not\in \mathcal{I}(Y)$.
If $i > 1$, then $(i-1,j) \in Y$ and $\boldsymbol{k}_{i-1} \ge j$.
Thus we obtain $\boldsymbol{k}_{i-1} + m+1-(i-1) \ge j + m+1-i+1=k+2$, which implies that $k+1 \not\in \mathcal{I}(Y)$.

(2) Similar to part (1).

(3) Since $S_k(Y)$ satisfies one of (i)-(iii), the assertion is obvious from parts (1) and (2).
\end{proof}

\begin{rema}\label{IandS2}
By Lemma \ref{IandS}, if $k \in \mathcal{I}(Y)$ and $k+1 \not\in \mathcal{I}(Y)$, then $k \not\in \mathcal{I}(S_k(Y))$ and $k+1 \in \mathcal{I}(S_k(Y))$.
Moreover, either $k' \in \mathcal{I}(Y), k' \in \mathcal{I}(S_k(Y))$ or $k' \not\in \mathcal{I}(Y), k' \not\in \mathcal{I}(S_k(Y))$ for $k' \in [1,n+1]$ with $k' \not = k,k+1$.

\end{rema}

For $n \in \mathbb{Z}_{>0}$ and $m \in \mathbb{Z}_{>0}$ such that $1 \le m \le (n+1)/2$, we set $\mathcal{SS}(Y_{m,n-m+1}) := \{Y \in \mathcal{F}(Y_{m,n-m+1}) \mid \mathcal{I}(Y) \cap \overline{\mathcal{I}(Y)} = \emptyset \}$.

\begin{theo}[{cf. \cite[Theorem 1.1]{Mo}}]\label{Fexplicid}
It holds that $\tilde{\mathcal{F}}(Y_{m,n-m+1}) = \mathcal{SS}(Y_{m,n-m+1}).$
\end{theo}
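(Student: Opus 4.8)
The plan is to push everything through the bijection $\mathcal{I}\colon\mathcal{F}(Y_{m,n-m+1})\overset{\sim}{\rightarrow}\binom{[1,n+1]}{m}$ of Definition \ref{partition} and reduce the theorem to a transitivity statement for a hyperoctahedral group action on $m$-element subsets of $[1,n+1]$. First I would record two dictionaries. On the filter side, Lemma \ref{IandS} together with Remark \ref{IandS2} says exactly that $\mathcal{I}(S_k(Y)) = s_k\cdot\mathcal{I}(Y)$ for every $k\in[1,n]$, where $s_k$ is the transposition $(k,k+1)$ acting on subsets of $[1,n+1]$. Iterating this, and using $\tilde{S}_p=\hat{S}_p$ (Lemma \ref{property S fold}), Definition \ref{S fold}, and $f(\lambda)=\emptyset$ (immediate from Corollary \ref{invf}, since $\mu=\lambda$ forces all $c_i=0$), equation (\ref{tilde-F}) gives $\mathcal{I}(\tilde{\mathcal{F}}(P_\lambda))=\Gamma\cdot\{1,\dots,m\}$, where $\{1,\dots,m\}=\mathcal{I}(\emptyset)$ and $\Gamma\subseteq\mathfrak{S}_{n+1}$ is the subgroup generated by the permutations $\pi_p$ ($p\in J$): $\pi_p=s_ks_{n+1-k}$ if $p=\{k,n+1-k\}$ satisfies the orthogonality condition, $\pi_p=s_k$ if $p=\{k\}$ is a singleton orbit, and $\pi_p=s_ms_{m+1}s_m=(m,m+2)$ if $p=\{m,m+1\}$ is the non-orthogonal orbit (so $n=2m$). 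On the other side, writing $w_0$ for the involution $i\mapsto n+2-i$ of $[1,n+1]$, one has $\overline{\mathcal{I}(Y)}=w_0(\mathcal{I}(Y))$, so $\mathcal{SS}(Y_{m,n-m+1})=\mathcal{I}^{-1}(\mathcal{T})$ with $\mathcal{T}:=\{I\in\binom{[1,n+1]}{m}\mid I\cap w_0(I)=\emptyset\}$. Thus the theorem becomes $\Gamma\cdot\{1,\dots,m\}=\mathcal{T}$.

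For the inclusion $\Gamma\cdot\{1,\dots,m\}\subseteq\mathcal{T}$ I would note that each generator $\pi_p$ commutes with $w_0$: since $w_0$ conjugates $s_j$ to $s_{n+1-j}$, it fixes each $s_ks_{n+1-k}$, fixes the central $s_{(n+1)/2}$, and fixes $(m,m+2)$. Hence $\Gamma$ lies in the centralizer of $w_0$, and any $\gamma$ with $\gamma w_0=w_0\gamma$ preserves $\mathcal{T}$, because $\gamma(I)\cap w_0(\gamma(I))=\gamma(I)\cap\gamma(w_0(I))=\gamma(I\cap w_0(I))$. Since $m\le(n+1)/2$ gives $n+2-m>m$, the base point $\{1,\dots,m\}$ lies in $\mathcal{T}$, and therefore so does its entire $\Gamma$-orbit.

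The reverse inclusion $\mathcal{T}\subseteq\Gamma\cdot\{1,\dots,m\}$ is the substantive part. The involution $w_0$ partitions $[1,n+1]$ into the two-element blocks $B_j:=\{j,n+2-j\}$, together with the single fixed point $(n+2)/2$ when $n$ is even; a set $I\in\binom{[1,n+1]}{m}$ lies in $\mathcal{T}$ precisely when it is a transversal, meeting $m$ distinct blocks in one point each (and, when $n$ is even, avoiding the fixed point). One checks that $\pi_{\{k,n+1-k\}}$ acts on the blocks as the adjacent transposition $(B_k,B_{k+1})$, while the remaining generator flips the two points of one block and fixes every other block. Hence $\Gamma$ contains all adjacent block transpositions — so the full symmetric group on the blocks — and one sign change, and conjugating yields every sign change; thus $\Gamma$ maps onto the hyperoctahedral group acting on the blocks, which is transitive on transversals. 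Given $I\in\mathcal{T}$, I would first apply block transpositions to carry the blocks met by $I$ onto $B_1,\dots,B_m$, then flip each block so the chosen point is the smaller one, reaching $\{1,\dots,m\}$; this proves $I\in\Gamma\cdot\{1,\dots,m\}$ and completes the argument. The one step requiring genuine care — the main obstacle — is pinning down this generator-to-block-operation dictionary and the resulting surjection onto the hyperoctahedral group \emph{uniformly} across the singleton-orbit case ($A_{2\ell-1}\to C_\ell$) and the non-orthogonal case ($A_{2m}\to B_m$); once that is settled, the transitivity is classical and the rest is bookkeeping with $\mathcal{I}$.
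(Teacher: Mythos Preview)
Your argument is correct and takes a genuinely different route from the paper's. The paper proves the equality by induction on $\#Y$: for the forward inclusion it takes $Y\in\tilde{\mathcal{F}}$, finds $p$ with $\tilde{S}_p(Y)\subsetneq Y$, applies the inductive hypothesis to $\tilde{S}_p(Y)$, and then runs a case analysis via Lemma~\ref{IandS} to check that $\mathcal{I}(Y)\cap\overline{\mathcal{I}(Y)}=\emptyset$; for the reverse inclusion it takes $Y\in\mathcal{SS}$, finds $k$ with $k\notin\mathcal{I}(Y)$, $k+1\in\mathcal{I}(Y)$, hand-builds a smaller $Y'\in\mathcal{SS}$ (or $Y''$) with $\tilde{S}_p(Y')=Y$, and appeals to induction. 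Your approach instead globalises: via $\mathcal{I}$ you identify $\tilde{\mathcal{F}}$ with a $\Gamma$-orbit in $\binom{[1,n+1]}{m}$ and $\mathcal{SS}$ with the set of $w_0$-transversals, then observe that $\Gamma$ is (the image of) the centralizer of $w_0$ in $\mathfrak{S}_{n+1}$, i.e.\ the hyperoctahedral group on the $w_0$-blocks, which visibly preserves and acts transitively on transversals. This packages the paper's case-by-case induction into a single structural statement; the paper's argument is more elementary and entirely self-contained, while yours explains \emph{why} the two sets coincide and makes the connection to $\hat{W}\cong\tilde{W}$ transparent.

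One notational slip to fix: in the non-orthogonal case the orbit is $\{n/2,\,n/2+1\}$ and $\pi_p=s_{n/2}s_{n/2+1}s_{n/2}=(n/2,\,n/2+2)$, regardless of which $\Lambda_m$ you chose; your writing ``$p=\{m,m+1\}$ (so $n=2m$)'' conflates the weight index $m$ with the half-rank $n/2$. The block analysis you carry out afterwards is correct once this is straightened out (the relevant block is $B_{n/2}=\{n/2,\,n/2+2\}$, and the fixed point $n/2+1$ is untouched by every generator), so the content of the argument is unaffected.
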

\begin{proof}
We will show that $Y \in \tilde{\mathcal{F}}(Y_{m,n-m+1})$ if and only if $Y \in \mathcal{SS}(Y_{m,n-m+1})$ by induction on $\#Y$.
If $\#Y = 0$, then $Y = \emptyset$.
It is obvious that $\emptyset \in \tilde{\mathcal{F}}(Y_{m,n-m+1})$.
Also, because $I(\emptyset) = \{1,2,\ldots,m\}$ and $I(\emptyset) = \{n+1,n,\ldots,n+2-m\}$, with $m < n+2-m$, it follows that $\mathcal{I}(\emptyset) \cap \overline{\mathcal{I}(\emptyset)} = \emptyset$, and hence $\emptyset \in \mathcal{SS}(Y_{m,n-m+1})$.

Assume that $\#Y > 0$.
First, we will show  the ``only if'' part.
Because $Y \not= \emptyset$, there exists $p\in J$ such that $\tilde{S}_p(Y) \subset Y$.
Since $Y \in \tilde{\mathcal{F}}(Y_{m,n-m+1})$, we have $\tilde{S}_p(Y) \in \tilde{\mathcal{F}}(Y_{m,n-m+1})$.
By the induction hypothesis, it follows that $\tilde{S}_p(Y) \in \mathcal{SS}(Y_{m,n-m+1})$.
Here, we give a proof only for the case that $\#p = 2$; the proof for the case that $\#p = 1$ is similar (and simpler).
Assume that $p$ satisfies the orthogonality condition.
We write $p$ as: $p = \{i,n+1-i\}$ with $i\not=n+1-i$.
By Lemma \ref{sign coincide}, $\tilde{S}_p(Y)$ satisfies one of the following:
\begin{enumerate}
\renewcommand{\labelenumi}{(\roman{enumi})}
\item $\tilde{S}_p(Y) \subset S_i\tilde{S}_p(Y), \tilde{S}_p(Y) = S_{n+1-i}\tilde{S}_p(Y)$.
\item $\tilde{S}_p(Y) = S_i\tilde{S}_p(Y), \tilde{S}_p(Y) \subset S_{n+1-i}\tilde{S}_p(Y)$.
\item $\tilde{S}_p(Y) \subset S_i\tilde{S}_p(Y), \tilde{S}_p(Y) \subset S_{n+1-i}\tilde{S}_p(Y)$.
\end{enumerate}
We see by Lemma \ref{IandS} that (i) (resp., (ii), (iii)) holds if and only if the following (i)' (resp., (ii)', (iii)') holds:
\begin{enumerate}
\renewcommand{\labelenumi}{(\roman{enumi})'}
\item $i\in\mathcal{I}(\tilde{S}_p(Y)), i+1\not\in\mathcal{I}(\tilde{S}_p(Y)), n+1-i\not\in\mathcal{I}(\tilde{S}_p(Y)), n+2-i\not\in\mathcal{I}(\tilde{S}_p(Y))$.
\item $i\not\in\mathcal{I}(\tilde{S}_p(Y)), i+1\not\in\mathcal{I}(\tilde{S}_p(Y)), n+1-i\in\mathcal{I}(\tilde{S}_p(Y)), n+2-i\not\in\mathcal{I}(\tilde{S}_p(Y))$.
\item $i\in\mathcal{I}(\tilde{S}_p(Y)), i+1\not\in\mathcal{I}(\tilde{S}_p(Y)), n+1-i\in\mathcal{I}(\tilde{S}_p(Y)), n+2-i\not\in\mathcal{I}(\tilde{S}_p(Y))$.
\end{enumerate}
Moreover, it can be easily checked that (i)' (resp., (ii)', (iii)') holds if and only if the following (i)'' (resp., (ii)'', (iii)'') holds:
\begin{enumerate}
\renewcommand{\labelenumi}{(\roman{enumi})''}
\item $i\not\in\mathcal{I}(Y), i+1\in\mathcal{I}(Y), n+1-i\not\in\mathcal{I}(Y), n+2-i\not\in\mathcal{I}(Y)$,
\item $i\not\in\mathcal{I}(Y), i+1\not\in\mathcal{I}(Y), n+1-i\not\in\mathcal{I}(Y), n+2-i\in\mathcal{I}(Y)$,
\item $i\not\in\mathcal{I}(Y), i+1\in\mathcal{I}(Y), n+1-i\not\in\mathcal{I}(Y), n+2-i\in\mathcal{I}(Y)$.
\end{enumerate}
By Remark \ref{IandS2}, we obtain $Y \in \mathcal{SS}(Y_{m,n-m+1})$ for any cases.
Assume that $p$ does not satisfy the orthogonality condition; in this case, $n$ is even, and $p = \{i,i+1\}$ with $i = n/2$.
By Lemmas \ref{sign coincide} and \ref{IandS}, $\tilde{S}_p(Y)$ satisfies $i\in\mathcal{I}(\tilde{S}_p(Y)), i+1\not\in\mathcal{I}(\tilde{S}_p(Y))$, and $ i+2\not\in\mathcal{I}(\tilde{S}_p(Y))$.
Also, $Y$ satisfies $i\not\in\mathcal{I}(Y), i+1\not\in\mathcal{I}(Y)$, and $i+2\in\mathcal{I}(Y)$.
Thus we obtain $Y \in \mathcal{SS}(Y_{m,n-m+1})$, as desired.

Next, we will show  the ``if'' part.
Because $Y \not= \emptyset$, there exists $k \in [1,n]$ such that $k \not\in \mathcal{I}(Y)$ and $k+1 \in \mathcal{I}(Y)$; we set $p := \{k, n+1-k\} \in J$.
Let $Y' \in \mathcal{F}(Y_{m,n-m+1})$ be such that $\mathcal{I}(Y') = \mathcal{I}(Y)\sqcup\{k\}\setminus\{k+1\}$; note that $\#Y' = \#Y -1$.
Assume that $n+2-k \not\in \mathcal{I}(Y')$.
By Remark \ref{IandS2}, we have $Y' \in \mathcal{SS}(Y_{m,n-m+1})$.
By the induction hypothesis, it follows that $Y' \in \tilde{\mathcal{F}}(Y_{m,n-m+1})$.
Notice that $n/2+1 \not\in \mathcal{I}(Y)$, because $n+2-(n/2+1) = n/2+1$.
Because $k+1\in \mathcal{I}(Y)$, we have $n+1-k \not = k+1$.
Also, because $k \in \mathcal{I}(Y')$ and $n+2-k \not\in \mathcal{I}(Y')$, we have $k \not= n+2-k$, and hence $n+1-k \not= k-1$.
Thus, $p$ satisfies the orthogonality condition.
If $\#p = 1$, then $p = \{k\}$, and $\tilde{S}_p(Y') = S_k(Y') = Y$ by Lemma \ref{IandS}. 
If $\#p = 2$, then $k \not= n+1-k$ and $\{k,k+1\} \cap \{n+1-k, n+2-k\} = \emptyset$, which implies that $n+2-k \not\in \mathcal{I}(Y)$ by Remark \ref{IandS2}, and $n+1-k \not\in \mathcal{I}(Y)$ by Lemmas \ref{sign coincide} and \ref{IandS}.
Hence we have $\tilde{S}_p(Y') = S_{n+1-k}S_k(Y') = S_{n+1-k}(Y) = Y$. 
In both cases, we obtain $Y \in \tilde{\mathcal{F}}(Y_{m,n-m+1})$.
Assume that $n+2-k \in \mathcal{I}(Y')$.
Let $Y'' \in \mathcal{F}(Y_{m,n-m+1})$ be such that $\mathcal{I}(Y'') = \mathcal{I}(Y')\sqcup\{n+1-k\}\setminus\{n+2-k\}$; note that $\#Y'' = \#Y' -1$.
Because $n+2-(n+1-k) = k+1 \not\in \mathcal{I}(Y)$, we have $Y'' \in \mathcal{SS}(Y_{m,n-m+1})$.
By the induction hypothesis, it follows that $Y'' \in \tilde{\mathcal{F}}(Y_{m,n-m+1})$.
Because $\#Y'' = \#Y -2$, we have $\#p = 2$.
We see by Lemmas \ref{sign coincide} and \ref{IandS} that if $p$ satisfies the orthogonality condition, then $\tilde{S}_p(Y'') = S_kS_{n+1-k}(Y'') = S_k(Y') = Y$. 
If $p$ does not satisfy the orthogonality condition, then $n+1-k = k-1$.
Thus we obtain $k-1 \in \mathcal{I}(Y'')$, $k, k+1 \not\in \mathcal{I}(Y'')$, and hence $\tilde{S}_p(Y'') = S_{k-1}S_kS_{k-1}(Y'') = S_{k-1}Y = Y$. 
In both cases, we obtain $Y \in \tilde{\mathcal{F}}(Y_{m,n-m+1})$, as desired.
\end{proof}

%%%%%%%%%%%%%%%%%%%%%%%%%%%%%%%%%%%%%%%%%%%%%%%%%%%%%%%%%%%%%%%%%%%%%%%%%%%%%%%%%%%%%%%%%%%%%%%%%%%%
%%%%%%%%%%%%%%%%%%%%%%%%%%%%%%%%%%%%%%%%%%%%%%%%%%%%%%%%%%%%%%%%%%%%%%%%%%%%%%%%%%%%%%%%%%%%%%%%%%%%

%%%%%%%%%%%%%%%%%%%%%%%%%%%%%%%%%%%%%%%%%%%%%%%%%%%%%%%%%%%%%%%%%%%%%%%%%%%%%%%%%%%%%%%%%%%%%%%%%%%%
%%%%%%%%%%%%%%%%%%%%%%%%%%%%%%%%%%%%%%%%%%%%%%%%%%%%%%%%%%%%%%%%%%%%%%%%%%%%%%%%%%%%%%%%%%%%%%%%%%%%

\end{document}